\newtheorem{theorem}{Theorem}[section]
\newtheorem{lemma}[theorem]{Lemma}
\newtheorem{proposition}[theorem]{Proposition}
\newtheorem{corollary}[theorem]{Corollary}
\theoremstyle{definition}
\theoremstyle{remark}
\newtheorem{remark}[theorem]{Remark}
\numberwithin{equation}{section}
\newtheorem{AlgDef}{Algorithm}[section]
\newcommand{\K}{\mathcal{U}}
\newcommand{\dd}{\mathrm{d}}
\newcommand{\Proj}{\mathcal{S}}
\newcommand{\jhatbold}{\boldsymbol{\hat{\textbf{\j}}}}
\newcommand{\shatbold}{\boldsymbol{\hat{\textbf{s}}}}
\newcommand{\lhatbold}{\boldsymbol{\hat{\textbf{l}}}}
\newcommand{\jhat}{\hat{{\j}}}
\DeclareMathOperator{\spn}{span}
\begin{document}

\title{On the approximation of Koopman spectra for measure preserving transformations}

%    Information for first author
\author{N. Govindarajan}
%    Address of record for the research reported here
\address{Dept. of Mechanical Eng., Univerity of California at Santa Barbara, Santa Barbara, CA 93106}
\email{ngovindarajan@engineering.ucsb.edu}
%    \thanks will become a 1st page footnote.
\thanks{The authors gratefully acknowledge support from the Army Research Office (ARO) through grant W911NF- 11-1-0511 under the direction of program manager Dr. Samuel Stanton.}

%    Information for second author
\author{R. Mohr}
\address{Dept of Mechanical Eng., Univerity of California at Santa Barbara, Santa Barbara, CA 93106}
\email{mohrrm@engineering.ucsb.edu}

%    Information for third author
\author{S. Chandrasekaran}
\address{Dept. of Electrical and Computer Eng., University of California at Santa Barbara, Santa Barbara, CA 93106}
\email{shiv@ece.ucsb.edu}

%    Information for fourth author
\author{I. Mezi\'{c}}
\address{Dept. of Mechanical Eng., Univerity of California at Santa Barbara, Santa Barbara, CA 93106}
\email{mezic@engineering.ucsb.edu}

%    General info
\subjclass[2010]{47A58, 37M25}

\date{\today.}

\keywords{Koopman operator, measure-preserving automorphisms, periodic approximations, unitary operators on Hilbert space, spectral measure.}

\begin{abstract}
For the class of continuous, measure-preserving automorphisms on compact metric spaces, a procedure is proposed for constructing a sequence of finite-dimensional approximations to the associated Koopman operator on a Hilbert space. These finite-dimensional approximations are obtained from the so-called ``periodic approximation'' of the underlying automorphism and take the form of permutation operators. Results are established on how these discretizations approximate the Koopman operator spectrally. Specificaly, it is shown that both the spectral measure and the spectral projectors of these permutation operators converge weakly to their infinite dimensional counterparts. Based on this result, a numerical method is derived for computing the spectra of volume-preserving maps on the unit $m$-torus. The discretized Koopman operator can be constructed from solving a bipartite matching problem with $\mathcal{O}(\tilde{n}^{3m/2})$ time-complexity, where $\tilde{n}$ denotes the gridsize on each dimension. By exploiting the permutation structure of the discretized Koopman operator, it is further shown that the projections and density functions are computable in $\mathcal{O}( m \tilde{n}^{m} \log \tilde{n})$ operations using the FFT algorithm. Our method is illustrated on several classical examples of automorphisms on the torus that contain either a discrete, continuous, or a mixed spectra. In addition, the spectral properties of the Chirikov standard map are examined using our method.
\end{abstract}

\maketitle

\section{Introduction}
The state-space of a measure-preserving dynamical system may be inspected through an analysis of the spectral properties of the associated Koopman operator on a Hilbert space. Within the applied dynamical systems community, a lot of interest has arisen recently in the computation of spectral properties of the Koopman operator (see the review article \cite{Budisic2012}). On the attractor, these spectral properties are directly related to the statistical properties of a measure-preserving dynamical system. Instead of the individual trajectories, it is exactly these properties that are relevant when it comes down to the comparison of complex dynamics \cite{Mezic2004}. From an engineering and modeling perspective, one would like to exploit this fact to obtain reduced order models. It was pointed out in \cite{Mezic2005} that for measure-preserving dynamical systems, the Koopman operator often contains a mixed spectra. The discrete part of the spectrum correspods to the almost periodic part of the process, whereas the continuous spectrum typically is associated with either chaotic or shear dynamics. In a model reduction application, it would be desirable to use the almost periodic component of the observable dynamics as a reduced order model, while correctly accounting for the remainder contributions with the appropriate noise terms.

A prereqisuite to the development of Koopman-based reduced order models are numerical methods which are capable of approximating the spectral decomposition of observables. A lot of work has already been dedicated on this subject. Apart from taking direct harmonic averages \cite{mezic1999method,levnajic2010ergodic,levnajic2015ergodic,mauroy2012use}, most methods rely mostly on the Dynamic Mode Decomposition (DMD) algorithm \cite{rowley2009spectral,schmid2010dynamic}. Techniques that fall within this class are Extended-DMD \cite{williams2015data,korda2017convergence} as well as Hankel-DMD \cite{arbabi2016ergodic}. Certain spectral convergence results were established for the algorithms. In \cite{arbabi2016ergodic}, it was shown that Hankel-DMD converges for systems with ergodic attractors. On the other hand,  \cite{korda2017convergence} proved weak-convergence of the eigenfunctions for Extended-DMD. Nevertheless, little is known on how these methods deal with continuous spectrum part of the system. 

Numerically, computation of continuous spectra poses a challenge since it involves taking spectral projections of observables onto sets in the complex-plane which are not singleton. One approach is to directly approximate the moments of the spectral measure, and use the Christoffel-Darboux kernel to distinguish the atomic and continuous parts of the spectrum. This route was advocated in \cite{korda2017data}. In this paper, we present an alternative viewpoint to the problem which directly takes aim at a specific discretization of the unitary Koopman operator. Our method relies on the concept of \emph{``periodic approximation'''} which originally was introduced by Halmos \cite{Halmos1944} and later by Lax \cite{Lax1971}. The idea is similar to the so-called ``Ulam approximation'' of the Perron-Frobenius operator \cite{ulam1960collection,li1976finite,ding1996finite}, but instead of approximating the measure-preserving dynamics by Markov chains (see also \cite{dellnitz1999approximation,dellnitz2001algorithms}), one explicitly enforces a bijection on the state-space partition so that the unitary structure of the operator is preserved. In seminal work conducted by Katok and Stepin \cite{Katok1967}, it was already shown how these type of approximations are related to certain qualitative properties of the spectra. In this paper, we show how this technique can be used to obtain an actual approximation of the spectral decomposition of the operator. 

We point out that the spectrum is approximated only in a weak sense. Indeed, the spectral type of a dynamical system is sensitive to arbitrarily small perturbations, making numerical approximation of the actual operator spectra very challenging. Take for example the map:
$$ T_{\epsilon}(x) = (x_1, x_2 + \omega + \epsilon \sin (2 \pi x_1)),$$
defined on the unit torus. $T_{\epsilon}$  has a fully discrete spectrum at $\epsilon=0$, but an absolutely continuous spectrum \cite{mezic2017koopman} when $\epsilon \ne 0$ . Even though the spectral type changed instanteously, the observable dynamics will not differ very much at finite time-scales under arbitrarily small perturbations. This small change in dynamics is reflected by only small perturbations in the spectral measure of the obervable.

\subsection{Contributions} The main contribution of this paper is a rigorous discretization of the Koopman operator  such that its spectral properties are approximated weakly in the limit. It is shown that despite the difficulties of not knowing the smoothness properties of the underlying spectral projectors, control on the error may still be achieved in an average sense. Our discretization gives rise to a convergent numerical method which is capable of computing the spectral decomposition of the unitary Koopman operator. 

We describe how our numerical method can be implemented for volume preserving maps on the $m$-torus. The construction of periodic approximations for such kind of maps is a relatively straightforward task and it has been well studied before in the literature under the phrase of ``lattice maps'' \cite{diamond1993numerical, kloeden1997constructing,earn1992exact,rannou1974numerical}. In our paper, we show that if the map is Lipschitz continuous, a periodic approximation can always be obtained from solving a bipartite matching problem of $\mathcal{O}(\tilde{n}^{3m/2})$ time-complexity, where $\tilde{n}$ denotes the size of the grid in each dimension. We further show that, once a periodic approximation is achieved, the spectral projections and density plots are computable in $\mathcal{O}(m\tilde{n}^{m} \log \tilde{n})$ operations. Here, we heavily exploit the permutation structure of the discretized operator by making use of the FFT algorithm. 
To illustrate our methodology, we will compute the spectra of some canonical examples which either have a discrete, continuous or mixed spectra. In addition, we analyze the spectral properties of the Chirikov Standard map \cite{Chirikov1979} with our method.

\subsection{Paper outline} In order to keep the presentation organized, the paper is split in two parts. In the first part, we cover the theoretical underpinnings of our proposed method by proving spectral convergence for a large class of measure preserving transformations.
In the second part, we proceed by applying this general methodology  to volume preserving maps on the $m$-torus. Numerical examples are provided. 

Section 2 describes the outline of the proposed discretization of the Koopman operator. Section 3 elaborates on the notion of periodic approximation. In \cref{sec:opconvergence} we prove operator convergence. Section 6 proves the main spectral results. In \cref{sec:numericalmethod}, details of the numerical method are presented. In \cref{sec:examples}, the performance of the numerical method is analyzed on some canonical examples of Lebesgue measure-preserving transformations on the torus. In \cref{sec:chirikov} the spectra of the Chirikov standard map is analyzed. Conclusions and future directions of our work are found in \cref{sec:conclusions}. 

\part{Theory: A finite-dimensional approximation of the Koopman operator with convergent spectral properties}

In the first part of the paper, we elaborate on the theory behind the discretization of the Koopman operator using periodic approximations. The theory is discussed under a very general setting. Since the notion of periodic approximation is not  as well known as the Ulam approximation in the applied dynamical systems community, we provide some background in places where the more experienced reader would easily stroll through. The main spectral results which are relevant to the numerical computations are found in \cref{sec:spectralconv}.

\section{Problem formulation} \label{sec:probdescription}
Let $T: X \mapsto X$ be a self-map on the compact, norm-induced metric space $X\subseteq \mathbb{R}^m$. Associate with $X$ the measure space $( X, \mathcal{M}, \mu )$, where $\mathcal{M}$ denotes the Borel sigma-algebra and $\mu$ is an absolutely continuous measure with its support  equaling the state-space, i.e. $\mathrm{supp}\mbox{ }\mu =  X$. The map $T$ is assumed to be an invertible measure-preserving transformation such that $\mu(B) = \mu(T(B)) = \mu(T^{-1}(B))$ for every $B\in\mathcal{M}$. In this paper, we refer to such maps as measure-preserving \emph{automorphisms}.  

The Koopman linearization \cite{Koopman1931} of an automorphisms is performed as follows. Let:
$$ L^2(X, \mathcal{M}, \mu) :=  \left\{ g:X\mapsto \mathbb{C}  \quad | \quad  \left\| g \right\| < \infty \right\}, \qquad \left\| g \right\| := \left(\int_{X} |g(x)|^2 \dd\mu(x) \right)^{\frac{1}{2}}  $$ 
denote the space of square-integrable functions on $X$ with respect to the invariant measure $\mu$. The \emph{Koopman operator} $\K: L^2(X, \mathcal{M}, \mu) \mapsto  L^2(X, \mathcal{M}, \mu)$ is defined as the composition:
\begin{equation}
(\K g)(x) :=    g \circ T(x) \label{eq:KOOPMAN}
\end{equation}
where $g \in L^2(X, \mathcal{M}, \mu)$ is referred to as the \emph{observable}. The Koopman operator is  \emph{unitary} (i.e. $\K^*  = \K^{-1}$), and hence, from the integral form of the spectral theorem \cite{Akhiezer1963,MacLane1971} it follows that the evolution of an observable under \eqref{eq:KOOPMAN} can be decomposed as:
\begin{equation} 
\K^k g = \int_{-\pi}^{\pi} e^{i \theta k} \dd\Proj({\theta}) g, \qquad k\in \mathbb{Z}.  \label{eq:decomposition}
\end{equation}
Here, $\Proj({\theta})$ denotes a self-adjoint, \emph{projection-valued measure} on the Borel sigma-algebra $\mathcal{B}([-\pi,\pi))$ of the circle parameterized by $\theta\in[-\pi,\pi)$. The {projection-valued measure} satifies the following properties:
\begin{enumerate}[(i)]
	\item For every $D \in \mathcal{B}([-\pi,\pi))$, 
	$$ \Proj_D := \int_{D} \dd\Proj({\theta})$$
	is an orthogonal projector on  $L^2(X, \mathcal{M}, \mu)$.
	\item $\Proj_D = 0$ if $D=\emptyset$ and  $\Proj_D = I$ if $D= [-\pi,\pi)$.
	\item If $D_1, D_2 \in \mathcal{B}([-\pi,\pi)) $ and $D_1 \cap D_2 = \emptyset$, then
	$$ \left\langle \Proj_{D_1} g   ,  \Proj_{D_2} h \right\rangle := \int_X \left(\Proj_{D_1} g\right)^*(x)  \left(\Proj_{D_2} h\right)(x)   \dd\mu(x) =  0$$
	for every $g,h\in L^2(X, \mathcal{M}, \mu)$.
	\item If $\left\{ D_k \right\}^{\infty}_{k=1}$ is a sequence of pairwise disjoint sets in $\mathcal{B}([-pi,\pi))$, then
	$$ \lim_{m\rightarrow\infty} \sum_{k=1}^m  \Proj_{D_k} g =  \Proj_{D} g, \qquad D:= \bigcup_{k=1}^{\infty}  D_k   $$
	for every $g\in L^2(X, \mathcal{M}, \mu)$.
\end{enumerate}

Numerically, our objective is to compute two quantities. Firstly, we wish to obtain a finite-dimensional approximation to the \emph{spectral projection}:
	\begin{equation}
	\Proj_D g = \int_{D} \dd\Proj({\theta}) g   \label{eq:target}   
	\end{equation}
	for some given observable $g\in L^2(X,\mathcal{M},\mu)$ and  interval $D \subset [-\pi,\pi)$. Secondly, we wish to obtain an  approximation to the \emph{spectral density function}. Let $\mathcal{D}([-\pi,\pi))$ denote the space of smooth test functions on the circle and $\mathcal{D}^{*}([-\pi,\pi))$ the dual space of distributions.  The {spectral density function} $\rho(\theta;g)\in \mathcal{D}^{*}([-\pi,\pi))$ is defined as the distributional derivative: 
	\begin{equation}
	\int_{-\pi}^{\pi} \varphi'(\theta) c(\theta; g)  \dd\theta =  - \int_{-\pi}^{\pi} \varphi(\theta) \rho(\theta;g)  \dd\theta, \quad \varphi(\theta)\in\mathcal{D}([-\pi,\pi)),    \label{eq:spectraldensity}
	\end{equation}
	of the so-called spectral cumulative function on $[-\pi,\pi)$:
	$$ c(\theta; g) :=  \langle \Proj_{[-\pi,\theta)} g , g \rangle.$$
	
The approximation of the spectral projections and density functions play a critical role in the applications of model reduction \cite{Mezic2004, Mezic2005}. Indeed, on one hand, a low order model is directly obtained from the spectral projections by selecting $D$ to be a finite union of disjoint intervals which contain a large percentage of the spectral mass. On the other hand, the spectral density functions directly tell us where most of the mass is concentrated for a given observable.	

\section{The proposed discretization of the Koopman operator} \label{sec:mainresult}
In this section, we introduce our proposed discretization of the Koopman operator.

\subsection{Why permutation operators?}
We motivate our choice of using permutation operators as a means to approximate \eqref{eq:KOOPMAN} as follows.  Just like the Koopman operator, permutation operators are unitary, and therefore, its spectrum is contained on the unit circle. But the Koopman operator also satisfies the following properties:
\begin{enumerate}[(i)]
	\item $\K (fg) = (\K f) (\K g)$. 
	\item $\K$ is a positive operator, i.e.  $\K g > 0$ whenever $g>0$.
	\item The constant function, i.e. $g(x) = 1$ for every $x\in X$, is an invariant of the operator.
\end{enumerate}
Permutation operators are (finite-dimensional) operators which satisfy the aforementioned properties as well. Overall, they seem to be a natural choice to approximate \eqref{eq:KOOPMAN}. The upcoming results will justify this claim even further.

\subsection{The discretization procedure} \label{sec:periodicapprox} 
The following discretization of the Koopman operator is proposed. Consider \emph{any} sequence of measurable partitions  $\left\{\mathcal{P}_n\right\}^{\infty}_{n=1}$, where  $\mathcal{P}_n := \left\{ p_{n,1}, p_{n,2}, \ldots,  p_{n,q(n)} \right\}$ such that:
\begin{enumerate}[(i)]
	\item Every partition element $p_{n,j}$ is compact, connected, and of equal measure, i.e.
	\begin{equation}\mu(p_{n,j}) = \frac{\mu(X)}{q(n)}, \qquad j\in\left\{1,2,\ldots,q(n)\right\} \label{eq:equalsized} \end{equation}
	where $q:\mathbb{N} \mapsto \mathbb{N}$ is a strictly, monotonically increasing function. Asking for compactness, one gets that the partition elements intersect. By \eqref{eq:equalsized},  it must follow that these intersections are of zero measure.
	\item The diameters of the partition elements are bounded by 
	\begin{equation} \mathrm{diam}(p_{n,j}):= \sup_{x,y\in p_{n,j}} d( x, y) \leq l(n)  \label{eq:partitioncond}
	\end{equation}
	where $l:\mathbb{N} \mapsto \mathbb{R}$ is a positive, monotonic function decaying to zero in the limit.
	\item $\mathcal{P}_n$ is a refinement of $\mathcal{P}_m$ whenever $n>m$. That is, every $p_{m,j} \in \mathcal{P}_m$ is the union of some partition elements in $\mathcal{P}_n$.
\end{enumerate}
\begin{remark}
	Such a construction is possible since the invariant measure $\mu$ is absolutely continuous with respect to the Lebesgue measure and  $\mathrm{supp}\mbox{ }\mu =  X$. If $\mu$ were to have singular components and/or $\mathrm{supp}\mbox{ }\mu \subset X$, then one would run into trouble satisfying conditions (i) and (ii).
\end{remark}
The main idea set forth in this paper is to project observables $g\in L^2(X,\mathcal{M},\mu)$ onto a finite-dimensional subspace of indicator functions,
$$  L^2_n(X, \mathcal{M}, \mu) :=  \left\{ g_n : X\mapsto \mathbb{C}  \quad | \quad  \sum_{j=1}^{q(n)} c_j  \chi_{p_{n,j}}(x), \quad c_j\in\mathbb{C}   \right\},  \qquad  \chi_{p_{n,j}}(x)  = \begin{cases} 1 & x \in p_{n,j}  \\ 0  & x \notin p_{n,j} \end{cases} $$
by means of a smoothing/averaging operation:
\begin{equation} 
(\mathcal{W}_n g)(x) = g_{n}(x) := \sum_{j=1}^{q(n)} g_{n,j}\chi_{p_{n,j}}(x), \qquad g_{n,j} = \frac{q(n)}{\mu(X)}\int_{X} g(x) \chi_{p_{n,j}}(x) \dd\mu(x) \label{eq:averageoperator}
\end{equation}
and then replace \eqref{eq:KOOPMAN} by its discrete analogue $\K_n:  L^2_n(X, \mathcal{M}, \mu) \mapsto L^2_n(X, \mathcal{M}, \mu)$ given by
\begin{equation} 
\left(\K_n g_n\right)(x) := \sum_{j=1}^{q(n)} g_{n,j} \chi_{T^{-1}_n(p_{n,j})}(x)  \label{eq:discreteoperator}
\end{equation}
where $T_n: \mathcal{P}_n \mapsto \mathcal{P}_n$ is a \emph{discrete} map on the partition.

The map $T_n$ is chosen such that it ``mimics'' the dynamics of the continuous map $T$. We impose the condition  that $T_n$ is a bijection. By doing so, we obtain a \emph{periodic approximation} of the dynamics. Since every partition element is of equal measure \eqref{eq:equalsized} and since $T_n$ is a bijection, the resulting discretization is regarded as one which preserves the measure-preserving properties of the original map on the subsigma algebra generated by $\mathcal{P}_n$, i.e.  $\mu(T^{-1} (p_{n,j})) = \mu(p_{n,j}) = \mu(T^{-1}_n (p_{n,j}))$. 

The discrete operators $\left\{\K_n \right\}^{\infty}_{n=1}$ are isomorphic to a sequence of finite-dimensional permutation operators. The spectra for these operators simplify to a pure point spectrum, where the eigenvalues correspond to roots of unity. Let $v_{n,k}= \sum (v_{n,k})_j  \chi_{p_{n,j}} \in L^2_n(X, \mathcal{M}, \mu)$ denote a normalized eigenvector, i.e.
$$ \K_n v_{n,k} = e^{i\theta_{n, k}} v_{n,k}, \qquad  \left\| v_{n,k} \right\| = 1. $$
The spectral decomposition can be expressed as:
\begin{equation} \K_n g_n = \sum^{q(n)}_{k=1}  e^{i\theta_{n,k}} \Proj_{n} (\theta_{n, k}) g_n \label{eq:eigdecomp}
\end{equation}
where $\Proj_{n} (\theta_{n, k}): L^2_n(X,\mathcal{M},\mu) \mapsto  L^2_n(X,\mathcal{M},\mu)$ denotes the rank-1 self-adjoint projector:
\begin{equation}  \Proj_{n} (\theta_{n, k}) g_n =  v_{n,k} \left\langle v_{n,k}, g_n  
\right\rangle  =  v_{n,k} \left(\int_X v^*_{n,k}(x) g_n(x) \dd \mu \right) =  v_{n,k} \left(  \frac{\mu(X)}{q(n)} \sum_{j=1}^{q(n)}   v^*_{n,kj} g_{n,j}\right).
\end{equation}
The discrete analogue to the spectral projection \eqref{eq:target} may then be defined as:
\begin{equation}
\Proj_{n, D} g_n =  \displaystyle \sum_{\theta_{n,k} \in D}  \Proj_{n} (\theta_{n, k}) g_n \label{eq:targetapprx}
\end{equation}
In addition, the discrete analogue to the spectral density function turns out to be:
\begin{equation}   
\rho_n(\theta;g_n) =  \sum_{k=1}^{q(n)} \left\| \Proj_{n} (\theta_{n, k}) g_n \right\|^2 \delta(\theta - \theta_{n,k}). \label{eq:discrete}
\end{equation}

\clearpage
\subsection{Overview} 
An overview of the discretization process is given in \cref{fig:discretization}.
Our goal is to see whether it is possible to construct a sequence of periodic approximation $\left\{T_n: \mathcal{P}_n \mapsto \mathcal{P}_n \right\}^{\infty}_{n=1}$ such that the associated discrete operators \eqref{eq:discreteoperator} and \eqref{eq:targetapprx} converge to their infinite dimensional counterparts in some meaningful sense.

\begin{figure}[h!]
	\begin{center}
		\includegraphics[width=.95\textwidth]{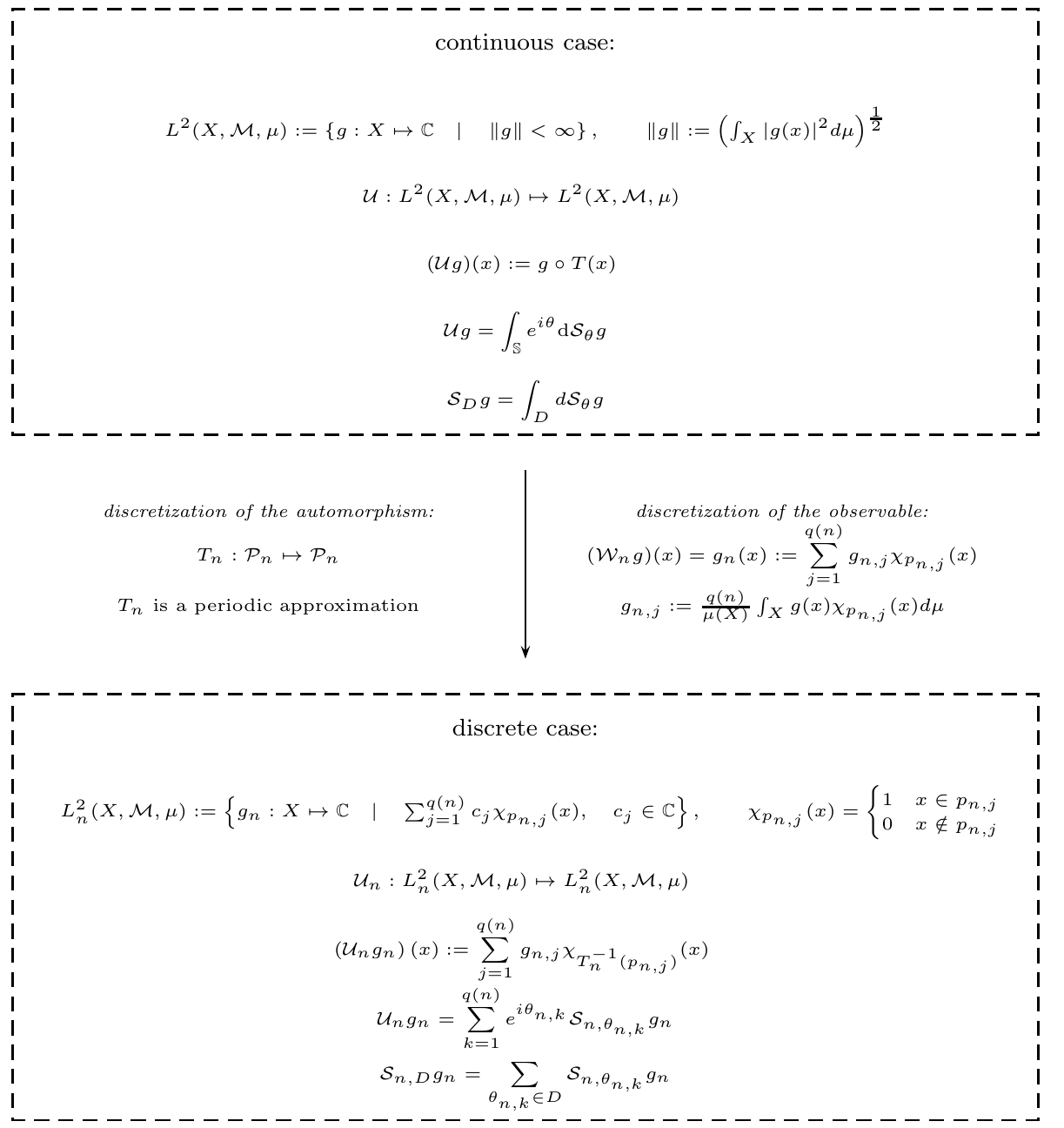} \\
	\end{center} 
	\caption{An overview of the discretization process.} \label{fig:discretization}
\end{figure}

\section{Periodic approximations} \label{sec:periodicapprx}

The notion of periodic approximation of a measure-preserving map is not new \cite{Halmos1944,Halmos1944a,Lax1971, Oxtboby2016,Katok1967,Rokhlin1949,Schwartzbauer1972}. It is well-known that the set of measure-preserving automorphisms, which form a group under composition, are densely filled by periodic transformations under various topologies (see e.g. \cite{Halmos1944}). These properties were useful in proving claims such as whether automorphisms are ``generically'' ergodic or mixing. Katok and Stepin \cite{Katok1967} further showed that the rate at which an automorphism admits a periodic approximation can be directly related to properties of ergodicity, mixing, and entropy. Certain spectral results were also proven in this context. For example, it was shown that if $T$ admits a cyclic approximation by periodic transformations at a certain rate, then the spectrum of \eqref{eq:KOOPMAN} is simple. The motivations of these earlier work was different than ours, given that the focus was more on characterizing global properties of automorphisms, rather than formulating a numerical method to approximate its spectral decomposition. Here, the focus will be on the latter, and with that goal in mind, we will prove the following result.

\begin{theorem}[Existence of a periodic approximation] \label{thm:important}
	Let $T:X\mapsto X$ be a continuous, invertible, measure-preserving automorphism with the invariant measure $\mu$ absolutely continuous w.r.t. the Lebesgue measure and $\mathrm{supp}(\mu) = X$. If $\left\{\mathcal{P}_n\right\}^{\infty}_{n=1}$ is a sequence of measurable partitions on $X$ which are refinements and satisfy the conditions \eqref{eq:equalsized} and \eqref{eq:partitioncond}, then there exists a sequence of bijective maps $\left\{T_n: \mathcal{P}_n \mapsto \mathcal{P}_n \right\}^{\infty}_{n=1}$ that periodically approximates $T$ in an asymptotic sense. More specifically, for every fixed $k\in \mathbb{N}$ and compact set $A\in \mathcal{M}$:
	\begin{equation} 
	\lim_{n \rightarrow \infty} \sum^{k}_{l = -k}  d_H(T^l(A), T^l_n(A_n))  = 0 \label{eq:setconvergence}
	\end{equation}
	where $d_H(A,B) := \max\{  \sup_{a\in A} \inf_{b\in B} d(a,b),  \sup_{b\in B} \inf_{a\in A} d(a,b)   \}$ denotes the Haussdorf metric, and 
	$$
	A_n := \bigcup_{p\in \mathcal{P}_n:\mbox{ } p \cap A \ne \emptyset } p
	$$
	is an over-approximation of $A$ by the partition elements of $\mathcal{P}_n$.
\end{theorem}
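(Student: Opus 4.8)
The plan is to obtain each $T_n$ as a perfect matching in an auxiliary bipartite graph determined by $T$, and then to control how the resulting one-step error accumulates over the finitely many iterates $T^l$, $|l|\le k$.

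\emph{Construction of $T_n$.} For each $n$ I would consider the $q(n)\times q(n)$ matrix $M^{(n)}$ with entries $M^{(n)}_{ij} := \frac{q(n)}{\mu(X)}\,\mu\bigl(T(p_{n,i})\cap p_{n,j}\bigr)$. Each $T(p_{n,i})$ is compact as a continuous image of a compact set, and $\mu(T(p_{n,i})) = \mu(p_{n,i}) = \mu(X)/q(n)$ by measure preservation together with \eqref{eq:equalsized}, so every row of $M^{(n)}$ sums to $1$. Since $\bigcup_i T(p_{n,i}) = T(X) = X$ and all entries are nonnegative, the entries total $q(n)$ while each column sum is at least $1$, hence every column sum equals $1$; thus $M^{(n)}$ is doubly stochastic. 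Hall's marriage theorem applied to the bipartite graph of its strictly positive entries (positivity of all row and column sums is exactly Hall's condition $|N(S)|\ge|S|$), or equivalently the Birkhoff--von~Neumann theorem, then produces a permutation $\sigma_n$ of $\{1,\dots,q(n)\}$ with $M^{(n)}_{i\sigma_n(i)}>0$ for every $i$. I would set $T_n(p_{n,i}) := p_{n,\sigma_n(i)}$. This is a bijection of $\mathcal{P}_n$ that makes no reference to $A$ or $k$, so the single sequence $\{T_n\}$ will serve all choices of $A$ and $k$ at once.

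\emph{One-step estimate and propagation.} Let $\omega$ be a common nondecreasing modulus of uniform continuity for the homeomorphisms $T$ and $T^{-1}$ on the compact space $X$. If $M^{(n)}_{i\sigma_n(i)}>0$, the compact sets $T(p_{n,i})$ and $p_{n,\sigma_n(i)}$ meet in positive measure, hence meet; since $\mathrm{diam}(T(p_{n,i}))\le\omega(l(n))$ and $\mathrm{diam}(p_{n,\sigma_n(i)})\le l(n)$ by \eqref{eq:partitioncond}, this forces $d_H\bigl(T(p_{n,i}),T_n(p_{n,i})\bigr)\le\delta_n := \max\{\omega(l(n)),l(n)\}\to 0$; applying the measure-preserving $T^{-1}$ to the same intersection gives $d_H\bigl(T^{-1}(p),T_n^{-1}(p)\bigr)\le\delta_n$ for every $p\in\mathcal{P}_n$ as well. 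Writing $\eta^{(n)}_l := \max_{p\in\mathcal{P}_n} d_H\bigl(T^l(p),T^l_n(p)\bigr)$ and using $d_H(TK_1,TK_2)\le\omega(d_H(K_1,K_2))$ for compact $K_i$ together with the fact that each $T_n^{l-1}(p)$ is again a single partition element, the triangle inequality yields the recursion $\eta^{(n)}_l \le \omega\bigl(\eta^{(n)}_{l-1}\bigr)+\delta_n$ for $0<l\le k$ (and symmetrically for negative $l$ via $T^{-1}$), with $\eta^{(n)}_0=0$. Since $\omega$ is continuous at $0$ and $\delta_n\to 0$, an induction on $|l|$ shows $\eta^{(n)}_l\to 0$ for each fixed $l$.

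\emph{Conclusion, and the main obstacle.} Because $\mathcal{P}_n$ covers $X$ one has $A\subseteq A_n$ and every point of $A_n$ lies within $l(n)$ of $A$, so $d_H(A,A_n)\le l(n)$ and hence $d_H\bigl(T^l(A),T^l(A_n)\bigr)\le\omega_{T^l}(l(n))\to 0$, where $\omega_{T^l}$ is a modulus of continuity of $T^l$; moreover $T^l(A_n)$ and $T^l_n(A_n)$ are unions over the same finite index set of the blocks $T^l(p_{n,i})$ and $T^l_n(p_{n,i})$, so $d_H\bigl(T^l(A_n),T^l_n(A_n)\bigr)\le\eta^{(n)}_l$. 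Combining the two bounds gives $d_H\bigl(T^l(A),T^l_n(A_n)\bigr)\to 0$ for each $|l|\le k$, and summing the $2k+1$ terms yields \eqref{eq:setconvergence}. The one genuinely delicate point is that the discretization error must not amplify under iteration; this is exactly what the recursion $\eta^{(n)}_l\le\omega(\eta^{(n)}_{l-1})+\delta_n$ controls, and it works precisely because $k$ is held fixed as $n\to\infty$. The existence of the matching $\sigma_n$ and the individual Hausdorff estimates are otherwise routine.
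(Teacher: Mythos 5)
Your proposal is correct and follows essentially the same route as the paper: the permutation $\sigma_n$ obtained from Hall/Birkhoff--von~Neumann on the positive-measure intersection graph is exactly the paper's \cref{lemm:beautifull}, the diameter bound for intersecting images plus the inductive propagation of $\eta^{(n)}_l$ via a modulus of continuity reproduces \cref{lemma:second}, and the final split $d_H(T^l(A),T^l_n(A_n))\le d_H(T^l(A),T^l(A_n))+d_H(T^l(A_n),T^l_n(A_n))$ with the union property of $d_H$ is the paper's concluding argument. The only stylistic differences are the doubly stochastic matrix framing of the matching and the explicit recursion $\eta^{(n)}_l\le\omega(\eta^{(n)}_{l-1})+\delta_n$ in place of the paper's $\epsilon$--$\delta$ induction; both are sound.
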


The proof of this result is postponed to the end of this subsection and will involve two intermediate steps. From a numerical analysis standpoint, \cref{thm:important} claims that one can make the finite-time set evolution of $T_n$ \emph{numerically indistinguishable} from that of the true map in both the forward and backward direction by choosing $n\in\mathbb{N}$ sufficiently large. In \cref{fig:periodicillustration} the situation is sketched for one forward iteration of the map.

\begin{figure}
	\begin{center}
		\includegraphics[width=.6\textwidth]{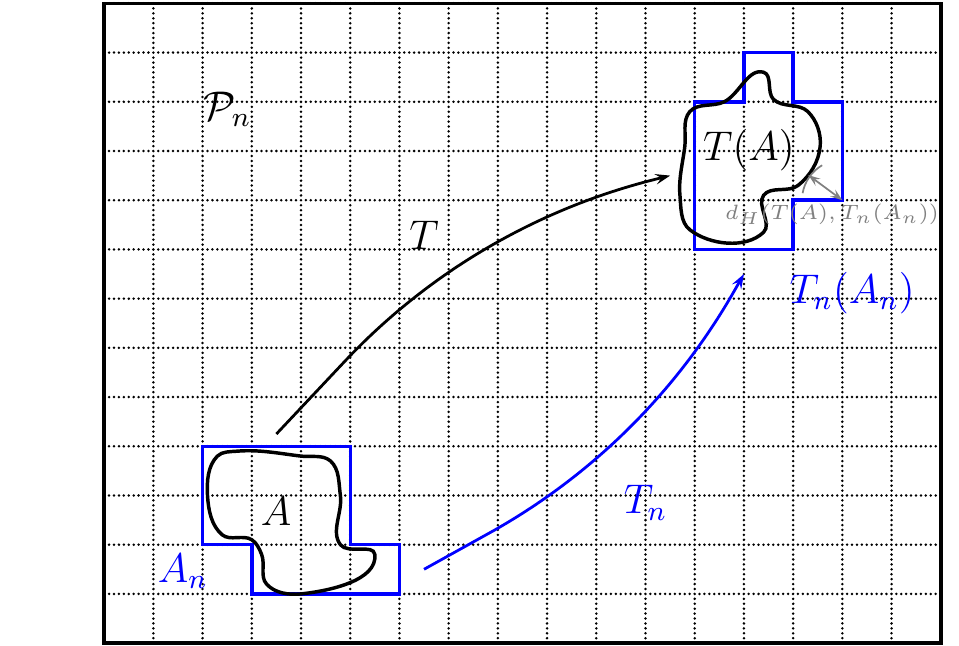} \\
	\end{center} 
	\caption{Shown is a partition $\mathcal{P}_n$ of a compact domain $X$ into square boxes. The set $A$ is (over-)approximated by the partition elements with $A_n$. As a consequence of \cref{thm:important},  the distance between the images: $T(A)$ and $T_n(A_n)$, must converge to zero in the Haussdorf metric as the partition is consecutively refined.} \label{fig:periodicillustration}
\end{figure}

We note that our formulation of the periodic approximation is more along the lines of that proposed by Lax \cite{Lax1971} and not equivalent to the ones proposed by Katok and Stepin \cite{Katok1967}. There, the quality of the approximation was phrased in the measure-theoretic context, where the proximity of $T_n$ to $T$ is described in terms of the one-iteration cost \cite{Katok1967}:
\begin{equation} 
\displaystyle \sum^{q(n)}_{i = 1}  \mu \left( T(p_{n,i})  \Delta  T_n(p_{n,i}) \right)    \label{eq:measuretheorysense} 
\end{equation}
with $\Delta$ denoting the symmetric set difference, i.e. $A \Delta B := (A\backslash B) \cup (B \backslash A)$. It turns out that a specific sequence of maps $\{T_n :\mathcal{P}_{n}\mapsto \mathcal{P}_{n} \}_{n=1}^{\infty}$ may converge in the sense of \cref{thm:important}, while not converging in the sense of \eqref{eq:measuretheorysense}. Relatively simple examples of such sequences may be constructed. Take for example the map $T(x) = (x + \frac{1}{2}) \mod 1$ on the unit-length circle and choose a partition $\mathcal{P}_{n} = \left\{p_{n, 1}, p_{n, 2}, \ldots, p_{n, r^n} \right\}$ with $p_{n, i} = \left[\frac{i-1}{r^n} ,\frac{i}{r^n} \right]$ and $r$ being odd. The mapping:
$$	
T_{n}(p_{n, j}) =  p_{n, j^*}, \qquad j^* = \begin{cases} j + \lfloor \frac{r^n}{2} \rfloor  &  j + \lfloor \frac{r^n}{2} \rfloor \leq r^n \\ j + \lfloor \frac{r^n}{2} \rfloor - r^n   &  j + \lfloor \frac{r^n}{2} \rfloor > r^n    \end{cases},
$$
is the best one can do in terms of the cost \eqref{eq:measuretheorysense}, yet
$$ \sum^{q(n)}_{i = 1}  \mu \left( T(p_{n,i})  \Delta  T_n(p_{n,i}) \right) = 1, \qquad \forall n\in\mathbb{N}. $$
We also note that the convergence results of the measure-theoretic formulation (see \cite{Halmos1944}) were stated for general automorphisms. Here, we restrict ourselves to just continuous automorphisms.

\begin{lemma} \label{lemm:beautifull}
	Let $T:X\mapsto X$ satisfy the hypothesis stated in \cref{thm:important}. Then, for any partition $\mathcal{P}_n$ that satisfies the condition \eqref{eq:equalsized}, there exists a bijection $T_{n}: \mathcal{P}_{n} \mapsto \mathcal{P}_{n}$ with the property:
	\begin{equation}
	T(p_{n, l}) \cap T_n (p_{n, l}) \ne \emptyset  \quad\mbox{and}\quad  T^{-1}(p_{n, l}) \cap T^{-1}_n (p_{n, l})  \ne \emptyset, \qquad\qquad     \forall l\in\left\{1,2,\ldots,q(n)\right\}.  \label{eq:condition}
	\end{equation} 	
\end{lemma}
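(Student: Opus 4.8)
The plan is to realize $T_n$ as the output of a bipartite matching argument. Build a bipartite graph $G_n$ whose left vertices are the partition elements $p_{n,1},\dots,p_{n,q(n)}$ (thought of as ``sources'') and whose right vertices are again $p_{n,1},\dots,p_{n,q(n)}$ (thought of as ``targets''), and place an edge from $p_{n,i}$ to $p_{n,j}$ precisely when the \emph{double-sided} compatibility condition
$$
T(p_{n,i}) \cap p_{n,j} \ne \emptyset \quad\text{and}\quad T^{-1}(p_{n,j}) \cap p_{n,i} \ne \emptyset
$$
holds. A perfect matching in $G_n$ is exactly a bijection $T_n:\mathcal{P}_n\mapsto\mathcal{P}_n$ satisfying \eqref{eq:condition}: indeed $T_n(p_{n,l}) = p_{n,j}$ being an edge gives $T(p_{n,l})\cap T_n(p_{n,l})\ne\emptyset$ directly, and the second clause $T^{-1}(p_{n,j})\cap p_{n,l}\ne\emptyset$ is precisely $T^{-1}(p_{n,l})\cap T_n^{-1}(p_{n,l})\ne\emptyset$ after relabelling. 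Note that $T(p_{n,i})\cap p_{n,j}\ne\emptyset$ is equivalent to $p_{n,i}\cap T^{-1}(p_{n,j})\ne\emptyset$ since $T$ is a bijection of $X$, so in fact a single condition defines the edge and the two clauses of \eqref{eq:condition} coincide; this simplification should be recorded early.

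The heart of the matter is to verify Hall's marriage condition for $G_n$. I would argue this via measure. Fix a subset $S\subseteq\mathcal{P}_n$ of left vertices and let $N(S)$ be its neighbourhood among the right vertices. Let $U := \bigcup_{p\in S} p$. Every element of $T(U) = \bigcup_{p\in S} T(p)$ lies in some partition element, and any partition element it meets with positive measure is — up to the zero-measure overlaps between partition elements — a neighbour of some $p\in S$; more carefully, because the $p_{n,j}$ are compact and cover $X$, the compact connected set $T(p)$ is contained in $\bigcup_{p_{n,j}\cap T(p)\ne\emptyset} p_{n,j}$, hence $T(U)\subseteq \bigcup_{p\in N(S)} p$. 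Taking measures and using that $T$ is measure-preserving together with the equal-measure property \eqref{eq:equalsized},
$$
|S|\cdot\frac{\mu(X)}{q(n)} = \mu(U) = \mu(T(U)) \le \mu\!\Big(\bigcup_{p\in N(S)} p\Big) \le |N(S)|\cdot\frac{\mu(X)}{q(n)},
$$
so $|N(S)|\ge|S|$. Hall's theorem then yields a perfect matching, i.e. the desired bijection $T_n$.

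The step I expect to require the most care is the passage ``$T(p)$ meets $p_{n,j}$ with positive measure $\Rightarrow$ $p_{n,j}\in N(S)$'' versus the edge definition, which only asks for \emph{nonempty} intersection: these differ on the measure-zero boundary overlaps, and one must make sure the measure inequality above is not spoiled by partition elements that $T(U)$ touches only on a null set. The clean fix is to observe that the edge condition is \emph{weaker} than positive-measure intersection, so $N(S)$ as defined by nonempty intersection only gets \emph{larger}, and the inequality $\mu(T(U))\le\sum_{p\in N(S)}\mu(p)$ still holds because the elements of $N(S)$ (defined via positive measure) already cover $T(U)$ up to a null set and are a subset of the graph-neighbourhood. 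A second minor point is to confirm that the graph is well-defined and finite (immediate, since $q(n)<\infty$) and that compactness of the $p_{n,j}$ is genuinely used to guarantee $T(p)$ is covered by the elements it intersects — this is where the hypothesis that each $p_{n,j}$ is compact and connected, and that $T$ is continuous, enters. No refinement property or diameter bound is needed for this lemma; only \eqref{eq:equalsized}, continuity, and invertibility of $T$ are used.
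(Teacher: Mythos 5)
Your proposal is correct and follows essentially the same route as the paper: build the bipartite intersection graph on $\mathcal{P}_n$ and verify Hall's marriage condition using measure preservation of $T$ together with the equal-measure property \eqref{eq:equalsized}, then read the perfect matching off as $T_n$. The only differences are cosmetic — you define edges by nonempty rather than positive-measure intersection and obtain Hall's inequality by covering $T(U)$ with the neighbourhood's elements, while the paper works with positive-measure overlaps and a degree count, and you replace the paper's short contradiction argument for the backward clause of \eqref{eq:condition} by the direct identity $T(p_{n,i})\cap p_{n,j}\ne\emptyset \iff p_{n,i}\cap T^{-1}(p_{n,j})\ne\emptyset$; both deductions are valid.
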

\begin{proof}
	It suffices to show that there exists a map $T_n: \mathcal{P}_{n} \mapsto \mathcal{P}_{n}$  with the property: $\mu( T(p_{n, l}) \cap T_n (p_{n, l}) ) > 0$ for all $l\in\left\{1,2,\ldots,q(n)\right\}$, since $\mu(A \cap B) > 0$ implies $A \cap B \ne\emptyset$ for any $A,B \in \mathcal{M}$, and:
	$$ \mu( T(p_{n, l}) \cap T_n (p_{n, l}) ) > 0 , \quad\forall l\in\left\{1,2,\ldots,q(n)\right\}    \quad\Rightarrow\quad \mu( T^{-1}(p_{n, l}) \cap T^{-1}_n (p_{n, l}) ) > 0, \quad     \forall l\in\left\{1,2,\ldots,q(n)\right\}.  $$
	Otherwise, let $T^{-1}_n (p_{n, k}) = p_{n, s}$ and $ \mu( T^{-1}(p_{n, k}) \cap T^{-1}_n (p_{n, k}) ) = 0$ for some $k\in\left\{1,2,\ldots,q(n)\right\}$, then $\mu(T(p_{n, s}) \cap T_n(p_{n, s})) = 0$ is a contradiction.
	
	Let $G_n =(\mathcal{P}_{n}, \mathcal{P}^{'}_{n} , E )$ denote a bipartite graph where $\mathcal{P}^{'}_{n}$ is a copy of $\mathcal{P}_{n}$ and $(p_{n,k}, {p}_{n,l}) \in E$ if $\mu( T(p_{n, k}) \cap p_{n, l}) > 0$. In order to generate a bijective map so that $\mu( T(p_{n, l}) \cap T_n (p_{n, l}) ) > 0$ for all $l\in\left\{1,2,\ldots,q(n)\right\}$, we need to uniquely assign every element in $\mathcal{P}_{n}$ with one in $\mathcal{P}_{n}$ using one of the existing edges. We call this new graph $\tilde{G}_n =(\mathcal{P}_{n}, \mathcal{P}^{'}_{n} , \tilde{E})$ (see \cref{fig:matching}). To verify that such an assignment is possible, we need to confirm that $G_n$ admits a perfect matching. 
	
	\begin{figure}[h!]
		\begin{center}
			\includegraphics[width=.6\textwidth]{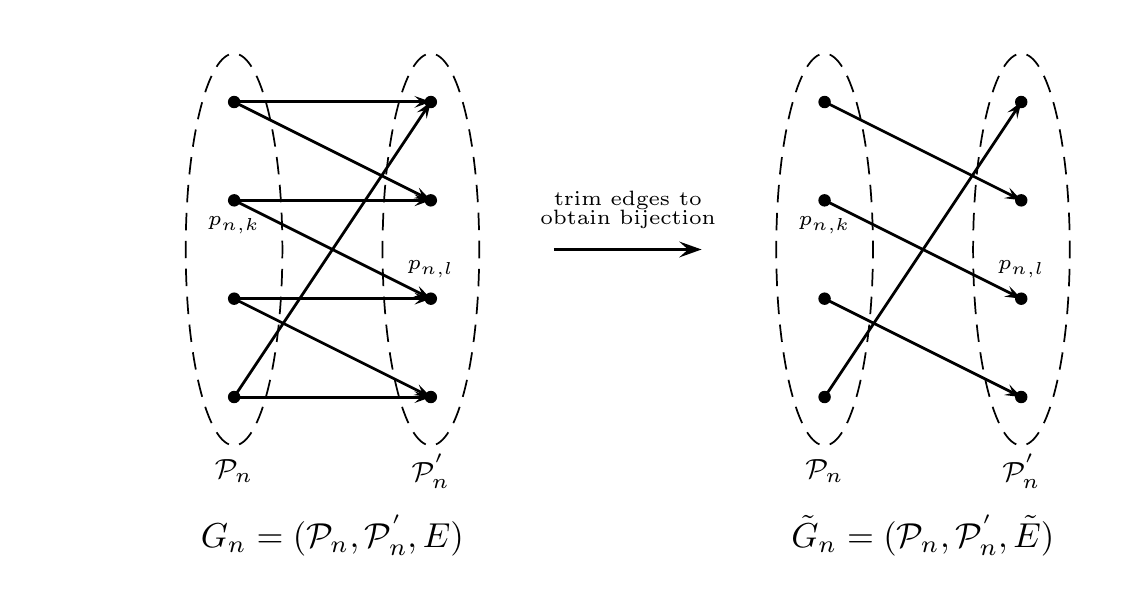} \\
		\end{center}
		\caption{Because of the measure-preserving property, the graph $G_n$ satisfies Hall's marriage conditions, and hence admits a perfect matching.} \label{fig:matching}
	\end{figure}
	To show that this is indeed the case,  let $N_G(B) \subset \mathcal{P}^{'}_{n}$ be the set of all vertices in $\mathcal{P}^{'}_{n}$ adjacent for some $B\subset \mathcal{P}_{n}$. By Hall's marriage theorem (see e.g. \cite{Cameron1994}), $G_n$ has a perfect matching if and only if the cardinality $|B | \leq | N_G(B)|$ for any $B\subset \mathcal{P}_{n}$. Since $T$ is a measure-preserving automorphism and because of condition \eqref{eq:equalsized}, it follows that for any $k\in\left\{1,2,\ldots,q(n)\right\}$, we must have: 
	$$     \sum^{q(n)}_{ l = 1}\mu( T(p_{n, k}) \cap p_{n, l} ) =  \frac{\mu(X)}{q(n)}, \qquad  0 \leq \mu( T(p_{n, k}) \cap p_{n, l} ) \leq \frac{\mu(X)}{q(n)}.$$
	Because of these properties, we deduce that:
	\begin{eqnarray*}
		| N_G(B)|	 & := & \sum_{k\in B} \left( \sum_{ l: \mu( T(p_{n, k}) \cap p_{n, l} > 0)} 1 \right) \\ 
		& \geq &  \sum_{k\in B} \frac{q(n)}{\mu(X)} \sum_{ l=1}^{q(n)}  \mu( T(p_{n, k}) \cap p_{n, l} ) \\
		& = &  \sum_{k\in B} 1 = |B|
	\end{eqnarray*}
	for any arbitrary $B \subset \mathcal{P}_{n}$. Hence, $G_n$ has a perfect matching.
\end{proof}

We remark that \eqref{eq:equalsized} plays an essential role in the proof of \cref{lemm:beautifull}, since it is easy to construct counter-examples of non-uniform partitions which fail to satisfy \eqref{eq:condition}: e.g. choose a compact set $A\in\mathcal{M}$ for which $T(A) \cap A = \emptyset$ and pick a partition wherein $A$ is divided into $r$ parts and $T(A)$ into $s$ parts, with $r\ne s$. The value of \cref{lemm:beautifull} is that it can be used as a means to bound the distance between the forward and inverse images of the partition elements in the Hausdorff metric.

\begin{lemma} \label{lemma:second}
	Let $T:X\mapsto X$ satisfy the hypothesis stated in \cref{thm:important} and let $\left\{\mathcal{P}_n\right\}^{\infty}_{n=1}$ be a sequence of measurable partitions that satisfy both the conditions \eqref{eq:equalsized} and \eqref{eq:partitioncond}. If $\left\{T_{n}: \mathcal{P}_{n} \mapsto \mathcal{P}_{n} \right\}^{\infty}_{n=1}$ is a sequence of bijective maps which satisfy the property \eqref{eq:condition} for each $n\in\mathbb{N}$, then  
	\begin{equation} 
	\lim_{n\rightarrow \infty} \displaystyle \sum^{k}_{l = -k} \displaystyle \max_{p \in \mathcal{P}_n}  d_H(T^l(p), T^l_n(p))  = 0  \label{eq:convergence}
	\end{equation}
	for every $k\in\mathbb{N}$.
\end{lemma}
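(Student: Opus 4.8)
The plan is to prove \eqref{eq:convergence} by induction on $k$ and to leverage the uniform continuity of $T$ and $T^{-1}$ on the compact metric space $X$. The base case $k=0$ is trivial since $d_H(p,p)=0$. For the inductive step, the key observation is that $T^l(p)$ and $T^l_n(p)$ are ``anchored'' together by \cref{lemm:beautifull}: property \eqref{eq:condition}, applied iteratively, guarantees that the orbit of a partition element under $T$ and under $T_n$ never completely separates, because consecutive images share at least one point. So first I would record the uniform continuity statement: for every $\varepsilon>0$ there is $\delta>0$ such that $d(x,y)<\delta$ implies $d(T^{\pm 1}(x),T^{\pm 1}(y))<\varepsilon$, and hence $d_H(T^{\pm 1}(A),T^{\pm 1}(B)) < \varepsilon$ whenever $d_H(A,B)<\delta$, for all $A,B\subseteq X$.

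The core estimate I would establish is a one-step bound of the form
\begin{equation}
d_H\bigl(T^{l+1}(p),\,T^{l+1}_n(p)\bigr) \;\leq\; \omega\!\left( d_H\bigl(T^l(p),\,T^l_n(p)\bigr) + \operatorname{diam}\bigl(T^l_n(p)\bigr) \right) + \operatorname{diam}\bigl(T^{l+1}_n(p)\bigr), \notag
\end{equation}
where $\omega$ is a modulus of continuity for $T$, and similarly for negative powers using the modulus of continuity of $T^{-1}$. The reason the $\operatorname{diam}$ terms enter is that $T^{l}_n(p)$ is a \emph{single partition element} $p_{n,j}$, not the set $T(T^{l-1}_n(p))$; the two are linked only through \eqref{eq:condition}, which says they intersect. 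Thus $T^l_n(p)$ lies within Hausdorff distance $\operatorname{diam}(p_{n,\cdot}) \leq l(n)$ of $T(T^{l-1}_n(p))$, and then applying $T$ (with its modulus of continuity) and invoking the triangle inequality for $d_H$ chains the bound forward. Crucially, by \eqref{eq:partitioncond} every partition element has diameter at most $l(n) \to 0$, so all the diameter terms are $O(l(n))$ uniformly in $j$ and vanish in the limit.

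Carrying out the induction: assume $\max_{p\in\mathcal P_n} d_H(T^l(p),T^l_n(p)) \to 0$ for a given $l\geq 0$. Using the one-step bound and the fact that $\operatorname{diam}(T^l_n(p)) \leq l(n) \to 0$, together with continuity of $\omega$ at $0$ (i.e. $\omega(t)\to 0$ as $t\to 0$), we get $\max_{p} d_H(T^{l+1}(p),T^{l+1}_n(p)) \to 0$; the negative direction $l \to l-1$ is handled identically using $T^{-1}$ in place of $T$. Since $[-k,k]$ is finite, summing the finitely many terms preserves convergence to zero, giving \eqref{eq:convergence}.

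The main obstacle I anticipate is making the one-step bound fully rigorous in the backward-and-forward chaining: one must be careful that the anchoring from \eqref{eq:condition} is between $T^{\pm1}(p_{n,l})$ and $T^{\pm1}_n(p_{n,l})$ for the \emph{same} element, and then correctly compose these single-step comparisons along the orbit while controlling the accumulation of the $O(l(n))$ diameter errors (there are at most $2k$ of them, so the total error is $\leq C(k)\,\omega^{(k)}(l(n))$-type expression, still vanishing). A secondary subtlety is that uniform continuity of $T^{-1}$ is needed and is available precisely because $T$ is a continuous bijection of a compact metric space, hence a homeomorphism — this is where the compactness hypothesis on $X$ is essential.
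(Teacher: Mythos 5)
Your proposal is correct and follows essentially the same route as the paper: induction on the number of iterates, the one-step ``anchoring'' bound $d_H(T^{\pm1}(p),T^{\pm1}_n(p))\leq \mathrm{diam}(T^{\pm1}(p))+\mathrm{diam}(T^{\pm1}_n(p))$ coming from the nonempty intersections guaranteed by \eqref{eq:condition}, and uniform continuity of $T$ and $T^{-1}$ (available since a continuous bijection of a compact metric space is a homeomorphism) to propagate the estimate through the chain while the $O(l(n))$ diameter terms vanish. Your packaging of the argument via a modulus of continuity $\omega$ is just a restatement of the paper's $\epsilon$--$\delta$ bookkeeping, so no substantive difference.
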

\begin{proof}
	We will prove this claim using induction. Set $k=1$, if $T_n: \mathcal{P}_n \mapsto  \mathcal{P}_n$ is a bijective map satisfying  the property \eqref{eq:condition} then we must have that:  
	\begin{equation} 
		d_H( T(p_{n, j})  , T_n(p_{n, j}) )  \leq \mathrm{diam}( T(p_{n, j})) + \mathrm{diam} (T_n(p_{n, j})) \label{eq:something}
	\end{equation}
	and 
	$$ d_H( T^{-1}(p_{n, j})  , T^{-1}_n(p_{n, j}) )  \leq \mathrm{diam}( T^{-1}(p_{n, j})) + \mathrm{diam} (T^{-1}_n(p_{n, j})).$$
	Let $\epsilon>0$ and note that $T$  has a continuous inverse, since the map is a continuous bijection on a compact metric space $X$. By compactness, there exist a $\delta>0$ such that: 
	$$\mathrm{diam} (p_{n, j}) < \delta  \qquad \Rightarrow  \qquad  \mathrm{diam} (T(p_{n, j})) < \epsilon/4, \quad  \mathrm{diam} (T^{-1}(p_{n, j})) < \epsilon/4$$
	Pick $n\in \mathbb{N}$ so that $l(n)<\min\left\{\delta, \epsilon/4 \right\}$ to obtain:
	\begin{eqnarray*}
		d_H( T^{-1}(p_{n, j})  , T^{-1}_n(p_{n, j}) ) +  d_H( T(p_{n, j})  , T_n(p_{n, j}) ) & < & \frac{\epsilon}{4}  + \min\left\{\delta, \frac{\epsilon}{4} \right\} + \frac{\epsilon}{4}  + \min\left\{\delta, \frac{\epsilon}{4} \right\} \leq \epsilon
	\end{eqnarray*}	
	Since $\epsilon$ and $p_{n, j}\in \mathcal{P}_n$ are both arbitrary, we have proved \eqref{eq:convergence} for the case where $k=1$.
	
	Now to prove the result for some fixed $k>1$, we note from the triangle inequality that:
	$$ d_H(T^{k}(p_{n,j}), T^{k}_n(p_{n,j}) ) \leq  d_H(T(T^{k-1}(p_{n,j})), T(T^{k-1}_n(p_{n,j})) ) + d_H(T(T^{k-1}_n(p_{n,j})), T_n(T^{k-1}_n(p_{n,j})) ) $$
	Using the inductive hypothesis, assume that \eqref{eq:convergence} is true for $k-1$ so that the distance $d_H (T^{k-1}(p_{n,j}) , T^{k-1}_n(p_{n,j}))$ can be made arbitrarily small by choosing $n\in\mathbb{N}$ sufficiently large. By uniform continuity of $T$, there exist a $N_a\in \mathbb{N}$ sufficiently large, so that:
	$$ d_H(T(T^{k-1}(p_{n,j})), T(T^{k-1}_n(p_{n,j})) )\leq \frac{\epsilon}{4}\quad \mbox{and}\quad d_H(T(T^{k-1}_n(p_{n,j})), T_n(T^{k-1}_n(p_{n,j})) ) \leq  \frac{\epsilon}{4} $$
	for all $n>N_a$. Analogously, there exists a $N_b\in\mathbb{N}$ so that: 
	$$ d_H(T^{-1}(T^{-k+1}(p_{n,j})), T^{-1}(T^{-k+1}_n(p_{n,j})) )\leq \frac{\epsilon}{4}\quad \mbox{and}\quad d_H(T^{-1}(T^{-k+1}_n(p_{n,j})), T^{-1}(T^{-k+1}_n(p_{n,j})) ) \leq  \frac{\epsilon}{4} $$
	for all $n>N_b$. Setting $N_c=\max\left\{N_a, N_b\right\}$, we get:
	$$ \displaystyle  d_H(T^{-k}(p_{n,j}), T^{-k}_n(p_{n,j})) +  d_H(T^k(p_{n,j}), T^k_n(p_{n,j})) \leq {\epsilon} $$
	for all $n>N_c$.  Overall, we have:
	$$ \displaystyle \sum^{k}_{l = k} \displaystyle   d_H(T^l(p_{n,j}), T^l_n(p_{n,j}))  \leq k {\epsilon}, \qquad 
	\forall n>N_c.  $$
\end{proof}

Note that continuity of $T$ plays a critical role in the proof of \cref{lemma:second}. Furthermore, observe that $T$ is assumed to have an absolutely continuous invariant measure with $\mathrm{supp}(\mu) = X$, which ensures that both conditions \eqref{eq:equalsized} and \eqref{eq:partitioncond} are satisfied, simultaneously. With \cref{lemm:beautifull,lemma:second} we now have the necessary results to complete the proof of \cref{thm:important}.
\begin{proof}[Proof of \cref{thm:important}]
	Let $\left\{T_{n}: \mathcal{P}_{n} \mapsto \mathcal{P}_{n} \right\}^{\infty}_{n=1}$ be a sequence of bijective maps that satisfies the property of \cref{lemm:beautifull} for each $n\in\mathbb{N}$. Set $\epsilon>0$ and note that $A_n$ is compact, since it is a finite union of compact sets. By the triangle inequality, we have that:
	$$ \sum^{k}_{l = -k}  d_H(T^l(A), T^l_n(A_n))  \leq   \sum^{k}_{l = -k} d_H(T^{l}(A), T^{l}(A_n)) + \sum^{k}_{l = -k} d_H(T^{l}(A_n), T^{l}_n(A_n))$$ 
	We will show that each term above can be made arbitrarily small by selecting $n$ sufficiently large. From uniform continuity of $T$ and that $d_H(A,A_n)$ is monotonically decreasing to $0$ as $n \rightarrow \infty$, it follows that there exists a $N_1\in \mathbb{N}$ so that the first sum is bounded by $\epsilon/2$ for all $n>N_1$. In order to find also a bound for the second sum, notice that:
	\begin{eqnarray*}
		\sum^{k}_{l = -k} d_H(T^{l}(A_n), T^{l}_n(A_n))  & = &  \sum^{k}_{l = -k} d_H\left( \bigcup_{p\in P_n, p \cap A \ne \emptyset } T^l (p) ,  \bigcup_{p\in P_n, p \cap A \ne \emptyset } T^l_n (p) \right)  \\
		& \leq & \sum^{k}_{l = -k} \displaystyle \max_{p\in P_n, p \cap A \ne \emptyset} d_H(T^l(p), T^l_n(p)) \\
		& \leq & \sum^{k}_{l = -k} \displaystyle \max_{p\in P_n} d_H(T^l(p), T^l_n(p)) 
	\end{eqnarray*}
	where we have made repetitive use of the Haussdorf property: $d_H(A\cup B, C \cup D) \leq \max\left\{ d_H(A, C), d_H(B,D)  \right\}$ in the first inequality. From \cref{lemma:second}, it follows that we can choose $n>N_2$ so that the second sum is also bounded by $\epsilon/2$. We finally set $N = \max\left\{N_1, N_2 \right\}$ to complete the proof.  
\end{proof}

\section{Operator convergence} \label{sec:opconvergence}

Next we will prove how the sequence of discrete Koopman operators $\left\{ \K_n \right\}^\infty_{n=1}$ converges to the operator defined in \eqref{eq:KOOPMAN}, whenever $\left\{ T_n \right\}^\infty_{n=1}$ is a sequence of periodic approximations converging to the $T$ in the sense of  \cref{thm:important}. The following technical lemma is required.

\begin{lemma} \label{lemm:measureconv}
	Let $\left\{A_n\right\}^{\infty}_{n=1}$ be a sequence of compact sets converging monotonically to the compact set $A$ in the Haussdorf metric (i.e. $\lim_{n\rightarrow \infty} d_{H}(A, A_n) = 0$) and suppose that $\mu( A_n ) = \mu(A)$ for every $n\in\mathbb{N}$. Then,
	\begin{equation} 
	\lim_{n\rightarrow \infty} \mu(A \Delta A_n) = 0    \label{eq:measure}
	\end{equation}
\end{lemma}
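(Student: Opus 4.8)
The plan is to use the equal-measure hypothesis to reduce the two-sided symmetric difference to a one-sided quantity, and then to bound that quantity by the measure of a shrinking closed neighbourhood of $A$. Writing $A\Delta A_n=(A\setminus A_n)\cup(A_n\setminus A)$ as a disjoint union and splitting both $\mu(A)$ and $\mu(A_n)$ along $A\cap A_n$, the assumption $\mu(A)=\mu(A_n)$ forces $\mu(A\setminus A_n)=\mu(A_n\setminus A)$, so that
\begin{equation}
\mu(A\Delta A_n)=2\,\mu(A_n\setminus A).
\end{equation}
(All the sets involved have finite measure since $\mu(X)<\infty$, so the subtractions are legitimate.) It therefore suffices to prove $\mu(A_n\setminus A)\to 0$; note this reduction does not use $A\subseteq A_n$, which happens to hold for the over-approximations $A_n$ of \cref{thm:important} but is not assumed in the lemma.

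Next I would turn Hausdorff closeness into a set inclusion. Set $\epsilon_n:=d_H(A,A_n)$, so $\epsilon_n\to 0$. By the definition of the Hausdorff metric, $\sup_{b\in A_n}d(b,A)\le\epsilon_n$, where $d(x,A):=\inf_{a\in A}d(x,a)$; hence $A_n\subseteq B_{\epsilon_n}(A)$, where $B_\epsilon(A):=\{\,x\in X:d(x,A)\le\epsilon\,\}$, and consequently
\begin{equation}
\mu(A_n\setminus A)\le\mu\bigl(B_{\epsilon_n}(A)\setminus A\bigr).
\end{equation}
So the whole lemma comes down to the single fact that $\mu\bigl(B_\epsilon(A)\setminus A\bigr)\to 0$ as $\epsilon\downarrow 0$ — this is the heart of the matter, and it is where compactness of $A$ and finiteness of $\mu$ are really used.

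To establish it, observe that the sets $B_\epsilon(A)\setminus A$ decrease as $\epsilon\downarrow 0$ and that $\bigcap_{\epsilon>0}B_\epsilon(A)=\{\,x\in X:d(x,A)=0\,\}=\overline{A}=A$, because $A$ is compact and hence closed; thus $\bigcap_{\epsilon>0}\bigl(B_\epsilon(A)\setminus A\bigr)=\emptyset$. Since $\mu$ is a finite measure, continuity from above along any sequence $\epsilon_k\downarrow 0$ gives $\mu\bigl(B_{\epsilon_k}(A)\setminus A\bigr)\to 0$; given $\delta>0$ we fix such a $k$ with $\mu\bigl(B_{\epsilon_k}(A)\setminus A\bigr)<\delta$, and then $\epsilon_n\le\epsilon_k$ for all large $n$, whence $\mu(A_n\setminus A)\le\mu\bigl(B_{\epsilon_n}(A)\setminus A\bigr)\le\mu\bigl(B_{\epsilon_k}(A)\setminus A\bigr)<\delta$. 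Letting $\delta\to 0$ yields $\mu(A_n\setminus A)\to 0$, and the first display finishes the proof. I do not expect a genuine obstacle here; the only mild subtlety is that $\{\epsilon_n\}$ need not be monotone in $n$, so continuity from above must be applied along an auxiliary decreasing sequence rather than directly to $\{B_{\epsilon_n}(A)\setminus A\}_n$ — which is exactly the place where the word ``monotonically'' in the hypothesis (although, as this sketch shows, not strictly needed, and neither is absolute continuity of $\mu$) would streamline the bookkeeping.
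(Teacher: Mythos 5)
Your proof is correct and follows essentially the same route as the paper's: reduce to $\mu(A_n\setminus A)$ via the equal-measure hypothesis, bound $A_n\setminus A$ by the $\epsilon_n$-fattening of $A$ minus $A$, and conclude by continuity from above of the finite measure applied to the shrinking neighbourhoods of the closed set $A$ (the paper cites Rudin, Theorem~1.19(e), for this last step). Your remark about handling a possibly non-monotone sequence $\{\epsilon_n\}$ via an auxiliary decreasing sequence is a valid minor refinement; the paper sidesteps it by assuming monotone convergence in the hypothesis.
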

\begin{proof}
	By definition: $\mu(A \Delta A_n) = \mu(A \setminus A_n) + \mu(A_n \setminus A)$, and hence, to prove \eqref{eq:measure} we must show that both of these terms go to zero in the limit. However, since $\mu( A_n ) = \mu(A)$, and:
	$$ \mu(A_n \setminus A)  =  \mu( A_n ) -  \mu( A \cap A_n), \qquad \mu(A \setminus A_n) =  \mu( A)  - \mu( A \cap A_n)$$
	it follows that $\mu(A \setminus A_n)  = \mu(A_n \setminus A)$, and therefore it sufficient to show that either $\mu(A \setminus A_n) $ or $\mu(A_n \setminus A)$ tends to zero.
	
	Consider $\mu(A_n \setminus A)$.  Let $A_{\epsilon(n)} := \bigcup_{x\in A} B_{\epsilon(n)}(x)$ be the $\epsilon(n)$-fattening of $A$ where $\epsilon(n) := d_{H}(A, A_n)$. Observe that:
	$$   (A_n \setminus A) \subset  (A_{\epsilon(n)}  \setminus A ) \qquad \Rightarrow \qquad  0\leq \mu(A_n \setminus A) \leq  \mu(A_{\epsilon(n)} \setminus A)  $$
	Since $\epsilon(n)$ is a monotonically decreasing sequence, from Theorem~1.19(e) in \cite{Rudin1987} we know that:
	$$ \bigcap_{n=1}^{\infty} A_{\epsilon(n)} = A\quad \mbox{and} \quad \mu(A_{\epsilon(n)}) \rightarrow  \mu(A) \mbox { as } n\rightarrow \infty.$$
	Therefore, $\mu(A_{\epsilon(n)} \setminus A) \rightarrow 0$ as $n\rightarrow \infty$, leading to the desired result.
\end{proof}

In addition to \cref{lemm:measureconv}, we will require knowing some properties on the averaging operation \eqref{eq:averageoperator}. The operator \eqref{eq:averageoperator} is an \emph{approximation of the identity} and therefore $\left\| g-g_n \right\| \rightarrow 0$ as $n\rightarrow \infty$. This property can be verified by first establishing this fact for continuous observables and then employ the fact that $C(X)$ is dense in $L^2(X,\mathcal{M},\mu)$. In addition to being an approximation to the identity,  \eqref{eq:averageoperator} is an orthogonal projector which maps observables $g\in L^2(X,\mathcal{M},\mu)$ to their best approximations $g_n\in L^2_n(X,\mathcal{M},\mu)$.
This follows from the fact that $\mathcal{W}_n$ is idempotent and that $\langle g- \mathcal{W}_n g, \mathcal{W}_n g  \rangle = 0$.
\begin{theorem}[Operator convergence] \label{thm:opconvergence}
	Let $T:X\mapsto X$ satisfy the hypothesis of \cref{thm:important} and suppose that   $\left\{T_{n}: \mathcal{P}_{n} \mapsto \mathcal{P}_{n} \right\}^{\infty}_{n=1}$ is a sequence of discrete maps that periodically approximates $T$ in the sense of \eqref{eq:setconvergence}. Then, given any $k\in \mathbb{N}$ and $g\in L^2(X,\mathcal{M},\mu)$, we have:
	\begin{equation} 	\lim_{n \rightarrow \infty}  \displaystyle \sum^{k}_{l = -k} \left\| \K^l g - \K^l_n g_n \right\|^2 = 0, \label{eq:opconvergence}
	\end{equation} 
	where $g_n\in L^2_n(X,\mathcal{M},\mu)$ refers to the operation \eqref{eq:averageoperator}.
\end{theorem}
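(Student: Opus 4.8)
\textbf{Strategy and reduction to $g\in C(X)$.} The plan is to first reduce to continuous observables and then, for fixed $g\in C(X)$, compare $\K_n^l g_n$ with the $\mathcal{P}_n$-average of $\K^l g$ cell by cell, using continuity of $g$ to convert the \emph{Hausdorff} closeness of the images of partition elements (supplied by \cref{lemma:second}) into closeness of their $g$-averages. For the reduction, note that $\K$ and every $\K_n$ are unitary and $\mathcal{W}_n$ is an orthogonal projector, so for any $g,h\in L^2(X,\mathcal{M},\mu)$ and any $l$,
\begin{equation*}
\left\|\K^l g-\K_n^l g_n\right\|\le \left\|g-h\right\|+\left\|\K^l h-\K_n^l h_n\right\|+\left\|\mathcal{W}_n(h-g)\right\|\le 2\left\|g-h\right\|+\left\|\K^l h-\K_n^l h_n\right\|.
\end{equation*}
Squaring, using $(a+b)^2\le 2a^2+2b^2$, and summing over $-k\le l\le k$ gives
\begin{equation*}
\sum_{l=-k}^{k}\left\|\K^l g-\K_n^l g_n\right\|^2\le 8(2k+1)\left\|g-h\right\|^2+2\sum_{l=-k}^{k}\left\|\K^l h-\K_n^l h_n\right\|^2 .
\end{equation*}
Since $C(X)$ is dense in $L^2(X,\mathcal{M},\mu)$, it suffices to establish \eqref{eq:opconvergence} for $g\in C(X)$.

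\textbf{Cell-average representations and uniform smallness of images.} Writing $g_n=\mathcal{W}_n g=\sum_j g_{n,j}\chi_{p_{n,j}}$ with $g_{n,j}=\tfrac{1}{\mu(p_{n,j})}\int_{p_{n,j}}g\,\dd\mu$ the mean of $g$ over $p_{n,j}$, one reads off from \eqref{eq:discreteoperator} that for every $l\in\Z$ and every $i$ the function $\K_n^l g_n$ is constant on $p_{n,i}$ with value $\tfrac{1}{\mu(p_{n,i})}\int_{T_n^l(p_{n,i})}g\,\dd\mu$; likewise, since $T$ is measure preserving, $\mathcal{W}_n(\K^l g)$ is constant on $p_{n,i}$ with value $\tfrac{1}{\mu(p_{n,i})}\int_{T^l(p_{n,i})}g\,\dd\mu$. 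Next, by uniform continuity of $T^l$ on the compact space $X$ together with \eqref{eq:partitioncond} we have $\max_i \mathrm{diam}(T^l(p_{n,i}))\to 0$; moreover $\mathrm{diam}(T_n^l(p_{n,i}))\le l(n)\to 0$ since $T_n^l$ maps cells to cells; and $\max_i d_H(T^l(p_{n,i}),T_n^l(p_{n,i}))\to 0$ by \cref{lemma:second} (recall the periodic approximation produced by \cref{thm:important} satisfies \eqref{eq:condition}, so \cref{lemma:second} applies). Hence
\begin{equation*}
\omega_n:=\max_{|l|\le k}\ \max_{1\le i\le q(n)}\Bigl(\mathrm{diam}(T^l(p_{n,i}))+\mathrm{diam}(T_n^l(p_{n,i}))+d_H(T^l(p_{n,i}),T_n^l(p_{n,i}))\Bigr)\longrightarrow 0 .
\end{equation*}

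\textbf{Conclusion for $g\in C(X)$.} Let $\varpi_g(\delta):=\sup\{|g(x)-g(y)|:d(x,y)\le\delta\}$, so $\varpi_g(\delta)\to 0$ as $\delta\to 0$ by uniform continuity. If $S_1,S_2\subseteq X$ are non-empty compact sets with $\mathrm{diam}(S_1)+\mathrm{diam}(S_2)+d_H(S_1,S_2)\le\omega_n$, then choosing $y_0\in S_2$ and $w\in S_1$ with $d(y_0,w)\le d_H(S_1,S_2)$ yields $d(z,y_0)\le\omega_n$ for all $z\in S_1$, whence $|\mathrm{avg}_{S_1}g-g(y_0)|\le\varpi_g(\omega_n)$ and, similarly, $|\mathrm{avg}_{S_2}g-g(y_0)|\le\varpi_g(\omega_n)$, so $|\mathrm{avg}_{S_1}g-\mathrm{avg}_{S_2}g|\le 2\varpi_g(\omega_n)$. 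Applying this with $S_1=T^l(p_{n,i})$, $S_2=T_n^l(p_{n,i})$ and invoking the two representations above, $\mathcal{W}_n(\K^l g)$ and $\K_n^l g_n$ differ pointwise by at most $2\varpi_g(\omega_n)$, hence $\|\mathcal{W}_n(\K^l g)-\K_n^l g_n\|^2\le 4\mu(X)\varpi_g(\omega_n)^2\to 0$ for each $|l|\le k$. Combining with $\|\K^l g-\mathcal{W}_n(\K^l g)\|\to 0$ (the averaging operator \eqref{eq:averageoperator} is an approximation of the identity) and the triangle inequality, each summand in \eqref{eq:opconvergence} tends to $0$; the sum being finite, the claim follows for $g\in C(X)$, and the reduction above upgrades it to all $g\in L^2(X,\mathcal{M},\mu)$.

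\textbf{Main obstacle.} The difficulty is that the naive bound $\|\mathcal{W}_n(\K^l g)-\K_n^l g_n\|^2\lesssim\sum_i\mu\bigl(T^l(p_{n,i})\,\Delta\,T_n^l(p_{n,i})\bigr)$ need not converge to zero: the shift example in \cref{sec:periodicapprx} shows the right-hand side can equal a positive constant for all $n$, so a purely measure-theoretic argument fails. The resolution is that continuity of the observable turns the Hausdorff proximity of the cell images---which \cref{lemma:second} \emph{does} guarantee, even when the symmetric differences do not shrink---into proximity of their $g$-averages; this is exactly why the density reduction to $C(X)$ has to be performed first and cannot be bypassed by working with indicator functions directly.
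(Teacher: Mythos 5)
Your proof is correct in substance but follows a genuinely different route from the paper's. The paper reduces to simple observables $g\in L^2_m(X,\mathcal{M},\mu)$ (linear combinations of the indicators $\chi_{p_{m,j}}$), exploits the refinement property so that $\mathcal{W}_n\chi_{p_{m,j}}=\chi_{p_{m,j}}$ for $n\ge m$, writes $\|(\K^l-\K^l_n)\chi_{p_{m,j}}\|^2=\mu\bigl(T^{-l}(p_{m,j})\,\Delta\,T^{-l}_n(p_{m,j})\bigr)$ for the \emph{fixed} compact set $p_{m,j}$, and kills this with \eqref{eq:setconvergence} combined with \cref{lemm:measureconv} (Hausdorff convergence plus equality of measures forces the symmetric difference to vanish). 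You instead reduce to $C(X)$ and compare cell averages, trading the measure-theoretic lemma for uniform continuity of the observable. Your ``main obstacle'' remark is a genuine insight: the per-cell symmetric differences at level $n$ need not sum to something small (the circle-shift example in \cref{sec:periodicapprx}), and the paper's fix is to apply the symmetric-difference argument only to fixed coarse sets, while yours is to let continuity of $g$ absorb the discrepancy. Both work; yours avoids \cref{lemm:measureconv} and the refinement property of the partitions entirely, at the price of needing a uniform-over-cells Hausdorff estimate.

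That last requirement is the one place you should be more careful. You invoke $\max_{p\in\mathcal{P}_n}d_H(T^l(p),T^l_n(p))\to 0$, citing \cref{lemma:second}; but \cref{lemma:second} applies to sequences satisfying \eqref{eq:condition}, whereas the theorem's hypothesis is only \eqref{eq:setconvergence}, which quantifies over \emph{fixed} compact sets $A$ and says nothing directly about the shrinking level-$n$ cells. Your parenthetical ``the periodic approximation produced by \cref{thm:important} satisfies \eqref{eq:condition}'' restricts attention to the particular sequence built in that proof rather than an arbitrary one satisfying \eqref{eq:setconvergence}. The gap is patchable by a compactness argument: if $\max_{p}d_H(T(p),T_n(p))\not\to 0$, pick cells $q_k\in\mathcal{P}_{n_k}$ with $d_H(T(q_k),T_{n_k}(q_k))\ge\epsilon$ and points $y_k\in q_k$ converging to some $y$; apply \eqref{eq:setconvergence} to $A=\overline{B_r(y)}$ with $r$ small enough that $\mathrm{diam}(T(A))<\epsilon/4$. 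Since eventually $q_k\subseteq A_{n_k}$ while $\mathrm{diam}(T(q_k))$ and $\mathrm{diam}(T_{n_k}(q_k))$ shrink, the set $T_{n_k}(A_{n_k})$ contains points at distance at least $\epsilon/8$ from $T(A)$, contradicting \eqref{eq:setconvergence}. Either supply this argument or state explicitly that the approximating sequence is assumed to satisfy \eqref{eq:condition}, as all sequences constructed in the paper do. With that repaired, the rest of your argument (the reduction to $C(X)$, the two cell-average representations, and the modulus-of-continuity bound) is sound.
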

\begin{proof}
	Consider observables of the kind: 
	\begin{equation}
	g = \sum_{j=1}^{q(m)} c_{j} \chi_{p_{m,j}} \in  L^2_m(X,\mathcal{M},\mu),\qquad m\in\mathbb{N}    \label{eq:simplefunctions}
	\end{equation}
	For notational clarity, write $A^{(j)} := p_{m,j}$, $g^{(j)} := \chi_{p_{m,j}}$ and $ g^{(j)}_n := \mathcal{W}_n g^{(j)}$. We have:
	\begin{eqnarray*}
		\sum^{k}_{l = -k} \left\| \K^l g - \K^l_n g_n \right\|^2 & = &  \sum^{k}_{l = -k} \left\| \sum_{j=1}^{q(m)} c_{j} ( \K^l g^{(j)} - \K^l_n g^{(j)}_n)   \right\|^2 \\
		& \leq &  \sum^{k}_{l = -k}  \left( \sum_{j=1}^{q(m)}  |c_{j}| \left\| \K^l g^{(j)} - \K^l_n g^{(j)}_n \right\| \right)^{2} \\
		& \leq & M  \sum_{j=1}^{q(m)} \left(  \sum^{k}_{l = -k}   \left\| \K^l g^{(j)} - \K^l_n g^{(j)}_n \right\|^2 \right),\qquad M = \sum_{j=1}^{q(m)}  |c_{j}|^2  \\
		& \leq & q(m) M   \max_{j=1,\ldots, q(m)} \left(  \sum^{k}_{l = -k}   \left\| \K^l g^{(j)} - \K^l_n g^{(j)}_n \right\|^2 \right)     
	\end{eqnarray*}
	Hence, it suffices to show that:
	\begin{equation} \lim_{n\rightarrow \infty}  \sum^{k}_{l = -k} \left\| \K^l g^{(j)} - \K^l_n g^{(j)}_n \right\|^2  = 0, \qquad j=1,\ldots,m. \label{eq:toconfirm}
	\end{equation}
	Since $\left\{\mathcal{P}_n\right\}^\infty_{n=1}$ are consecutive refinements, observe that $g^{(j)}_n = g^{(j)} $ for $n\geq m$, which implies: 
	\begin{eqnarray*}    \sum^{k}_{l = -k}  \left\| \K^l g^{j} - \K^l_n g^{(j)}_n \right\|^2  & = &   \sum^{k}_{l = -k}   \left\| (\K^l - \K^l_n) g^{(j)} \right\|^2, \qquad \mbox{if } n\geq m \\
		& = &   \sum^{k}_{l = -k} \mu( T^{-l}(A^{(j)}) \Delta T^{-l}_n(A^{(j)}) ).
	\end{eqnarray*}
	By  \cref{thm:important},  $\left\{ T^{-l}_n(A^{(j)}) )\right\}^{\infty}_{n=m}$ converges to $ T^{-l}(A^{(j)})$ in the Haussdorf metric. Also, since $\mu(T^{-l}(A^{(j)})) = \mu(T^{-l}_n(A^{(j)}) )$ for $n\geq m$, it follows from \cref{lemm:measureconv} that:
	$$   \sum^{k}_{l = -k} \mu( T^{-l}(A^{(j)}) \Delta T^{-l}_n(A^{(j)}) ) \rightarrow 0 \qquad\mbox{as } n\rightarrow \infty. $$
	This completes the proof for observables of the type \eqref{eq:simplefunctions}. For the general case, simply note that we can express $g_m = \mathcal{W}_m g$ and then perform the following manipulations:
	\begin{eqnarray*}
	 	\sum^{k}_{l = -k} \left\| \K^l g - \K^l_n g_n \right\|^2 & = &  \sum^{k}_{l = -k} \left\| \K^l (g-g_m + g_m) - \K^l_n (g-g_m + g_m)_n \right\|^2 \\
		& \leq &  \sum^{k}_{l = -k} 	 \left( \left\|g-g_m \right\| + \left\| \mathcal{W}_n (g-g_m) \right\| + \left\| \K^l g_m - \K^l_n (g_m)_n \right\| \right)^2 \\
			& \leq &   \sum^{k}_{l = -k} 	 \left( 2 \left\|g-g_m \right\| + \left\| \K^l g_m - \K^l_n (g_m)_n \right\| \right)^2 \\
		& \leq & \left(   \left(  \sum_{l=-k}^{k}  4 \left\|g-g_m \right\|^2 \right)^{\frac{1}{2}}  + \left(  \sum_{l=-k}^{k}  \left\| \K^l g_m - \K^l_n (g_m)_n \right\|^2 \right)^{\frac{1}{2}}   \right)^2\\
		& = &  \left(    2 \left\|g-g_m \right\|   + \left(  \sum_{l=-k}^{k}  \left\| \K^l g_m - \K^l_n (g_m)_n \right\|^2 \right)^{\frac{1}{2}}   \right)^2
	\end{eqnarray*}
	Given our previously established result, in the above inequality, we can make both terms arbitrarily small by choosing a sufficiently large $m,n\in\mathbb{N}$.
\end{proof}

\section{Spectral convergence} \label{sec:spectralconv}
In this section, we will analyze how  spectral projectors \eqref{eq:targetapprx} converge to \eqref{eq:target} in the limit. 
\subsection{An illuminating example}
Before we proceed to the general results, let us work out the details of a periodic approximation for a basic example in order to clarify certain subtelties on weak vs. strong convergence of the spectra.  In particular, consider the map:
$$ T(x) =  (x + \frac{1}{2})  \mod{1}, \quad x\in [0,1),$$
which is rotation on the circle by a half. 

For the partition  $\mathcal{P}_{n} = \left\{p_{n, 1}, p_{n, 2}, \ldots, p_{n, r^n} \right\}$ with  $p_{n, j} = \left(\frac{j-1}{r^n} ,\frac{j}{r^n} \right)$, $j=1,\ldots,r^n$ for some integer $r>1$, we may define the map\footnote{$\lfloor \cdot \rfloor$ denotes here the floor function.}:
$$	
T_{n}(p_{n, j}) =  p_{n, j^*}, \qquad j^* = \begin{cases} j + \lfloor \frac{r^n}{2} \rfloor  &  j + \lfloor \frac{r^n}{2} \rfloor \leq r^n \\ j + \lfloor \frac{r^n}{2} \rfloor - r^n   &  j + \lfloor \frac{r^n}{2} \rfloor > r^n    \end{cases},
$$
which clearly is a periodic approximation to the original transformation in the sense of \cref{thm:important}. The discrete Koopman operator associated with this map is isometric to the circulant matrix. That is, the permutation matrices:
$$  [\mathrm{U}_n]_{ij} = \langle \K_n \chi_{p_{n,i}},  \chi_{p_{n,j}}  \rangle, $$
are equal to the circulant matrices:
$$ \mathrm{U}_n = \begin{bmatrix} d_1 & d_{r^n} & \cdots & d_3 & d_2 \\ d_2 & d_1 & d_{r^n} &  & d_3 \\ \vdots & d_2 & d_1 & \ddots & \vdots \\ d_{r^n-1} & & \ddots & \ddots & d_{r^n} \\ d_{r^n} & d_{r^n-1} & \cdots & d_2 & d_1  \end{bmatrix}, \quad \mbox{where $d_{\lfloor \frac{r^n}{2}\rfloor+1} = 1$ and zero otherwise}. $$
The spectral decomposition of a circulant matrix can be obtained in closed-form using the Discrete Fourier Transform. In our specific case, we have\footnote{The scaling $1/\sqrt{r^n}$ here is required for normalization.}:
\begin{equation} 	v_{n, k}(x) = \frac{1}{\sqrt{r^n}} \sum_{j=1}^{r^n}  e^{2\pi i\frac{ (k-1)(j-1)}{r^n}}   \chi_{p_{n,j}}(x), \qquad  \theta_{n, k} = \begin{cases} \frac{(-1)^{k} +1}{2} \pi & r \mbox{ is even} \\ \frac{(-1)^{k} +1}{2} \pi - \frac{k-1}{r^n} \pi & r \mbox{ is odd} \end{cases}. \label{eq:discretevectors}
\end{equation}
Recall that the spectra of the true (i.e. infinite-dimensional) operator consists of only two eigenvalues located at $1$ and $-1$. Yet, from the equations above, we see that this property is maintained for the discrete analogue when $r$ is an even number. For an odd $r$, the eigenvalues of the discrete Koopman operators seem to densely fill up the unit circle as $n\rightarrow \infty$. 

At first sight, this fragility of the spectrum in the discretization appears as a serious problem. However, if we weaken our notion of what it means to converge spectrally, this issue can be avoided. A closer examination of the eigenvalue-eigenvector pairs $ \left\{\theta_{n, k}, v_{n, k} \right\}^{r^n}_{k=1}$ in \eqref{eq:discretevectors} provide clues on what approach should be taken.
After application of the smoothing operator \eqref{eq:averageoperator}, any square-integrable observable  $g\in L^2_n(\mathbb{T}, \mathcal{B}(\mathbb{T}), \mu)$ on the circle (with $\mu$ being the standard Lebesgue measure in this case), can be written in terms of the eigenvectors: 
$$g_n(x) =  \sum^{r^n}_{k=1} c_{n,k} v_{n, k} (x), $$
where $v_{n, k}\in L^2_n(\mathbb{T}, \mathcal{B}(\mathbb{T}), \mu)$ are nothing else but the discrete analogues of the Fourier harmonics (see \eqref{eq:discretevectors}). Let $d(\theta_{n,k},\{-\pi,0\}) $ denote the distance of $\theta_{n,k}$ to the points $-\pi$ and $0$ (i.e the locations of the true eigenvalues) on the circle and fix  $\delta>0$. Then for any $\epsilon>0$, there exists a $N\in\mathbb{N}$ such that:
$$  \sum_{d(\theta_{n,k},\{-\pi,0\})<\delta} |c_{n,k}|^2 <\epsilon, \qquad \forall n\geq N. $$
In other words, as $n$ approaches infinity, most of ``spectral energy'' will get concentrated around the eigenvalue points (see \cref{fig:illustration}). This observation is not particular to this simple example, but applies more generally.

\begin{figure}
\includegraphics[width=.3\textwidth,trim = {10cm 0cm 10cm 0cm}, clip]{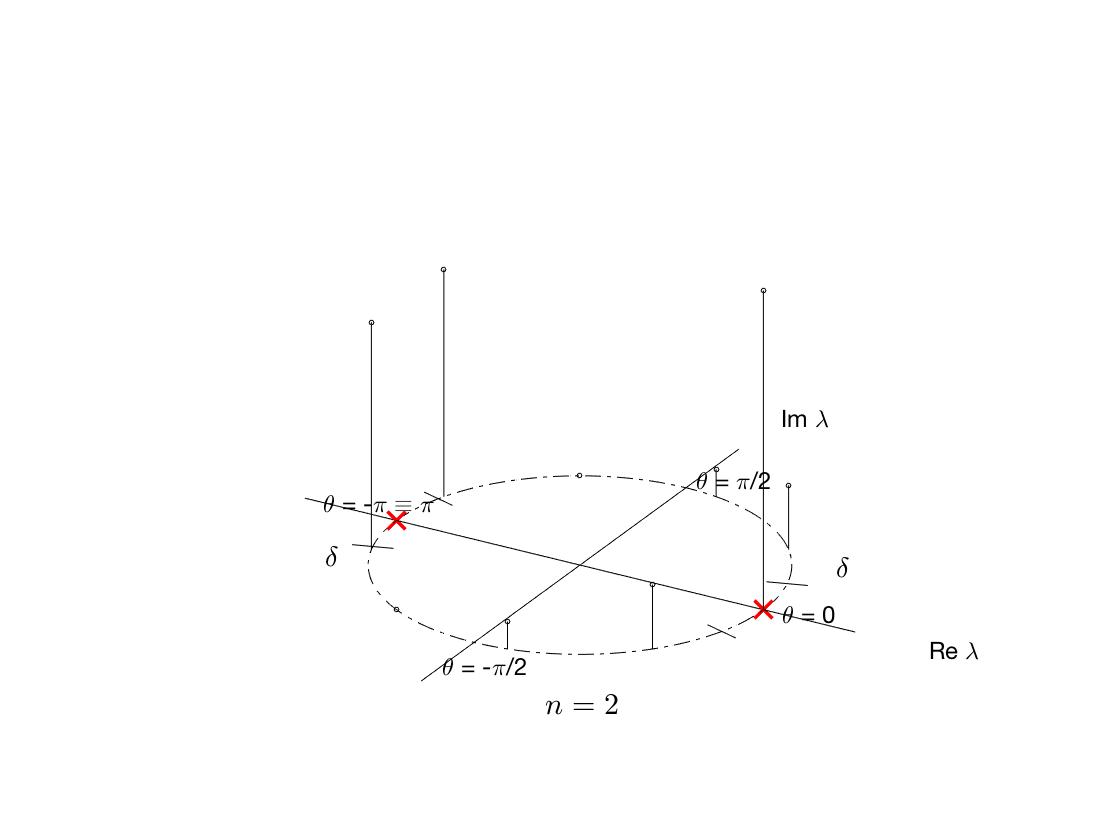} \includegraphics[width=.3\textwidth,trim = {10cm 0cm 10cm 0cm}, clip]{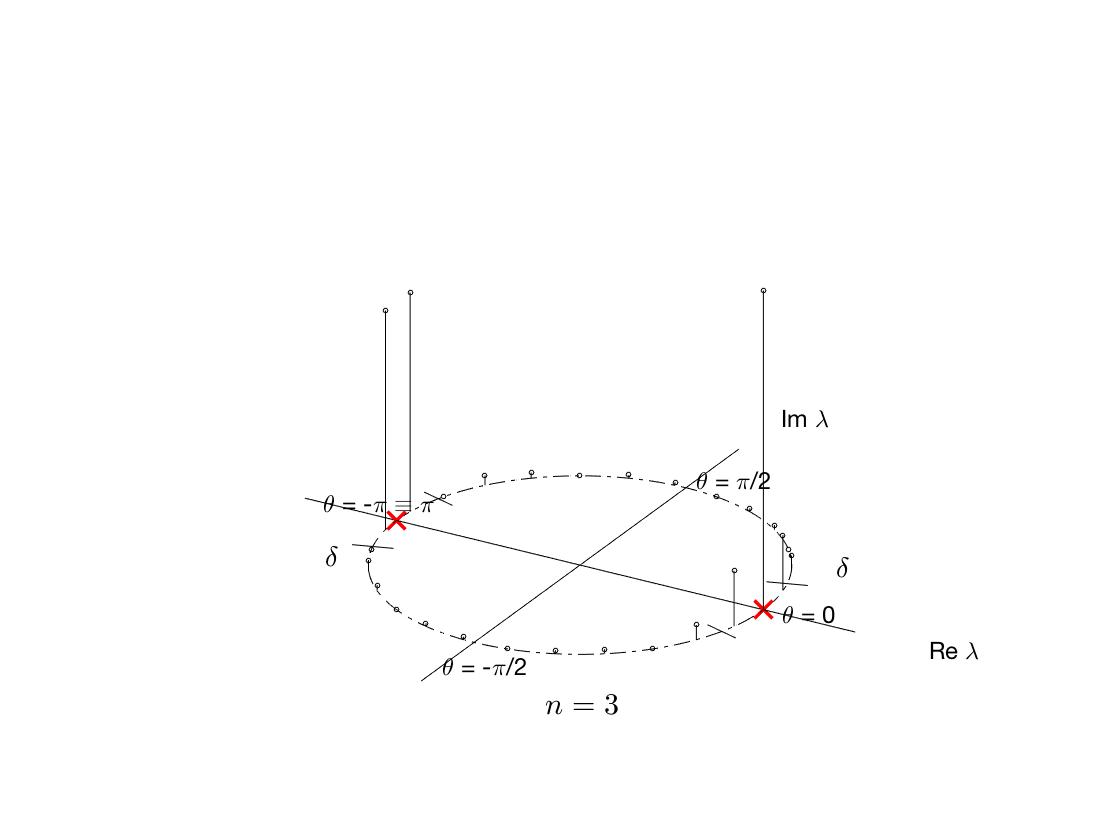} \includegraphics[width=.3\textwidth,trim = {10cm 0cm 10cm 0cm}, clip]{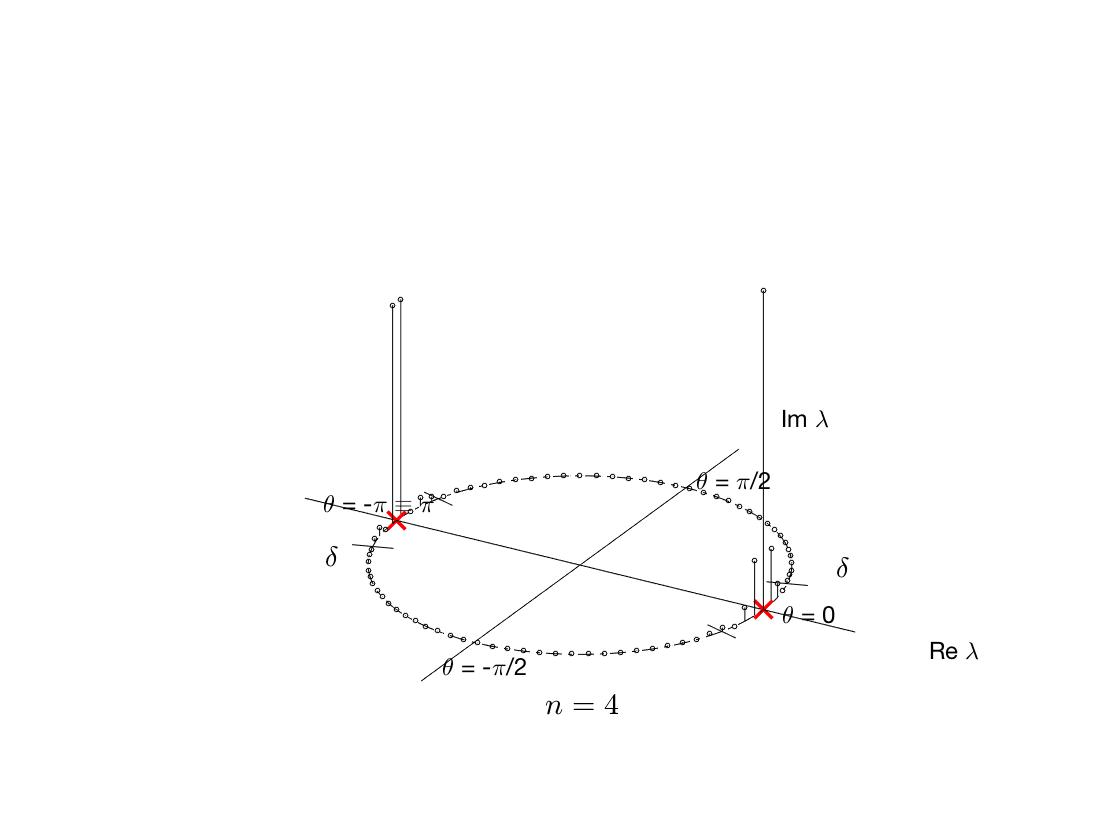}
\caption{Weak convergence of the spectra.} \label{fig:illustration}
\end{figure}

\subsection{Approximation of the spectral projectors} 
Consider any smooth test function $\varphi\in\mathcal{D}([-\pi,\pi))$ on the circle, and define:
$$ \Proj_{\varphi} g = \int_{-\pi}^{\pi} \varphi (\theta) \dd\Proj({\theta}) g,\qquad  \Proj_{n, \varphi} g_n =  \displaystyle \sum_{k=1}^{q(n)} \varphi (\theta_{n, k}) \Proj_{n} (\theta_{n, k}) g_n.$$
We will prove the following.
\begin{theorem} \label{thm:testfunction}
		Let $T:X\mapsto X$ satisfy the hypothesis of \cref{thm:important} and suppose that   $\left\{T_{n}: \mathcal{P}_{n} \mapsto \mathcal{P}_{n} \right\}^{\infty}_{n=1}$ is a sequence of discrete maps that periodically approximates $T$ in the sense of \eqref{eq:setconvergence}. For any smooth test function $\varphi\in\mathcal{D}([-pi,\pi))$ and observable $g \in L^2(X,\mathcal{M},\mu)$, we have:
		$$ \lim_{n\rightarrow \infty }\left\| \Proj_{\varphi} g - \Proj_{n,\varphi} g_n \right\|   = 0.$$
\end{theorem}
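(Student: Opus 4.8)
The plan is to reduce the statement to the operator convergence of \cref{thm:opconvergence} by exploiting the Fourier expansion of the smooth test function. Since $\varphi$ is smooth on the circle, its Fourier coefficients $\hat\varphi_l := \frac{1}{2\pi}\int_{-\pi}^{\pi}\varphi(\theta)e^{-il\theta}\,\dd\theta$ decay rapidly; in particular $\sum_{l\in\mathbb Z}|\hat\varphi_l|<\infty$ and the partial sums of $\varphi(\theta)=\sum_{l\in\mathbb Z}\hat\varphi_l e^{il\theta}$ converge uniformly. This absolute summability is the whole point, and it is exactly what the smoothness hypothesis buys us.

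First I would establish the two Fourier representations
\begin{equation*}
\Proj_\varphi g \;=\; \sum_{l\in\mathbb Z}\hat\varphi_l\,\K^l g,
\qquad
\Proj_{n,\varphi} g_n \;=\; \sum_{l\in\mathbb Z}\hat\varphi_l\,\K^l_n g_n,
\end{equation*}
both series converging absolutely in $L^2$ because $\K$ and $\K_n$ are unitary, so $\|\K^l g\|=\|g\|$ and $\|\K^l_n g_n\|=\|g_n\|\le\|g\|$ (the last bound since $\mathcal W_n$ is an orthogonal projector), together with $\sum_l|\hat\varphi_l|<\infty$. For the first identity, the map $\psi\mapsto\int_{-\pi}^{\pi}\psi(\theta)\,\dd\Proj(\theta)g$ is contractive up to a factor $\|g\|$ (since $\|\int\psi\,\dd\Proj\,g\|^2=\int|\psi|^2\,\dd\langle\Proj_\theta g,g\rangle\le\|\psi\|_\infty^2\|g\|^2$), so the uniformly convergent Fourier partial sums of $\varphi$ pass through the spectral integral, and each monomial contributes $\int e^{il\theta}\,\dd\Proj(\theta)g=\K^l g$ by \eqref{eq:decomposition}. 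For the second identity one substitutes the pointwise-convergent series $\varphi(\theta_{n,k})=\sum_l\hat\varphi_l e^{il\theta_{n,k}}$ into the \emph{finite} sum defining $\Proj_{n,\varphi}g_n$, interchanges the finite sum over $k$ with the convergent series over $l$, and recognizes $\sum_{k=1}^{q(n)}e^{il\theta_{n,k}}\Proj_n(\theta_{n,k})g_n=\K^l_n g_n$ from \eqref{eq:eigdecomp}.

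With these in hand, subtract and split the series at a cutoff $k\in\mathbb N$:
\begin{equation*}
\bigl\|\Proj_\varphi g-\Proj_{n,\varphi}g_n\bigr\|
\;\le\;\sum_{|l|\le k}|\hat\varphi_l|\,\bigl\|\K^l g-\K^l_n g_n\bigr\|
\;+\;2\|g\|\sum_{|l|>k}|\hat\varphi_l|.
\end{equation*}
Given $\epsilon>0$, use $\sum_l|\hat\varphi_l|<\infty$ to fix $k$ so large that the tail term is $<\epsilon/2$. For the head, Cauchy--Schwarz gives
\begin{equation*}
\sum_{|l|\le k}|\hat\varphi_l|\,\bigl\|\K^l g-\K^l_n g_n\bigr\|
\;\le\;\Bigl(\sum_{|l|\le k}|\hat\varphi_l|^2\Bigr)^{1/2}\Bigl(\sum_{l=-k}^{k}\bigl\|\K^l g-\K^l_n g_n\bigr\|^2\Bigr)^{1/2},
\end{equation*}
and by \cref{thm:opconvergence} the second factor tends to $0$ as $n\to\infty$, so there is an $N$ with the head $<\epsilon/2$ for all $n\ge N$. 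Combining the two bounds completes the argument.

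I expect the only genuinely delicate point to be the justification of the Fourier representation of $\Proj_\varphi g$, i.e.\ commuting the operator-valued spectral integral with the infinite Fourier sum; but here the smoothness of $\varphi$ makes the interchange routine via absolute and uniform convergence, and everything else is the standard two-$\epsilon/2$ split resting on \cref{thm:opconvergence}. This also explains why the theorem is phrased for smooth $\varphi$ rather than for indicators of intervals: for an indicator no such reduction is available and a substantially finer, $\delta$-fattening analysis would be needed instead.
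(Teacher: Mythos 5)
Your proposal is correct and follows essentially the same route as the paper's proof: expand $\varphi$ in its absolutely summable Fourier series, identify $\Proj_\varphi g=\sum_l \hat\varphi_l\K^l g$ and $\Proj_{n,\varphi}g_n=\sum_l\hat\varphi_l\K^l_n g_n$, split at a cutoff $k$, bound the tail by $2\|g\|\sum_{|l|>k}|\hat\varphi_l|$, and dispatch the head with \cref{thm:opconvergence}. Your justification of the interchange of the spectral integral with the Fourier sum, and your explicit Cauchy--Schwarz step linking the head to the squared-norm sum of \cref{thm:opconvergence}, are slightly more careful than the paper's but not substantively different.
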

\begin{proof}
	Expand the smoothed indicator function $\varphi (\theta)$ by its Fourier series:
	$ \varphi (\theta) = \sum_{l=-\infty}^{\infty}  b_l e^{il\theta}, $
	and note that the series is uniformly convergent (a consequence of $\varphi (\theta)$ being smooth). We see that:
	$$ \Proj_{\varphi} g =\int_{-\pi}^{\pi} \left( \sum_{l=-\infty}^{\infty}  b_l e^{il\theta} \right) \dd\Proj ({\theta}) g \\
	= \sum_{l=-\infty}^{\infty} b_l \left( \int_{-\pi}^{\pi} e^{il\theta}  \dd\Proj({\theta}) g  \right) \\
	=  \sum_{l=-\infty}^{\infty} b_l  \K^l g, $$
	where we employed the spectral theorem of unitary operators \cite{Akhiezer1963} in the last equality. Similarly, it also holds that:
	$$  \Proj_{n, \varphi} g_n =   \sum_{l=-\infty}^{\infty} b_l  \K^l_n g_n $$
	Hence,
	$$ \Proj_{\varphi} g -  \Proj_{n, \varphi} g_n = \sum_{l=-\infty}^{\infty} b_l (\K^l g - \K^l_n g_n) $$
	Now let $\epsilon> 0$ and choose $k\in \mathbb{N}$ such that:
	$$ \sum_{|l|>k} |b_l| < \frac{\epsilon}{ 4 \left\| g \right\| } $$
	This is possible, because the Fourier coefficients of $\varphi (\theta)$ are absolutely summable (again, a consequence of $\varphi (\theta)$ being smooth). Write:
	\begin{eqnarray*}
		\left\| \Proj_{\varphi} g -  \Proj_{n, \varphi} g_n \right\| & \leq &  \sum_{l=-\infty}^{\infty} |b_l| \left\| \K^l g - \K^l_n g_n  \right\| \\ 
		& = & \sum_{l=-k}^{k}|b_l| \left\| \K^l g - \K^l_n g_n  \right\|  + \sum_{|l|>k}  |b_l| \left\| \K^l g - \K^l_n g_n  \right\| \\
		& \leq &  M \sum_{l=-k}^{k} \left\| \K^l g - \K^l_n g_n  \right\|  + 2  \left\| g \right\|  \sum_{|l|>k}  |b_l|  \\
		& \leq &  M \sum_{l=-k}^{k} \left\| \K^l g - \K^l_n g_n  \right\|  + \frac{\epsilon}{2}   \\
	\end{eqnarray*}
	For a fixed $k\in \mathbb{N}$, we can choose by \cref{thm:opconvergence} there exist an $N\in \mathbb{N}$ so that\footnote{Note that the squaring of the norms in \cref{thm:opconvergence} is immatarial for a finite sum.}
	$$\sum_{l=-k}^{k} \left\| \K^l g - \K^l_n g_n  \right\| \leq \frac{\epsilon}{2 M}, \qquad \forall n\geq N.$$
	This yields:
	$$ \left\| \Proj_{\varphi} g -  \Proj_{n, \varphi} g_n \right\| \leq \epsilon.$$
\end{proof}
\begin{remark}
	Note that in the proof of \cref{thm:testfunction}, we explicitly  made use of the fact that the discrete operators \eqref{eq:discreteoperator} are unitary, which in turn is a consequence of the periodic approximation. Therefore, our arguments would break down if the periodic approximation was replaced with a many-to-one map.
\end{remark}
\begin{remark}
	Observe that \cref{thm:testfunction} only concerns approximation of $\Proj_{\varphi}$ within the cyclic space generated by $g$ and not in its entirety. That is, we do \emph{not} have $\left\| \Proj_{\varphi}  - \Proj_{n,\varphi} \right\|\rightarrow 0$, but only $\left\| \Proj_{\varphi} g - \Proj_{n,\varphi} g_n \right\|\rightarrow 0$ for some specific $g$.
\end{remark}	
Recall the spectral projectors \eqref{eq:target} and \eqref{eq:targetapprx}, and notice that they may be re-expressed as:
$$ \Proj_D g = \int_{-\pi}^{\pi} \chi_D (\theta) \dd\Proj({\theta}) g,\qquad  \Proj_{n, D} g_n =  \displaystyle \sum_{k=1 }^{q(n)} \chi_D (\theta_{n, k}) \Proj_{n}( \theta_{n, k}) g_n,$$
where $\chi_D (\theta)$ is an indicator function on the circle for the interval $D$. Consider a smoothed version of the projectors using the summability kernels \cite{katznelson2004introduction}. That is, for some $0<\alpha<2\pi$, define $\varphi_{\alpha}: [-\pi,\pi) \times [-\pi,\pi) \mapsto \mathbb{R}_+$:
\begin{equation}\varphi_{\alpha}(x,y) = \begin{cases}   \frac{K}{\alpha} \exp\left({\frac{-1}{1- \left(\frac{d(x,y)}{\alpha} \right)^2 }}\right) &  \frac{d(x,y)}{\alpha} < 1    \\
0 & \mbox{otherwise}                          
\end{cases}, \label{eq:summkernel}
\end{equation}
where $d(x,y)$ is the Euclidian metric on $[-\pi,\pi)$ and $K=(\int^{1}_{-1} \exp(\tfrac{-1}{1-x^2})  \dd x)^{-1}$. Now replace the indicator function with: 
$$\chi_{D_{\alpha}} (\theta) = \int_{-\pi}^{\pi} \varphi_{\alpha}(\theta,\xi)  \chi_D(\xi) \dd \xi,$$ 
and define:
\begin{equation}
\Proj_{D_{\alpha}} = \int_{-\pi}^{\pi} \chi_{D_{\alpha}} (\theta) \dd\Proj({\theta}) g,\qquad ,\qquad  \Proj_{n, {D_{\alpha}}} g_n =  \displaystyle \sum_{k=1 }^{q(n)} \chi_{D_{\alpha}} (\theta_{n, k}) \Proj_{n} (\theta_{n, k}) g_n.
\end{equation}
We have the following corollary.
\begin{corollary}[Convergence of spectral projectors] \label{thm:projectorconvergence}
	Let $T:X\mapsto X$ satisfy the hypothesis of \cref{thm:important} and suppose that   $\left\{T_{n}: \mathcal{P}_{n} \mapsto \mathcal{P}_{n} \right\}^{\infty}_{n=1}$ is a sequence of discrete maps that periodically approximates $T$ in the sense of \eqref{eq:setconvergence}. Given any $0<\alpha<2\pi$ and interval $D \subset [-\pi,\pi)$, it follows that:
	$$  \lim_{n\rightarrow \infty }\left\| \Proj_{D_{\alpha}} g - \Proj_{n,D_{\alpha}} g_n \right\|   = 0, $$
	where $g \in L^2(X,\mathcal{M},\mu)$. 
\end{corollary}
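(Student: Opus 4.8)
The plan is to reduce this corollary to \cref{thm:testfunction} by recognising that the smoothed indicator $\chi_{D_\alpha}$ is itself a legitimate smooth test function on the circle, so that the general statement about $\Proj_\varphi$ applies verbatim with $\varphi := \chi_{D_\alpha}$.

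First I would check that the kernel $\varphi_\alpha(x,y)$ defined in \eqref{eq:summkernel} is $C^\infty$ jointly in $(x,y)$. It depends on $x,y$ only through $d(x,y)^2$ (the argument of the exponential is $-1/(1-(d(x,y)/\alpha)^2)$), and $d(x,y)^2$ is smooth; on the edge of the support $\{d(x,y)=\alpha\}$ the exponential together with all of its derivatives tends to $0$, so the two branches of the piecewise definition glue to a genuinely $C^\infty$ function. (That $\int_{-\pi}^{\pi}\varphi_\alpha(x,\xi)\,\dd\xi$ is independent of $x$, by translation invariance, is not needed but confirms that $\varphi_\alpha$ acts as a bona fide mollifier on the circle.) Consequently $\chi_{D_\alpha}(\theta)=\int_{-\pi}^{\pi}\varphi_\alpha(\theta,\xi)\chi_D(\xi)\,\dd\xi$ is $2\pi$-periodic and smooth: periodicity is inherited from that of $\varphi_\alpha$ in its first slot, and smoothness follows by differentiating under the integral sign, which is justified since $\chi_D$ is bounded and measurable while $\theta\mapsto\partial_\theta^{(k)}\varphi_\alpha(\theta,\xi)$ is continuous and bounded uniformly in $\xi$ over the compact circle. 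Hence $\chi_{D_\alpha}\in\mathcal{D}([-\pi,\pi))$.

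With this identification in place, the definitions directly above \cref{thm:testfunction} give $\Proj_{D_\alpha}g=\Proj_{\chi_{D_\alpha}}g$ and $\Proj_{n,D_\alpha}g_n=\Proj_{n,\chi_{D_\alpha}}g_n$. Applying \cref{thm:testfunction} with $\varphi=\chi_{D_\alpha}$ and the given $g\in L^2(X,\mathcal{M},\mu)$ then yields $\left\|\Proj_{D_\alpha}g-\Proj_{n,D_\alpha}g_n\right\|\to 0$ as $n\to\infty$, which is exactly the assertion. The only substantive step is the first one — verifying that $\varphi_\alpha$, and therefore its circular convolution with the indicator $\chi_D$, is genuinely $C^\infty$ rather than merely continuous; the subtlety lies in the flat vanishing of the bump at the boundary of its support and in the smoothness of $d(x,y)^2$ there. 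Once that is granted, the remainder is an immediate invocation of the already-established \cref{thm:testfunction}.
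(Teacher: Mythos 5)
Your proposal is correct and is exactly the argument the paper intends: the corollary is stated without proof precisely because $\chi_{D_\alpha}$, being the circular convolution of the $C^\infty$ bump $\varphi_\alpha$ with $\chi_D$, is a smooth test function, so \cref{thm:testfunction} applies verbatim with $\varphi=\chi_{D_\alpha}$. Your explicit verification of the smoothness (flat vanishing at the edge of the bump's support, differentiation under the integral) is the only nontrivial step and is carried out correctly — indeed more carefully than the paper, which omits it entirely.
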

\begin{remark}
	In general, the smoothing of the indicator functions cannot be avoided. If the operator happens to have discrete spectra on the boundaries of the inverval $D \subset [-pi,\pi)$, then the statement $\left\| \Proj_{D} g - \Proj_{n,D} g_n \right\|   \rightarrow 0$ as $n\rightarrow\infty$ is simply false. This is also the reason why smoothness conditions have to imposed onto $\varphi$ in \cref{thm:testfunction}
\end{remark}

\subsection{Approximation of the spectral density function}

Recall the definitions of the spectral density function $\rho(\theta;g)\in \mathcal{D}^{*}([-pi,\pi))$, along with its discrete analogue \eqref{eq:discrete}:
$$
	\rho_n(\theta;g_n) =  \sum_{k=1}^{q(n)} \left\| \Proj_{n} (\theta_{n, k}) g_n \right\|^2 \delta(\theta - \theta_{n,k})
$$
To assess the convergence of $\rho_n(\theta;g_n)$ to $\rho(\theta;g)$, we again make use of the summability kernels \eqref{eq:summkernel}.
The following result may be established. 
\begin{theorem}[Approximation of the spectral density function]
	Let:
	\begin{equation} \rho_{\alpha}(\theta; g) :=  \int_{-\pi}^{\pi} \varphi_{\alpha}(\theta,\xi)  \rho(\xi; g)  \dd \xi, \qquad \rho_{\alpha,n}(\theta; g_n) := \int_{-\pi}^{\pi} \varphi_{\alpha}(\theta, \xi)  \rho_{n}(\xi; g_n)  \dd \xi.  \label{eq:densityapprx}
	\end{equation}
	Then:
$$\lim_{n\rightarrow\infty} \rho_{\alpha,n}(\theta; g_n)  =  \rho_{\alpha}(\theta; g),\quad\mbox{uniformly.}$$
\end{theorem}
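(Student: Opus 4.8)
The plan is to reuse the Fourier--series mechanism of \cref{thm:testfunction}, now applied to the parametrized test function $\varphi_{\alpha}(\theta,\cdot)$, while keeping track of uniformity in $\theta$.

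First I would rewrite both quantities as inner products. Since the spectral cumulative function $c(\xi;g)=\langle\Proj_{[-\pi,\xi)}g,g\rangle$ is monotone and its distributional derivative $\rho(\xi;g)\,\dd\xi$ is precisely the positive spectral measure $\dd\langle\Proj_{[-\pi,\xi)}g,g\rangle$, the spectral theorem gives
$$\rho_{\alpha}(\theta;g)=\int_{-\pi}^{\pi}\varphi_{\alpha}(\theta,\xi)\,\dd\langle\Proj_{[-\pi,\xi)}g,g\rangle=\bigl\langle\Proj_{\varphi_{\alpha}(\theta,\cdot)}g,\,g\bigr\rangle.$$
On the discrete side, each $\Proj_{n}(\theta_{n,k})$ is a self-adjoint idempotent, so $\|\Proj_{n}(\theta_{n,k})g_n\|^{2}=\langle\Proj_{n}(\theta_{n,k})g_n,g_n\rangle$, and the definition \eqref{eq:discrete} of $\rho_n$ gives $\rho_{\alpha,n}(\theta;g_n)=\sum_{k}\varphi_{\alpha}(\theta,\theta_{n,k})\langle\Proj_{n}(\theta_{n,k})g_n,g_n\rangle=\langle\Proj_{n,\varphi_{\alpha}(\theta,\cdot)}g_n,g_n\rangle$. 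Hence the theorem amounts to showing that $\sup_{\theta}\bigl|\langle\Proj_{\varphi_{\alpha}(\theta,\cdot)}g,g\rangle-\langle\Proj_{n,\varphi_{\alpha}(\theta,\cdot)}g_n,g_n\rangle\bigr|$ tends to $0$.

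Next I would expand, for each fixed $\theta$, the smooth $2\pi$-periodic function $\varphi_{\alpha}(\theta,\cdot)=\sum_{l\in\mathbb{Z}}b_l(\theta)e^{il\,\cdot}$. Because $\varphi_{\alpha}$ from \eqref{eq:summkernel} is $C^{\infty}$ on the circle, integrating by parts repeatedly in the second variable gives bounds $\sup_{\theta}|b_l(\theta)|\le C_m|l|^{-m}$ for every $m$ with constants independent of $\theta$, so $B_l:=\sup_{\theta}|b_l(\theta)|$ is summable. Using the representation established inside the proof of \cref{thm:testfunction}---namely $\Proj_{\varphi}g=\sum_l b_l\K^l g$ and $\Proj_{n,\varphi}g_n=\sum_l b_l\K^l_n g_n$ for a smooth $\varphi$ with Fourier coefficients $b_l$---I would obtain
$$\rho_{\alpha}(\theta;g)-\rho_{\alpha,n}(\theta;g_n)=\sum_{l\in\mathbb{Z}}b_l(\theta)\bigl(\langle\K^l g,g\rangle-\langle\K^l_n g_n,g_n\rangle\bigr).$$
Splitting $\langle\K^l g,g\rangle-\langle\K^l_n g_n,g_n\rangle=\langle\K^l g-\K^l_n g_n,g\rangle+\langle\K^l_n g_n,g-g_n\rangle$ and using $\|\K^l_n g_n\|=\|g_n\|\le\|g\|$ (from unitarity of $\K_n$ and $\mathcal{W}_n$ being an orthogonal projector) gives the pointwise bound $|\langle\K^l g,g\rangle-\langle\K^l_n g_n,g_n\rangle|\le\|g\|(\|\K^l g-\K^l_n g_n\|+\|g-g_n\|)$, together with the crude $\theta$- and $n$-independent bound $|\langle\K^l g,g\rangle-\langle\K^l_n g_n,g_n\rangle|\le 2\|g\|^{2}$.

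Finally I would run the standard tail/head split, noting that neither resulting piece depends on $\theta$. Given $\varepsilon>0$, first choose $k$ with $2\|g\|^{2}\sum_{|l|>k}B_l<\varepsilon/2$; then, using $\|g-g_n\|\to0$ together with $\sum_{|l|\le k}\|\K^l g-\K^l_n g_n\|\to0$ from \cref{thm:opconvergence}, choose $N$ so that for $n\ge N$ the ``head'' $\|g\|\sum_{|l|\le k}B_l(\|\K^l g-\K^l_n g_n\|+\|g-g_n\|)<\varepsilon/2$. Summing the two estimates gives $\sup_{\theta\in[-\pi,\pi)}|\rho_{\alpha}(\theta;g)-\rho_{\alpha,n}(\theta;g_n)|<\varepsilon$ for all $n\ge N$, which is the asserted uniform convergence. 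I expect the main obstacle to be the bookkeeping in the previous step, and in particular the claim that the Fourier coefficients $b_l(\theta)$ of $\varphi_{\alpha}(\theta,\cdot)$ decay rapidly \emph{uniformly} in $\theta$ (equivalently, that $\varphi_{\alpha}$ is genuinely a smooth kernel on the circle); once that is granted the uniformity of the limit is automatic, because both the head and tail bounds above are $\theta$-free, and the remaining manipulations are a direct transcription of the proof of \cref{thm:testfunction}.
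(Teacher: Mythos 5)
Your proof is correct, and it reaches the conclusion by a more direct route than the paper. Both arguments share the same core ingredients: writing $\rho_{\alpha}(\theta;g)=\langle \Proj_{\varphi_{\alpha}(\theta,\cdot)}g,g\rangle$ and $\rho_{\alpha,n}(\theta;g_n)=\langle \Proj_{n,\varphi_{\alpha}(\theta,\cdot)}g_n,g_n\rangle$, observing that the Fourier coefficients of the difference factor as $d_{\alpha}(l)\bigl(\langle \K^l g,g\rangle-\langle \K^l_n g_n,g_n\rangle\bigr)$ with $d_{\alpha}(l)$ rapidly decaying (uniformly in $\theta$, since the kernel is a convolution kernel), the crude $n$-independent bound $2\|g\|^2$ on each moment difference, and \cref{thm:opconvergence} for the head. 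Where you diverge is in how uniform convergence is extracted: the paper runs a two-step argument, first proving equicontinuity of the family $\rho_{\alpha}(\cdot;g)-\rho_{\alpha,n}(\cdot;g_n)$ by uniformly bounding its derivative via $\sum_l |l|\,|d_{\alpha}(l)|$, then proving $L^2$ convergence via Parseval, and finally combining the two by a contradiction argument. You instead observe that the $\ell^1$ head/tail estimate on the Fourier series is already $\theta$-free, so the sup-norm bound falls out in one pass. Your version is shorter and avoids the equicontinuity machinery entirely; the paper's version buys nothing extra here (the $L^2$-plus-equicontinuity route would matter only if the coefficients were merely square-summable rather than absolutely summable, which is not the case for a $C^{\infty}$ kernel). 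The one point worth stating explicitly in a final write-up is the identity $b_l(\theta)=e^{-il\theta}d_{\alpha}(l)$, which is what makes $\sup_{\theta}|b_l(\theta)|=|d_{\alpha}(l)|$ and hence the uniformity immediate.
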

\begin{proof} To prove uniform convergence, we will establish that: (i) $\rho_{\alpha}(\theta; g) - \rho_{\alpha,n}(\theta;g_n)$ forms an equicontinuous family, and (ii) $\rho_{\alpha,n}(\theta;g_n)$ converges to $\rho_{\alpha}(\theta;g)$ in the $L^2$-norm.	 Uniform convergence of $\rho_{\alpha,n}(\theta;g_n)$ to $\rho_{\alpha}(\theta;g)$ is an immediate consequence of these facts. 
Indeed, if this was not the case, there would exist an $\epsilon>0$ and a subsequence $n_k$ such that $|\rho_{\alpha}(\theta_k;g) - \rho_{\alpha,n_k}(\theta_k;g_{n_k}) |\geq \epsilon$ for all $k\in \mathbb{N}$. But by equicontinuity, we may choose a $\delta>0$ such that: $$d\left( \rho_{\alpha}(\phi;g) - \rho_{\alpha,n}(\phi; g_{n}) , \rho_{\alpha}(\theta;g) - \rho_{\alpha,n}(\theta; g_{n}) \right) < \epsilon/2,\quad \mbox{whenever } d(\phi,\theta)<\delta.$$
This leads to a contradiction to (ii) as $|| \rho_{\alpha}(\cdot;g) - \rho_{\alpha,n_k}(\cdot; g_{n_k})  ||_2 \geq \epsilon/2 \sqrt{\delta}$. What follows next is a derivation of the claims (i) and (ii):
\begin{enumerate}[(i)]
	\item To show that $\rho_{\alpha}(\theta; g) - \rho_{\alpha,n}(\theta;g_n)$ is an equicontinuous family, we will confirm that its derivative $\rho^{'}_{\alpha}(\theta; g) - \rho^{'}_{\alpha,n}(\theta;g_n)$ is uniformly bounded. Consider the fourier expansion of $\rho_{\alpha}(\theta; g) - \rho_{\alpha,n}(\theta;g_n)$:
	$$ \rho_{\alpha}(\theta; g) - \rho_{\alpha,n}(\theta;g_n) = \frac{1}{2\pi}\sum_{l\in\mathbb{Z}} b_n(l;g) e^{il\theta} ,\qquad b_n(l;g) :=   \int_{-\pi}^{\pi} e^{-il\theta}  \left( \rho_{\alpha}(\theta;g)  -   \rho_{\alpha,n}(\theta;g_n) \right)  \dd \theta.  $$
	According to the spectral theorem of unitary operators \cite{Akhiezer1963}, we have by construction that:
	$$ a(l;g) := \int_{-\pi}^{\pi} e^{-il\theta} \rho(\theta;g) \dd \theta =  \langle  g , \K^{l} g \rangle,\qquad a_n(l;g_n) := \int_{-\pi}^{\pi} e^{-il\theta} \rho_n(\theta;g_n) \dd \theta =  \langle  g_n , \K^{l}_n g_n \rangle, \qquad l\in \mathbb{Z}.$$
	The functions $\rho_{\alpha}(\theta; g)$ and $\rho_{\alpha,n}(\theta;g_n)$ are defined as convolutions with a $C^{\infty}$-smooth function. Recognizing that convolutions implies pointwise multiplication in Fourier domain, we obtain:
	$$   b_n(l;g) =  d_{\alpha}(l)   ( a(l;g) - a_n(l;g_n) ), $$
	where:
	$$ d_{\alpha}(l) :=  \int_{-\pi}^{\pi} e^{-il\theta}  \varphi_{\alpha}(\theta,0) \dd \theta \quad \mbox{and} \quad |d_{\alpha}(l)|\leq \frac{C_{\alpha}}{1+|l|^N} \mbox{ for every }N\in\mathbb{N}. $$ 
	Now examining the  derivative $\rho^{'}_{\alpha}(\theta; g) - \rho^{'}_{\alpha,n}(\theta;g_n)$ more closely, we see that:
\begin{eqnarray*}
	\left| \rho^{'}_{\alpha}(\theta; g) - \rho^{'}_{\alpha,n}(\theta;g_n) \right| & = &  \left| \frac{1}{2\pi}\sum_{l\in\mathbb{Z}} il b_n(l;g) e^{il\theta} \right| \\
	& \leq &  \frac{1}{2\pi} \sum_{l\in\mathbb{Z}} |l| |b_n(l;g)|\\
	& = &\frac{1}{2\pi} \sum_{l\in\mathbb{Z}} |l| |d_{\alpha}(l)|\left|a(l;g) - a_n(l;g_n) \right| \\
	& = &  \frac{1}{2\pi} \sum_{l\in\mathbb{Z}} |l| |d_{\alpha}(l)|\left| \langle  g , \K^{l} g \rangle - \langle  g_n , \K^{l}_n g_n \rangle \right| \\
	&  \leq & \frac{ \left\| g \right\|^2}{\pi}  \sum_{l\in\mathbb{Z}}  \frac{C_{\alpha} |l| }{1+|l|^N},
	\end{eqnarray*}
   which is a convergent sum for $N\geq 3$. Notice that summability is possible because the constant $C_{\alpha}$ only depends on $\alpha$ and is independent of $n$.
	\item To show that $\rho_{\alpha,n}(\omega;g_n)$ converges to $\rho_{\alpha}(\omega;g)$ in the $L^2$-norm, we will use Parseval's identity to confirm that the sum: 	$ \sum_{l\in\mathbb{Z}} \left| b_n(l;g)\right|^2 $
	can be made arbitrarily small. At first, note that:
	$$ a(l;g) - a_n(l;g_n) =   \langle g ,   \K^{l} g -  \K^{l}_n g_n \rangle   -   \langle g - g_n   ,   \K^{l}_n g_n \rangle $$
	By the triangle inequality and Cauchy-Schwarz, we obtain:
	\begin{eqnarray*}  |a(l;g) - a_n(l;g_n)| & \leq &  \left\| g \right\|  \left\| \K^{l} g -  \K^{l}_n g_n \right\| +        \left\| g - g_n \right\|   \left\|  \K^{l}_n g_n  \right\|   \\
		& \leq &   \left\| g \right\| \left(  \left\| \K^{l} g -  \K^{l}_n g_n \right\|    +   \left\| g - g_n \right\|          \right).
	\end{eqnarray*}
	Let $\epsilon>0$, and choose $k\in \mathbb{N}$ such that:
\begin{equation} 
\sum_{|l|>k}   |d_{\alpha}(l)|^2 \leq \epsilon.   \label{eq:yeahbaby}
\end{equation}
	This is always possible, because $\varphi_{\alpha}(\theta,0)$ is a $C^{\infty}$-smooth function, and therefore also square-integrable. The following upper bound can be established:
	\begin{eqnarray*}
		\sum^{\infty}_{l=-\infty} \left| b_n(l;g)\right|^2 
		& = & \sum^{\infty}_{l=-\infty} |d_{\alpha}(l)|^2 |a(l;g) - a_n(l;g_n)|^2 \\
		& \leq &  \sum^{\infty}_{l=-\infty}   |d_{\alpha}(l)|^2  \left( \left\| g \right\| \left( \left\| \K^{l} g -  \K^{l}_n g_n \right\|    +   \left\| g - g_n \right\| \right)    \right)^2 \\
		& \leq &  \left\| g \right\|^2  \sum^{\infty}_{l=-\infty}   |d_{\alpha}(l)|^2  \left(  \left\| \K^{l} g -  \K^{l}_n g_n \right\|^2    + 2\left\| \K^{l} g -  \K^{l}_n g_n \right\|\left\| g - g_n \right\| +    \left\| g - g_n \right\|^2 \right)     \\
		& = & \left\| g \right\|^2  \sum^{k}_{l=-k}   |d_{\alpha}(l)|^2  \left(  \left\| \K^{l} g -  \K^{l}_n g_n \right\|^2    + 2\left\| \K^{l} g -  \K^{l}_n g_n \right\|\left\| g - g_n \right\| +    \left\| g - g_n \right\|^2 \right)   \\
		& &  + \left\| g \right\|^2 \sum_{|l|>k}   |d_{\alpha}(l)|^2  \left(  \left\| \K^{l} g -  \K^{l}_n g_n \right\|^2    + 2\left\| \K^{l} g -  \K^{l}_n g_n \right\|\left\| g - g_n \right\| +    \left\| g - g_n \right\|^2 \right)  \\
		& \leq & \left\| g \right\|^2  \max_{-k \leq l \leq k} |d_{\alpha}(l)|^2    \sum^{k}_{l=-k} \left\| \K^{l} g -  \K^{l}_n g_n \right\|^2 + 16\left\| g \right\|^2   \sum_{|l|>k}   |d_{\alpha}(l)|^2\\
		&   & + \left\| g \right\|^2  \left\| g - g_n \right\| \sum^{k}_{l=-k}   |d_{\alpha}(l)|^2  \left(  2\left\| \K^{l} g -  \K^{l}_n g_n \right\| +    \left\| g - g_n \right\| \right).
	\end{eqnarray*}
	Now apply \eqref{eq:yeahbaby} and \cref{thm:opconvergence} to complete the proof.
\end{enumerate}

\end{proof}

\part{Applications: A convergent numerical method for volume-preserving maps on the $m$-torus}

The second part of the paper is devoted to how  spectral projections \eqref{eq:target} and density functions \eqref{eq:spectraldensity}  are computed in practice. We will limit ourselves to volume-preserving maps on the $m$-torus. That is, $X=\mathbb{T}^m$ and is equipped with the geodesic metric:
\begin{equation}
d(x,y):= \max_{k\in \{1,\ldots,m\}} \min \{|x_i-y_i|, |1 - x_i -y_i| \}.   \label{eq:torusmetric}
\end{equation}
From here onwards, $T: \mathbb{T}^m \mapsto \mathbb{T}^m$ refers to an invertible, continuous Lebesgue measure-preserving transformation on the unit $m$-torus. In other words,  $\mu$ the Lebesgue measure is lebesgue measure and $\mu(B) = \mu(T(B)) = \mu(T^{-1}(B))$ for every $B\in\mathcal{B}(\mathbb{T}^m)$, where $\mathcal{B}(\mathbb{T}^m)$ denotes the Borel sigma algebra on $\mathbb{T}^m$. 

\section{Details of the numerical method} \label{sec:numericalmethod}

The numerical method can be broken-down into three steps. The first step is the actual construction of the periodic approximation $T_n:\mathcal{P}_{n} \mapsto \mathcal{P}_{n}$. The second step is obtaining a discrete representation of the observable $g_n$. The third step is the evaluation of the spectral projections and density functions. What follows next is a detailed exposition for each of these three steps.

\subsection{Step 1: Construction of the periodic approximation}

A periodic approximation can be obtained either through analytic means or by explicit construction. 
We will first discuss the general algorithm for constructing periodic approximations.

\subsubsection{Discretization of volume-preserving maps}
Following the principles in part I, call:
\begin{equation} 
p_{n,\jhatbold} := \left[\frac{\jhat_1-1}{\tilde{n}} , \frac{\jhat_1}{\tilde{n}}  \right] \times \left[\frac{\jhat_2-1}{\tilde{n}} , \frac{\jhat_2}{\tilde{n}}  \right]  \times \ldots \times \left[\frac{\jhat_m-1}{\tilde{n}} , \frac{\jhat_m}{\tilde{n}}  \right] \quad    \label{eq:pnjdef}
\end{equation}
where:
\begin{equation}
\tilde{n} = C a^n, \qquad C, a\in\mathbb{N}, \label{eq:discrlevel}
\end{equation}
and $\jhatbold=(\jhat_1, \jhat_2,\ldots, \jhat_m)$ is a multi-index with $\jhat_i\in \{1, \ldots, \tilde{n} \}$. To simplify the notation in some cases, it is useful to alternate between the multi-index $\jhatbold$ and single-index $j \in \{1, 2,\ldots, \tilde{n}^m \}$  through  a lexicographic ordering whenever it is convenient. Consider a partitioning of the unit $m$-torus (with $\mu(\mathbb{T}^m) = 1$) into the $m$-cubes: 
$$\mathcal{P}_n = \left\{ p_{n,j} \right\}^{q(n)}_{j=1}, \qquad q(n)= \tilde{n}^m$$
and note that this set is isomorphic to $\mathbb{N}_{\tilde{n}}^m :=\{ \jhatbold \in \mathbb{N}^m: 0 < \jhat_i  \leq \tilde{n}, i=1,\ldots,m \}$. 

The sequence of partitions $\left\{\mathcal{P}_n\right\}^{\infty}_{n=1}$ of the $m$-torus form a collection of refinements which satisfy the properties:
$$ \mu(p_{n,j}) = \frac{1}{\tilde{n}^m} \quad \mbox{and} \quad \mathrm{diam}(p_{n,j})=  \frac{\sqrt{m}}{\tilde{n}}, \quad \qquad j\in\left\{1,2,\ldots,\tilde{n}^m\right\},$$
where $\tilde{n}$ is defined as in \eqref{eq:discrlevel}.
It is clear from these properties that $\left\{\mathcal{P}_n\right\}^{\infty}_{n=1}$ is a family of equal-measure partitions which meet the conditions: $\mu(p_{n,j})\rightarrow 0$ and $\mathrm{diam}(p_{n,j}) \rightarrow 0$  as $n\rightarrow \infty$.

\subsubsection{General algorithm for constructing periodic approximations} The construction algorithm which we propose here is a slight variant to the method proposed in \cite{kloeden1997constructing}. Although, here we will show that the construction is fast if the underlying map is Lipschitz continuous.

\paragraph{\emph{The bipartite matching problem}}
A bipartite graph $G=(X,Y, E)$ is a graph where every edge $e\in E$ has one vertex in $X$ and one in $Y$. A matching $M\subset E$ is a subset of edges where no two edges share a common vertex (in $X$ or $Y$). The goal of the bipartite matching problem is to find a maximum {cardinality} matching, i.e. one with largest number of edges possible. A matching is called {perfect} if all vertices are matched. A perfect matching is possible if, and only if, the bipartite graph satisfies the so-called Hall's marriage condition:
$$ \mbox{for every }B\subset X \mbox{ implies }|B | \leq | N_G(B)|$$
where $N_G(B)\subset Y$ is the set of all vertices adjacent to some vertex in $A$  (see e.g. \cite{Cameron1994}).

Bipartite matching problems belong to the class of combinatorial problems for which well-established polynomial time algorithms exist, e.g. the  Ford-Fulkerson algorithm  may be used to find a maximum cardinality matching in $\mathcal{O}(|V| |E|)$ operations, wheareas the Hopcroft-Karp algorithm does it in $\mathcal{O}(\sqrt{|V|} |E|)$  (see e.g. \cite{Ahuja1993,Rivest2009}). 

\paragraph{\emph{The algorithm}}
In \cref{sec:periodicapprox} we showed that the solution to the bipartite matching problem of the graph\footnote{Note that the partition elements $p_{n,s} \in \mathcal{P}_{n}$ are  interpreted here as vertices in a bipartite graph.}:
\begin{equation} 
G_n =(\mathcal{P}_{n}, \mathcal{P}_{n} , E_n ), \qquad (p_{n,s}, p_{n,l}) \in E_n\quad\mbox{if}\quad\mu( T(p_{n, s}) \cap p_{n, l}) > 0 \label{eq:doublystochastic} 
\end{equation}
has a perfect matching. This fact played a key role in proving \cref{thm:important}. In practice, it is not advisable to construct this graph explicity since it requires the computation of set images. 

Fortunately, a periodic approximation may be obtained from solving a related bipartite matching problem for which set computation are not necessary. Consider the partition $\mathcal{P}_n$ and associate to each partition element $p_{n,\jhatbold}\in \mathcal{P}_n$ a representative point:
\begin{equation}x_{n,\jhatbold}  = \psi_n( p_{n,\jhatbold}) := \left( \frac{\jhat_{1}-\frac{1}{2}}{\tilde{n}} , \ldots, \frac{\jhat_{m}-\frac{1}{2}}{\tilde{n}}  \right)\in p_{n,\jhatbold}. \label{eq:surrogate}
\end{equation}
Define:
\begin{equation}
\varphi_n ( x )  =  \left(\tilde{n} x_1 + \frac{1}{2}, \ldots , \tilde{n} x_m + \frac{1}{2}\right).
\end{equation}
to be the function that maps the representative points onto the  lattice $\mathbb{N}_{\tilde{n}}^m :=\{ \jhatbold \in \mathbb{N}^m: 0 <\jhat_i  \leq \tilde{n}, i=1,\ldots,m \}$,   i.e.  $\jhatbold = \varphi_n ( x_{n,\jhatbold} )$. If $\lfloor\cdot\rfloor$  ($\lceil\cdot\rceil$) denote the floor (resp. ceil) function, we may define the following set-valued map:
\begin{equation} F^{(t)}(x) = \left\{ \jhatbold \in \mathbb{N}_{\tilde{n}}^m :        \lfloor \varphi_n(x_i) \rfloor-t+1 \leq \jhat_i \leq \lceil  \varphi_n(x_i) \rceil+t-1, \quad i=1,\ldots,m \right\} 
\end{equation}
to introduce the family of neighborhood graphs:
\begin{equation}
\hat{G}^{(t)}_n :=( \mathcal{P}_{n}, \mathcal{P}_{n} , \hat{E}^{(t)}_n), \qquad (p_{n,\shatbold}, p_{n,\lhatbold}) \in \hat{E}^{(t)}_n \quad\mbox{if}\quad \lhatbold\in F^{(t)}(T(x_{n,\shatbold}))     \label{eq:neigbor}
\end{equation}
The proposal is to find a maximum cardinality matching for the bipartite graph $\hat{G}^{(t)}_n$. Notice that $\hat{E}^{(t)}_n \subseteq \hat{E}^{(s)}_n$ whenever $s\geq t$. Hence, if no perfect matching is obtained for some value of $t$, repeated dilations of the graph eventually will. The algorithm is summarized below.
\begin{AlgDef} \label{alg1} To obtain a periodic approximation, do the following:
	\begin{enumerate}[1.]
		\item Initialize $t=1$.
		\item Find the maximum cardinality maching of $\hat{G}^{(t)}_n$. \label{step2}
		\item If the matching is perfect, assign matching to be $T_n$. Otherwise set $t\leftarrow t+1$ and repeat step \ref{step2}.
	\end{enumerate}
\end{AlgDef}

\begin{theorem}[Algorithmic correctness] Algorithm~\ref{alg1} terminates in a finite number of steps and yields a map $T_n: \mathcal{P}_{n} \mapsto \mathcal{P}_{n}$ with the desired asymptotic property described in \eqref{eq:setconvergence}.
\end{theorem}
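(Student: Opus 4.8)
The plan is to split the claim into two parts — (a) finite termination of \cref{alg1}, and (b) the asymptotic property \eqref{eq:setconvergence} for the map it returns — and to reduce (b) to the machinery already assembled for \cref{thm:important}, namely \cref{lemm:beautifull} and \cref{lemma:second}. Throughout, write $t^*(n)$ for the value of $t$ at which the loop exits, so that the output $T_n$ is a perfect matching of $\hat{G}^{(t^*(n))}_n$ and every matched pair $T_n(p_{n,\shatbold}) = p_{n,\lhatbold}$ has $\lhatbold \in F^{(t^*(n))}(T(x_{n,\shatbold}))$.

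For termination, I would first record that $\hat{E}^{(t)}_n \subseteq \hat{E}^{(s)}_n$ for $s \ge t$, so it suffices to exhibit one finite $t$ for which $\hat{G}^{(t)}_n$ has a perfect matching. Since $\varphi_n(x_i) \in [\tfrac12, \tilde{n}+\tfrac12)$ for every $x\in\mathbb{T}^m$, a direct check of the defining inequalities of $F^{(t)}$ shows that $F^{(\tilde n)}(T(x_{n,\shatbold})) = \mathbb{N}_{\tilde n}^m$ for all $\shatbold$; hence $\hat{G}^{(\tilde n)}_n$ is the complete bipartite graph $K_{\tilde n^m,\tilde n^m}$, which trivially admits a perfect matching. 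Therefore the loop exits with $t^*(n) \le \tilde n$, and since each maximum-cardinality matching computation in step~\ref{step2} is itself a finite (polynomial-time) procedure, \cref{alg1} terminates.

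To obtain \eqref{eq:setconvergence} — and, when $T$ is Lipschitz, the $\mathcal{O}(1)$ bound on $t^*(n)$ implicit in the surrounding discussion — I would bound $t^*(n)$ by comparing $\hat{G}^{(t)}_n$ with the measure-theoretic graph $G_n$ of \eqref{eq:doublystochastic}. The elementary estimate is: if $\lhatbold\in F^{(t)}(x)$ then, because $\lhat_i = \varphi_n(x_{n,\lhatbold})_i$ and $|\lhat_i - \varphi_n(x_i)| < t$, one gets $|x_{n,\lhatbold,i} - x_i| < t/\tilde n$ in each coordinate, hence $d(x, x_{n,\lhatbold}) \le t/\tilde n$ (indices read cyclically on the torus). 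Conversely, writing $\omega_T$ for the modulus of continuity of $T$: if $\mu(T(p_{n,\shatbold})\cap p_{n,\lhatbold}) > 0$ then $\mathrm{diam}\,T(p_{n,\shatbold}) \le \omega_T(\mathrm{diam}\,p_{n,\shatbold}) \le \omega_T(\sqrt m/\tilde n)$, so $d(T(x_{n,\shatbold}), x_{n,\lhatbold}) \le \omega_T(\sqrt m/\tilde n) + \sqrt m/\tilde n$, which — up to the $O(1)$ slack coming from the floors and ceilings in the definition of $F^{(t)}$ — forces $\lhatbold \in F^{(t)}(T(x_{n,\shatbold}))$ as soon as $t \ge \tilde n\,\omega_T(\sqrt m/\tilde n) + \sqrt m + 2 =: \tau_n$. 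Thus $E_n \subseteq \hat{E}^{(\tau_n)}_n$, so $\hat{G}^{(\tau_n)}_n$ contains $G_n$, which has a perfect matching by (the proof of) \cref{lemm:beautifull}; hence $t^*(n) \le \tau_n$ and $\eta_n := t^*(n)/\tilde n \le \omega_T(\sqrt m/\tilde n) + (\sqrt m + 2)/\tilde n \to 0$. If $T$ is $L$-Lipschitz then $\omega_T(r)\le Lr$ and $\tau_n \le L\sqrt m + \sqrt m + 2$ is a constant, giving $t^*(n) = \mathcal{O}(1)$.

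Finally, with $\eta_n \to 0$ in hand, I would feed $T_n$ into the argument behind \cref{thm:important}. For each matched pair $T_n(p_{n,\shatbold}) = p_{n,\lhatbold}$ we have $d(T(x_{n,\shatbold}), x_{n,\lhatbold}) \le t^*(n)/\tilde n \le \eta_n$ with $T(x_{n,\shatbold}) \in T(p_{n,\shatbold})$ and $x_{n,\lhatbold} \in T_n(p_{n,\shatbold})$; combining this with $\mathrm{diam}\,T(p_{n,\shatbold}) \le \omega_T(\sqrt m/\tilde n)$, $\mathrm{diam}\,p_{n,\lhatbold} = \sqrt m/\tilde n$, and the bound $d_H(A,B) \le \mathrm{diam}\,A + \mathrm{diam}\,B + \inf\{ d(a,b) : a\in A,\, b\in B\}$ yields $\max_{p\in\mathcal{P}_n} d_H(T(p), T_n(p)) \to 0$. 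Applying $T^{-1}$ (uniformly continuous, being a continuous bijection of a compact space, with modulus $\omega_{T^{-1}}$) to the same pair gives $d(x_{n,\shatbold}, T^{-1}(x_{n,\lhatbold})) \le \omega_{T^{-1}}(\eta_n)$ and hence $\max_{p\in\mathcal{P}_n} d_H(T^{-1}(p), T^{-1}_n(p)) \to 0$ as well — that is, \eqref{eq:convergence} holds for $k=1$ (the role of \eqref{eq:condition}/\eqref{eq:something} being replaced by the slightly weaker estimate just used). The inductive step in the proof of \cref{lemma:second}, which only uses the $k=1$ base case together with uniform continuity of $T^{\pm1}$, then upgrades this to \eqref{eq:convergence} for every $k$, and the argument in the proof of \cref{thm:important} converts it into \eqref{eq:setconvergence}. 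The main obstacle I foresee is the estimate $E_n \subseteq \hat{E}^{(\tau_n)}_n$ in the third paragraph: pinning it down rigorously requires careful bookkeeping of the floor/ceil slack in $F^{(t)}$ and of the index wrap-around near the boundary of $[0,1)^m$, which the clean bounds above gloss over.
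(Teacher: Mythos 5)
Your proof is correct, and it differs from the paper's in one genuinely substantive way: the certificate that $\hat{G}^{(t)}_n$ acquires a perfect matching for a controllably small $t$. The paper imports the external result \eqref{eq:diamond} of Diamond et al.\ to build an auxiliary graph $\tilde{G}_n$ (edges where $\inf_{x\in p_{n,\shatbold}} d(T(x),\psi_n(p_{n,\lhatbold}))\le \tfrac{1}{2\tilde n}$) that is guaranteed a perfect matching, and then shows $\hat{E}^{(t)}_n\supseteq\tilde{E}_n$ once $t\ge\lceil\tilde n\,\omega(1/\tilde n)+\tfrac12\rceil$. You instead reuse the paper's own \cref{lemm:beautifull}: the measure-theoretic graph $G_n$ of \eqref{eq:doublystochastic} has a perfect matching by Hall's theorem, and an edge of $G_n$ forces $d(T(x_{n,\shatbold}),x_{n,\lhatbold})\le\omega_T(\sqrt m/\tilde n)+O(1/\tilde n)$, hence membership in $\hat{E}^{(\tau_n)}_n$ with $\tau_n=\tilde n\,\omega_T(\sqrt m/\tilde n)+O(1)$. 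This makes the argument self-contained (no citation to \cite{diamond1993numerical}), at the cost of a slightly larger constant in the bound on $t^*$; both routes give $t^*/\tilde n\to 0$ in general and $t^*=\mathcal{O}(1)$ in the Lipschitz case, and both then funnel the resulting bound on $\max_p d_H(T(p),T_n(p))$ into the induction of \cref{lemma:second} and the argument of \cref{thm:important} in the same way (your observation that \eqref{eq:condition} is only used to seed the $k=1$ base case, and can be replaced by the direct proximity estimate, is exactly the implicit content of the paper's closing sentence). Your termination argument ($\hat{G}^{(\tilde n)}_n$ is complete bipartite) is the same as the paper's. The one point you flag yourself --- the floor/ceiling and wrap-around bookkeeping in $E_n\subseteq\hat{E}^{(\tau_n)}_n$ --- is genuine but routine, and the $O(1)$ slack you budget for it is sufficient.
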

\begin{proof}
	For large enough $t$, the graph $\hat{G}^{(t)}_n$ eventually becomes the complete graph. Hence, the algorithm must terminate in a finite number of steps. However, we must show that the algorithm terminates way before that, yielding a map with the asymptotic property \eqref{eq:setconvergence}. 
	
	In \cite{diamond1993numerical}, it was proven that for any Lebesgue measure-preserving map $T:\mathbb{T}^d \mapsto \mathbb{T}^d$, there exists a periodic map for which:
	\begin{equation}   \sup_{p_{n,\jhatbold} \in \mathcal{P}_n} \inf_{x\in \mathbb{T}^d}  \max\{ d(x ,\psi_{n}( p_{n,\jhatbold} ))    ,  d( T(x) , \psi_{n}(T_n( p_{n,\jhatbold} )) ) \}   \leq \frac{1}{2\tilde{n}},  \label{eq:diamond}
	\end{equation}
	where $d(\cdot,\cdot)$ refers to the metric defined in \eqref{eq:torusmetric}. By virtue of this fact, if one is able to construct a bipartite graph $\tilde{G}_n= ( \mathcal{P}_{n}, \mathcal{P}_{n} , \tilde{E}_n)$ for which:
	$$ (p_{n,\shatbold}, p_{n,\lhatbold}) \in \tilde{E}_n\quad \mbox{if}\quad \inf_{x\in p_{n,\shatbold} } d ( T(x), \psi_{n}( p_{n,\lhatbold} ) ) \leq \frac{1}{2\tilde{n}},$$
	then this bipartite graph, or any super-graph, should admit a perfect matching. Indeed, if we would assume the contrary, then this would imply that one is required to construct a periodic approximation $T_n$ for which  $d( T(x) , \psi_{n}(T_n( p_{n,\jhatbold} )))\geq \frac{1}{2\tilde{n}}$, an immediate contradiction of \eqref{eq:diamond}. 
	
	This fact has implications on the graph $\hat{G}^{(t)}_n$. As soon as $t$ hits a value for which $\hat{E}^{(t)}_n \supseteq \tilde{E}_n$, the graph $\hat{G}^{(t)}_n$ is gauranteed to have a perfect matching (although it may occur even before that). Notice that by compactness of the $m$-torus, $T$ admits a modulus of continuity $\omega: [0,\infty) \mapsto [0,\infty)$ with $ d ( T(x), T(y)) \leq \omega( d(x,y))$. An upper bound on the value of $t$ for which $\hat{G}^{(t)}_n$ becomes a supergraph of $\tilde{G}_n$ is given by:
	$$t^*=\lceil \tilde{n}\omega(1/\tilde{n}) + \frac{1}{2} \rceil\leq (\tilde{n}+1) \left( \omega(1/{\tilde{n}})  + \frac{1}{2\tilde{n}}\right) .$$
	That is, the algorithm should terminate for $t\leq t^*$.
	
	By assuming that the algorithm terminates at $t^*$, a worst-case error bound can be established for any map generated from the bipartite graphs.  We see that:
	$$ \sup_{p_{n,\jhatbold} \in \mathcal{P}_n} d(\psi_n (T_n (p_{n,\jhatbold})),   T(\psi_n(p_{n,\jhatbold}))  ) \leq \frac{t^*}{\tilde{n}} \leq \frac{\tilde{n}+1}{\tilde{n}} \left( \omega(1/{\tilde{n}})  + \frac{1}{2\tilde{n}}\right).$$
	Next, using the inequality:
	$$ d_H(T(p_{n,\jhatbold}),T_n(p_{n,\jhatbold})) \leq \mathrm{diam}(T_n(p_{n,j})) +  d(\psi_n (T_n (p_{n,\jhatbold})),   T(\psi_n(p_{n,\jhatbold}))  )    +  \mathrm{diam}(T(p_{n,j})),  $$
	it follows that:
	$$ \lim_{n\rightarrow \infty} d_H(T(p_{n,\jhatbold}),T_n(p_{n,\jhatbold})) \leq  \lim_{n\rightarrow \infty}   \frac{1}{\tilde{n}} +  \frac{\tilde{n}+1}{\tilde{n}} \left( \omega(1/{\tilde{n}})  + \frac{1}{2\tilde{n}}\right) + \omega(1/{\tilde{n}}) =0.$$
	From here onwards, one can proceed in the same fashion as the proofs of he proofs of \cref{lemma:second} and \cref{thm:important} to show that $\{T_n \}^{\infty}_{n=1}$ indeed satisfies the convergence property \eqref{eq:setconvergence}.
\end{proof}

\paragraph{\emph{Complexity of Algorithm~\ref{alg1}}}
Since the number of edges in $\hat{G}^{(t)}_n$ is proportional to the number of nodes for small $t$-values, the Hopcroft-Karp algorithm will solve a matching problem in $\mathcal{O}(\tilde{n}^{\frac{3m}{2}})$ operations. If additionally, the map $T$ is assumed to be Lipschitz, i.e. $\omega(d( x, y) ) = K d(x,y) $, the algorithm terminates before $t^* =  \lceil K+ \tfrac{1}{2} \rceil $, i.e. independently  of the discretization level $\tilde{n}$.  Hence, $\mathcal{O}(\tilde{n}^{\frac{3m}{2}})$ is also the overall time-complexity of algorithm. We note that explicit storage of the periodic map is $\mathcal{O}(\tilde{n}^{m})$ in complexity. Ideally, it is desirable to obtain a formulaic expression for $T_n$.

\subsubsection{Analytic constructions of periodic approximations} With the boxed partition of the $m$-torus \eqref{eq:pnjdef}, a sub-class of maps may be periodically aproximated analytically through simple algebraic manipulations (see e.g. \cite{earn1992exact}). In addition to some purely mathematical transformations, this also includes maps which arise in physical problems. In fact, any perturbed Hamiltonian twist map like Chirikov's Standard Map \cite{Chirikov1979}, or certain kinds of volume preserving maps such as Feingold's ABC map \cite{Feingold1988} belong to this category. 

A key observation is that these Lebesgue measure-preserving transformations consists of compositions of the following basic operations: 
\begin{enumerate}[1.]
	\item A signed permutation:
	\begin{equation}
	T(x) = \begin{bmatrix} 
	\xi(1) x_{\sigma(1)} \\
	\vdots \\
	\xi(m) x_{\sigma(m)}
	\end{bmatrix}, \quad \mbox{with }\mathrm{\sigma(\cdot)}\mbox{ a permutation and }\xi(\cdot)\in\{-1,1\}.
	\end{equation}
	which is periodically approximated by:
	$$ T_n (p_{n,\jhat}) = \psi^{-1}_n  \circ \varphi^{-1}_n \circ   \left(   \begin{bmatrix} 
	\xi(1) \jhat_{\sigma(1)} \\
	\vdots \\
	\xi(m) \jhat_{\sigma(m)}
	\end{bmatrix}  \mod \tilde{n}  \right) \circ \varphi_n  \circ \psi_n   (p_{n,\jhat}).  $$
	\item A translation:
	\begin{equation}
	T(x) = \begin{bmatrix} x_1 + \omega_1\\ \vdots \\ x_m + \omega_m\end{bmatrix} \mod 1. \label{eq:translation}
	\end{equation}
	which is periodically approximated by:
	$$ T_n (p_{n,\jhat}) = \psi^{-1}_n  \circ \varphi^{-1}_n \circ   \left(   \begin{bmatrix} \jhat_1 +  \lfloor \varphi_n(\omega_1) \rceil \\ \vdots \\ \jhat_m  +  \lfloor \varphi_n(\omega_m) \rceil \end{bmatrix}  \mod \tilde{n}  \right) \circ \varphi_n  \circ \psi_n   (p_{n,\jhat}). $$
	where $\lfloor\cdot \rceil$ denotes the nearest-integer function.
	\item A shear:
	\begin{equation}
	T(x) = \begin{bmatrix} x_1 \\  \vdots \\ x_i + f(x_1, \ldots , x_{i-1} , x_{i+1},\ldots,x_m) \\ \vdots \\ x_m  \end{bmatrix}
	\end{equation}
	which is periodicaly approximated by:
	$$ T_n (p_{n,\jhat}) = \psi^{-1}_n  \circ \varphi^{-1}_n \circ   \left(   \begin{bmatrix} \jhat_1 \\ \vdots \\ \jhat_i + \lfloor \varphi_n ( f(\varphi^{-1}_n(\jhat_1), \ldots , \varphi^{-1}_n(\jhat_{i-1}), \varphi^{-1}_n(\jhat_{i+1}),\ldots, \varphi^{-1}_n(\jhat_{m}) ) )  \rceil \\ \vdots \\ \jhat_m \end{bmatrix}  \mod \tilde{n}  \right) \circ \varphi_n  \circ \psi_n   (p_{n,\jhat}).  $$
\end{enumerate} 
Any map which is expressable as a finite composition of these operations can be periodically aproximated through approximation of each its components, i.e. the map
$ T(x) = T_s \circ \cdots \circ T_1 (x)$ is approximated by  $T_n(x) = T_{s,n} \circ \cdots \circ T_{1,n}  (x)$.
It is not hard to show that this would lead to a sequence of approximations that satisfy \eqref{eq:setconvergence}. Examples of maps which may be approximated in this manner are:
\begin{itemize}
	\item Arnold's Cat map \cite{Arnold1989}: 
	\begin{equation} 
	T(x) = \begin{bmatrix} 2 & 1 \\ 1 & 1 \end{bmatrix} \begin{bmatrix} x_1 \\ x_2 \end{bmatrix} \mod{1}, \label{eq:catmap}
	\end{equation}
	which is the composition of a permutation and shear map:
	$$ T(x) =  T_2 \circ T_1 \circ T_2 \circ T_1 (x), \quad \mbox{where:} \quad T_1(x) = \begin{bmatrix} 0 & 1 \\ 1 & 0 \end{bmatrix}\begin{bmatrix} x_1 \\ x_2 \end{bmatrix}, \quad T_2(x) = \begin{bmatrix} 1 & 0 \\ 1 & 1 \end{bmatrix}\begin{bmatrix} x_1 \\ x_2 \end{bmatrix}.$$
	\item Anzai's example \cite{anzai1951ergodic} of a skew product transformation:
	\begin{equation}
	T(x) = \begin{bmatrix} x_1+\gamma \\  x_1 + x_2 \end{bmatrix} \mod 1,      \label{eq:skewproduct}
	\end{equation}
	which is the composition of a shear map and a translation:
	$$ T(x) =  T_2 \circ T_1 (x), \quad \mbox{where:} \quad  T_1(x) = \begin{bmatrix} 1 & 0 \\ 1 & 1 \end{bmatrix}\begin{bmatrix} x_1 \\ x_2 \end{bmatrix}, \quad T_2(x) = \begin{bmatrix} x_1 + \gamma \\ x_2\end{bmatrix}.$$
	\item Chirikov Standard map \cite{Chirikov1979}:
	\begin{equation} 
	T(x) = \begin{bmatrix} x_1 + x_2 + K \sin(2\pi x_1)  \\  x_2 + K \sin(2\pi x_1) \end{bmatrix}\mod{1}, \label{eq:chirikovfamily}
	\end{equation}	
	which is the composition of two distinct shear maps in orthogonal directions:
	$$ T(x) =   T_2 \circ T_1 (x),\quad\mbox{where:} \quad T_1(x)  =  \begin{bmatrix} x_1   \\  x_2 + K \sin(2\pi x_1) \end{bmatrix}\mod{1},\quad T_2(x) = \begin{bmatrix} 1 & 1 \\ 1 & 0 \end{bmatrix}\begin{bmatrix} x_1 \\ x_2 \end{bmatrix}\mod{1}.$$
	\item The ABC-map of Feingold \cite{Feingold1988} which is the composition of 3 shear maps:
	$$ T(x) = T_3 \circ T_2 \circ T_1 (x),  $$
	where:
	$$ T_1(x) = \begin{bmatrix} x_1 + A \sin(2 \pi x_1) +  C \cos(2 \pi x_3)  \\ x_2 \\ x_3  \end{bmatrix}, \qquad T_2(x) = \begin{bmatrix} x_1  \\ x_2 + B \sin(2 \pi x_1) + A \cos(2\pi x_3) \\ x_3  \end{bmatrix},$$
	$$ T_3(x) = \begin{bmatrix} x_1  \\ x_2 \\ x_3 + C\sin(2 \pi x_2) + B\cos(2\pi x_1)   \end{bmatrix}.  $$
\end{itemize}

\subsection{Step 2: obtain a discrete representation of the observable}

The averaging operator \eqref{eq:averageoperator} is used to obtain a discrete representation to the observable. The operator \eqref{eq:averageoperator}  is an orthogonal projector which maps observables in $L^2(\mathbb{T}^m, \mathcal{B}(\mathbb{T}^m), \mu)$ onto their best approximations in $L^2_n(\mathbb{T}^m, \mathcal{B}(\mathbb{T}^m), \mu)$ . In practice, it makes no sense to explicitly evaluate this projection since we are mostly in the spectra of well-behaved functions. A discrete representation of the observable may also be obtain by simply sampling the function at the representative points \eqref{eq:surrogate}.  The averaging operator \eqref{eq:averageoperator} is replaced with:
$$ 	(\tilde{\mathcal{W}}_n g)(x) = \tilde{g}_{n}(x) := \sum_{j=1}^{q(n)} g( x_{n,j})\chi_{p_{n,j}}(x).   $$
Indeed, for observables that are continuous, it is not hard to show that $\left\|g_n - \tilde{g}_n  \right\| \rightarrow 0$ as $n\rightarrow \infty$.

\subsection{Step 3: Computing spectral projection and density function}

The discrete Koopman operator \eqref{eq:discreteoperator} is isomorphic to the permutation matrix $\mathrm{U}_n \in \mathbb{R}^{q(n) \times q(n)}$ by the similarity transformation:
\begin{equation}
\left[ \mathrm{U}_n \right]_{ij} := \left< \chi_{p_{n, i}}, \K_n \chi_{p_{n, j}} \right> = \begin{cases} 1  &\mbox{if }T(p_{n,j}) = p_{n,i}\\ 0 & \mbox{otherwise}\end{cases}.\label{eq:permutation}
\end{equation}
This property is exploited to evaluate the spectral projections and density functions very efficiently.
\subsubsection{The eigen-decomposition of a permutation matrix}
Permutation matrices admit an analytical expression for their eigen-decomposition. A critical part of this expression is the so-called cycle decomposition which decouples the permutation matrix into its cycles. More specifically, associated to every permutation $\mathrm{U}_n\in \mathbb{R}^{q(n) \times q(n)}$ one can find a different permutation matrix $\mathrm{P}_n\in \mathbb{R}^{q(n) \times q(n)}$ such that by a similarity transformation, we have:
\begin{equation} 
\mathrm{P}^T_n  \mathrm{U}_n {\mathrm{P}_n} =  \mathrm{C}_n  \label{eq:cyclicdecomp}
\end{equation}
where $\mathrm{C}_n$ is the circulant matrix:
$$ \mathrm{C}_n = \begin{bmatrix} \mathrm{C}^{(1)}_{n} & \\ & \ddots \\ & & \mathrm{C}^{(s)}_{n} \end{bmatrix},\qquad \mathrm{C}^{(k)}_{n} := \begin{bmatrix} 0 & 0  & \cdots      &   0   & 1 \\
1 &  0 &  \cdots     &   0   & 0   \\
0  & 1 &   \cdots    &   0  & 0 \\
\vdots  & \vdots  & \ddots     &   \vdots  & \vdots \\
0  &  0  &   \cdots   &  1 & 0    \end{bmatrix}\in \mathbb{R}^{n^{(k)} \times n^{(k)}},$$
$$ n^{(1)} + n^{(2)} + \ldots +  n^{(s)} = q(n).$$
The decomposition of \eqref{eq:permutation} into its cyclic subspaces is obtained as follows. Initialize $\mathrm{U}_{n,1} := \mathrm{U}_{n}$ and call:
$$ \mathrm{P}^{(1)}_n:= \begin{bmatrix}\mathrm{U}_{n,1}e_1 & \cdots & \mathrm{U}^{n^{(1)}}_{n,1}e_1 & e_{n^{(s)}+1} & \cdots & e_{q(n)}   \end{bmatrix}, \qquad\mbox{with }\mathrm{U}^{n^{(1)}}_{n,1}e_1 = e_1. $$
A similarity transformation yields:
$$ U_{n,2} := {\mathrm{P}^{(1)}_n}^T \mathrm{U}_{n,1} \mathrm{P}^{(1)}_n = \begin{bmatrix} \mathrm{C}^{(1)}_{n} \\ & \tilde{\mathrm{U}}_{n,1}  \end{bmatrix}. $$
Repeating this process for all $s$ cycles should result in $U_{n,s} = \mathrm{C}_n$ and $\mathrm{P}_n = \mathrm{P}^{(1)}_n \cdots \mathrm{P}^{(s)}_n$.

It is well-known that the Discrete Fourier Transform (DFT) matrix diagonalizes any circulant matrix. In particular, for $\mathrm{C}^{(k)}_{n}\in \mathbb{R}^{n^{(k)} \times n^{(k)}}$ the decomposition reads:            
\begin{equation}
\mathrm{C}^{(k)}_{n} =  \left(\mathrm{DFT}\right)^*_{n^{(k)}} \mathrm{\Lambda}_{n^{(k)}}   \left(\mathrm{DFT}\right)_{n^{(k)}}, \label{eq:DFTdecomp}
\end{equation}                              
where:
$$ \left(\mathrm{DFT}\right)_{n^{(k)}} = \frac{1}{\sqrt{n^{(k)}}}\begin{bmatrix} 1 & 1 & 1 & \cdots & 1\\
1 &\omega& \omega^2  & \cdots & \omega^{n^{(k)}-1}\\
1 & \omega^2 & \omega^4 & \cdots & \omega^{2(n^{(k)}-1)} \\
\vdots &    \vdots  & \vdots &   \ddots & \vdots \\
1 &  \omega^{1(n^{(k)}-1)} & \omega^{2(n^{(k)}-1)} & \cdots & \omega^{(n^{(k)}-1)(n^{(k)}-1)}
\end{bmatrix}, \mathrm{\Lambda}_{n^{(k)}} = \begin{bmatrix} \omega^0 & \\  & \ddots \\ & & \omega^{n^{(k)} -1}  \end{bmatrix},$$
and $\omega = \exp\left({\frac{2\pi}{n^{(k)}}}i\right)$.
By combining \eqref{eq:cyclicdecomp} with \eqref{eq:DFTdecomp}, the following explicit expression for the eigen-decomposition of \eqref{eq:permutation} is obtained:
\begin{equation}
\mathrm{U}_n = \mathrm{V}_n \mathrm{\Lambda}_n \mathrm{V}^*_n, \label{eq:eigperm}
\end{equation}
where:
$$ \mathrm{V}_n = {\mathrm{P}_n}  \begin{bmatrix}   \left(\mathrm{DFT}\right)^*_{n^{(1)}} & \\ & \ddots \\ & &   \left(\mathrm{DFT}\right)^*_{n^{(s)}} \end{bmatrix},\qquad \mathrm{\Lambda}_{n} =  \begin{bmatrix}  \mathrm{\Lambda}_{n^{(1)}} & \\ & \ddots \\ & &  \mathrm{\Lambda}_{n^{(s)}} \end{bmatrix}. $$

\subsubsection{Step 3a: Computing the spectral projection}
With the help of  \eqref{eq:eigperm} one may also obtain a closed-form expression for the vector representation of \eqref{eq:targetapprx}. Indeed, if 
$\mathrm{g}_n \in \mathbb{C}^{q(n)}$ and $\mathrm{S}_{n,D}\in \mathbb{C}^{q(n) \times q(n)}$ respectively denote:
$$ \left[ g_n \right]_{j} = \left< \chi_{p_{n, i}}, g_n \right>, \qquad \left[ \mathrm{S}_{n,D} \right]_{ij} := \left< \chi_{p_{n, i}}, \Proj_{n,D} \chi_{p_{n, j}} \right>,  $$
then it is straightforward to show that \eqref{eq:targetapprx} is reduced to the formula: 
\begin{equation} 
\mathrm{S}_{n,D} \mathrm{g}_n = \mathrm{V}_n  \mathbb{1}_{D}( \mathrm{\Lambda}_n ) \mathrm{V}^*_n  \mathrm{g}_n, \qquad \mbox{where: } \mathbb{1}_{D}(e^{i\theta})= \begin{cases} 1 & \mbox{if  }\theta \mod 2\pi \in D \\ 0 & \mbox{otherwise}\end{cases}. \label{eq:thecode}
\end{equation}
The evaluation of \eqref{eq:thecode} will involve finding the cycle decomposition \eqref{eq:cyclicdecomp}, which in turn requires traversing the cycles of the permutation matrix $\mathrm{U}_n \in \mathbb{R}^{q(n) \times q(n)}$. This requirement is equivalent to traversing the trajectories of the discrete map $T_n:\mathcal{P}_n\mapsto \mathcal{P}_n$. We arrive to the following algorithm.
\begin{AlgDef} \label{alg2} To compute the spectral projection, do the following:
	\begin{enumerate}[1.]
		\item Initialize a vector $f\in\mathbb{R}^{q(n)}$ of all zeros and set  $j=1$ and $k=1$.
		\item Given some $j\in\{1,\ldots, q(n) \}$, if $[f]_j = 1$ move on to step 4 directly. Otherwise, traverse the trajectories of the discrete map until it has completed a cycle, i.e.
		$$ p_{n,j(l)} = T^l_n(p_{n,j(1)}), \qquad  l=1,2,\ldots, n^{(k)}, \qquad \mbox{where: } j(1) = j \quad \mbox{and} \quad  p_{n,j(1)} = T^{n^{(k)}}_n(p_{n,j(1)}).$$
		To demarcate visited partition elements,  set $[f]_{j(l)} = 1$ for $ l=1,2,\ldots, n^{(k)}$.
		\item Denote by $\omega = \exp\left({\tfrac{2\pi}{n^{(k)}}}i\right)$. Apply the FFT and inverse-FFT algorithms to evaluate:
		$$ \left[\mathrm{S}_{n,D} \mathrm{g}_n\right]_{j(l)}  = \frac{1}{n} \sum_{\tfrac{2\pi (t-1)}{n^{(k)}}\in D }  \omega^{(l-1) (t-1)}   \left( \sum^{n^{(k)}}_{r=1}  \omega^{ (t-1)(r-1)}   g(\psi(p_{n,j(r)})) \right)$$
		for $l=1,2,\ldots,n^{(k)}$. Increment $k\leftarrow k+1$.
		\item Move on to the next partition element $p_{n,j} \in \mathcal{P}_{n}$, i.e. $j\leftarrow j+1$, and repeat step 2 until all partition elements have been visited.
	\end{enumerate}
\end{AlgDef}
\paragraph{\emph{Complexity of Algorithm~\ref{alg2}}}
The most dominant part of the computation is the application of the FFT and inverse-FFT algorithms on the cycles. Hence, the time-complexity of the algorithm is $ \mathcal{O}(\sum^s_{l=1} n^{(l)} \log n^{(l)})$. In the worst-case scenario, it may occur that the permutation \eqref{eq:permutation} consist of one single large cycle. In that case, we obtain the conservative estimate $\mathcal{O}(d \tilde{n}^d \log \tilde{n})$. We note that the evaluation of the spectral projections may be further accelerated using sparse FFT techniques. This is notably true for projections wherein the interval $D\subset [-\pi,\pi)$ is small, leading to a highly rank-deficient $\mathrm{S}_{n,D}\in \mathbb{C}^{q(n) \times q(n)}$.

\subsubsection{Step 3b: Computing the spectral density function} By modifying step 3 of the previous algorithm, one may proceed to compute an approximation of the spectral density function \eqref{eq:densityapprx}:
$$ \rho_{\alpha,n}(\theta; g_n) := \int_{-\pi}^{\pi} \varphi_{\alpha}(\xi,\theta)  \rho_{n}(\xi; g_n)  \dd \xi,      
$$
where:
$$
\varphi_{\alpha}(x,y) = \begin{cases}   \frac{K}{\alpha} \exp\left({\frac{-1}{1- \left(\frac{d(x,y)}{\alpha} \right)^2 }}\right) &  \frac{d(x,y)}{\alpha} < 1    \\
0 & \mbox{otherwise}                          
\end{cases}, \qquad\rho_n(\theta;g_n) =  \sum_{k=1}^{q(n)} \left\| \Proj_{n, \theta_{n, k}} g_n \right\|^2 \delta(\theta - \theta_{n,k}). 
$$
If \eqref{eq:densityapprx} needs to evaluated at some collection of points $\{ \theta_r \}^{R}_{r=1} \subset [-\pi, \pi)$, the algorithm would look as follows:
\begin{AlgDef}\label{alg3} To evaluate the spectral density function on $\{ \theta_r \}^{R}_{r=1} \subset  [-\pi, \pi)$, do the following:
	\begin{enumerate}[1.]
		\item Initialize a vector $f\in\mathbb{R}^{q(n)}$ of all zeros and also initialize $\rho_{\alpha,n}(\theta_r; g_n)=0$ for $r=1,\ldots,R$. Furthermore,  set  $j=1$ and $k=1$.
		\item Given some $j\in\{1,\ldots, q(n) \}$, if $[f]_j = 1$ move on to step 4 directly. Otherwise, traverse the trajectories of the discrete map until it has completed a cycle, i.e.
		$$ p_{n,j(l)} = T^l_n(p_{n,j(1)}), \qquad  l=1,2,\ldots, n^{(k)}, \qquad \mbox{where: } j(1) = j \quad \mbox{and} \quad  p_{n,j(1)} = T^{n^{(k)}}_n(p_{n,j(1)}).$$
		To demarcate visited partition elements,  set $[f]_{j(l)} = 1$ for $ l=1,2,\ldots, n^{(k)}$.
		\item Denote by $\omega = \exp\left({\tfrac{2\pi}{n^{(k)}}}i\right)$. Apply the FFT algorithms to update:
		$$ \rho_{\alpha,n}(\theta_r; g_n) \leftarrow \rho_{\alpha,n}(\theta_r; g_n) + \frac{K}{n\alpha} \sum_{ d\left(\tfrac{2\pi (t-1)}{n^{(k)}},\theta_r \right) < \alpha}  \exp\left({\frac{-1}{1- \left(\frac{d\left(\tfrac{2\pi (t-1)}{n^{(k)}},\theta_r \right)}{\alpha} \right)^2 }}\right)  \left| \sum^{n^{(k)}}_{l=1}  \omega^{ (t-1)(l-1)}   g(\psi(p_{n,j(l)})) \right|^2 $$
		for $r=1,2,\ldots,R$. Increment $k\leftarrow k+1$.
		\item Move on to the next partition element $p_{n,j} \in \mathcal{P}_{n}$, i.e. $j\leftarrow j+1$, and repeat step 2 until all partition elements have been visited.
	\end{enumerate}
\end{AlgDef}
\paragraph{\emph{Complexity of Algorithm~\ref{alg3}}}
Since $\alpha>0$ is typically small, the most dominant part of the computation wil again be the application of the FFT algorithm in step 3. Assuming that $R<<n^{(k)}$,  the time-complexity of the algorithm will be $ \mathcal{O}(\sum^s_{l=1} n^{(l)} \log n^{(l)})$, reducing to $\mathcal{O}(d \tilde{n}^d \log \tilde{n})$ in the worst-case scenario of a cyclic permutation.

\section{Canonical examples} \label{sec:examples}

In this section we analyze the behavior of our numerical method on three canonical examples of Lebesgue measure-preserving transformations on the $m$-torus: the translation map \eqref{eq:translation}, Arnold's cat map \eqref{eq:catmap}, and Anzai's skew-product transformation \eqref{eq:skewproduct}.  The considered maps permit an easy closed form solution for both the spectral projectors and density functions, hence making them a good testing bed to analyze the performance of the numerical method. Furthermore, the examples cover the full range of possible ``spectral scenario's'' that one may encounter in practice: the first map has a fully discrete spectrum, the second one has a fully continuous spectrum\footnote{Expect for the trivial eigenvalue at 1}, and the third map has a mixed spectrum.

\subsection{Translation on the $m$-torus}
Consider a translation on the torus \eqref{eq:translation}:
$$T(x) = \begin{bmatrix} x_1 + \omega_1\\ \vdots \\ x_m + \omega_m\end{bmatrix} \mod 1. 
$$
This is the type of map one would typically obtain from a Poincare section of an integrable Hamiltonian expressed in action-angle coordinates. Denote by ${k} = (k_1,\ldots,k_m) \in \mathbb{Z}^m$ and ${\omega} = (\omega_1,\ldots,\omega_m)\in\mathbb{R}^m$.  The map \eqref{eq:translation} is ergodic if and only if $ {k} \cdot {\omega}  \notin \mathbb{Z}$ for all ${k}\in \mathbb{Z}^m$ and ${k}\ne 0$.

\subsubsection{Singular and regular subspaces}
The Koopman operator associated with \eqref{eq:translation} has a fully discrete spectrum. Moreover, the Fourier basis elements turn out to be eigenfunctions for the operator, i.e. 
$$ \K e^{2\pi i ({k}\cdot x) } =  e^{2\pi i ({k}\cdot {\omega}) }  e^{2\pi i ({k}\cdot x) }. $$
The singular and regular subspaces are respectively:
$$ H_s =   \overline{\spn \{ e^{2\pi i ({k}\cdot x) }, \quad {k}\in \mathbb{Z}^m    \}},\qquad H_r  =  \emptyset.   $$
\subsubsection{Spectral density function}
The spectral density function for observables in the form:
$$ g = \sum_{{k} \in \mathbb{Z}^m } a_{{k}} e^{2\pi i ({k}\cdot x) }  \in L^m(\mathbb{T}^m, \mathcal{B}(\mathbb{T}^m), \mu), $$
is given by the expression:
\begin{equation} \rho(\theta;g) =  \sum_{{k}\in \mathbb{Z}^{m}} |a_{{k}}|^2 \delta(\theta - 2\pi ({k} \cdot {\omega}) \mod 2\pi).      \label{eq:specdensitytranslation}
\end{equation}
Noteworthy to mention is that the spectra of the operator for the map \eqref{eq:translation} can switch from isolated to dense with arbitrarily small perturbations to $\omega\in\mathbb{R}^m$. 
\subsubsection{Spectral projectors}
Given an interval $D=[a,b)\subset [-\pi,\pi)$, we have the following straightforward expression for the spectral projectors:
$$ \Proj_D g = \sum_{2\pi ({k} \cdot {\omega}) \mod 2\pi \in D} a_{{k}} e^{2\pi i ({k}\cdot x) }. $$
\subsubsection{Numerical results}
In \cref{fig:densityplotcircle}, we plot the spectral density function \eqref{eq:spectraldensity} for the case when $d=1$, $\omega_1 = 1/3$, and:
\begin{equation} 
g(x) =  \frac{\sin( 4 \pi x)}{ 1 + \cos^2( 2\pi x) + \sin(7\pi x)}. \label{eq:1Dobservable}
\end{equation}
As the partitions are refined, it is evident from \cref{fig:densityplotcircle} that the energy  becomes progressively more concentrated around the eigenvalues of the true spectra at $e^{i\frac{2\pi (k-1)}{3}}$, with $k=1,2,3$. 

\begin{figure}[h!]
	\begin{center}
		\includegraphics[width=.25\textwidth]{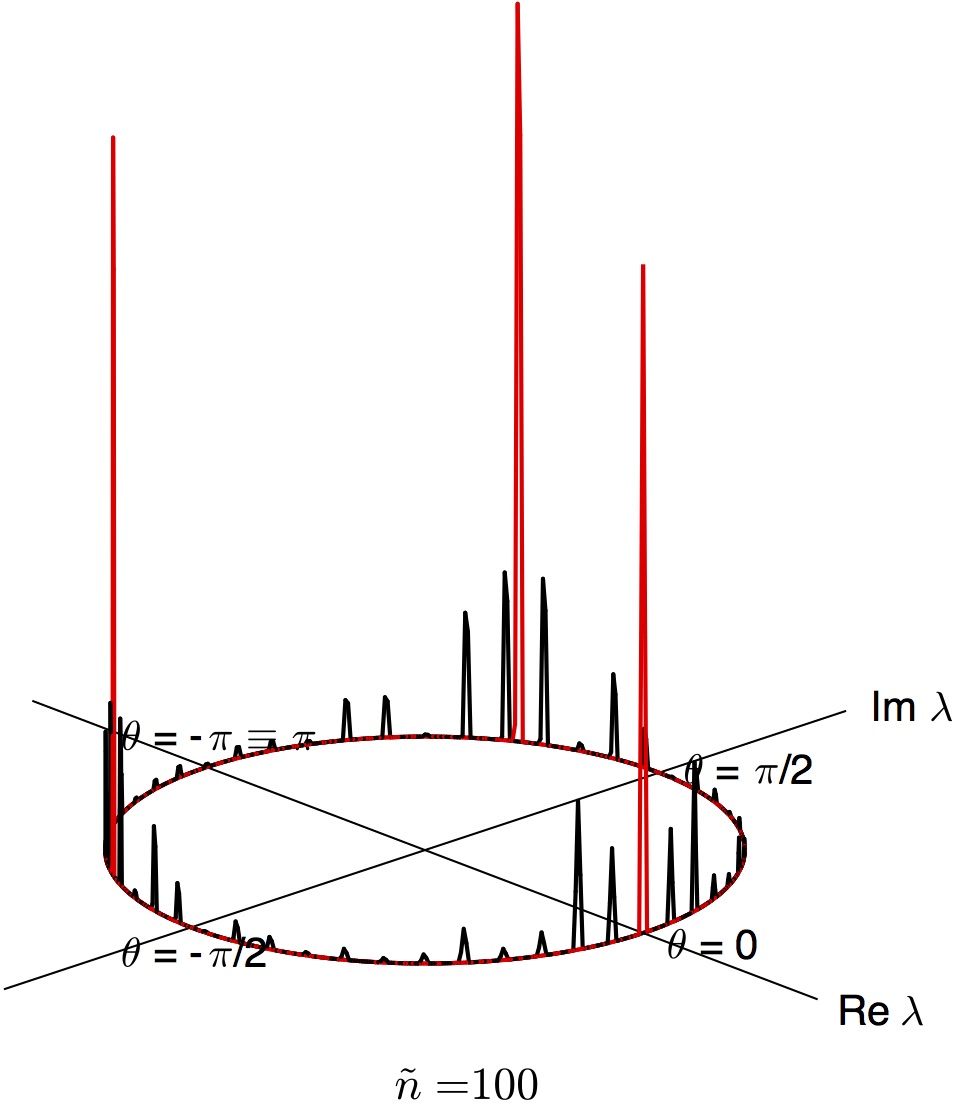}\includegraphics[width=.25\textwidth]{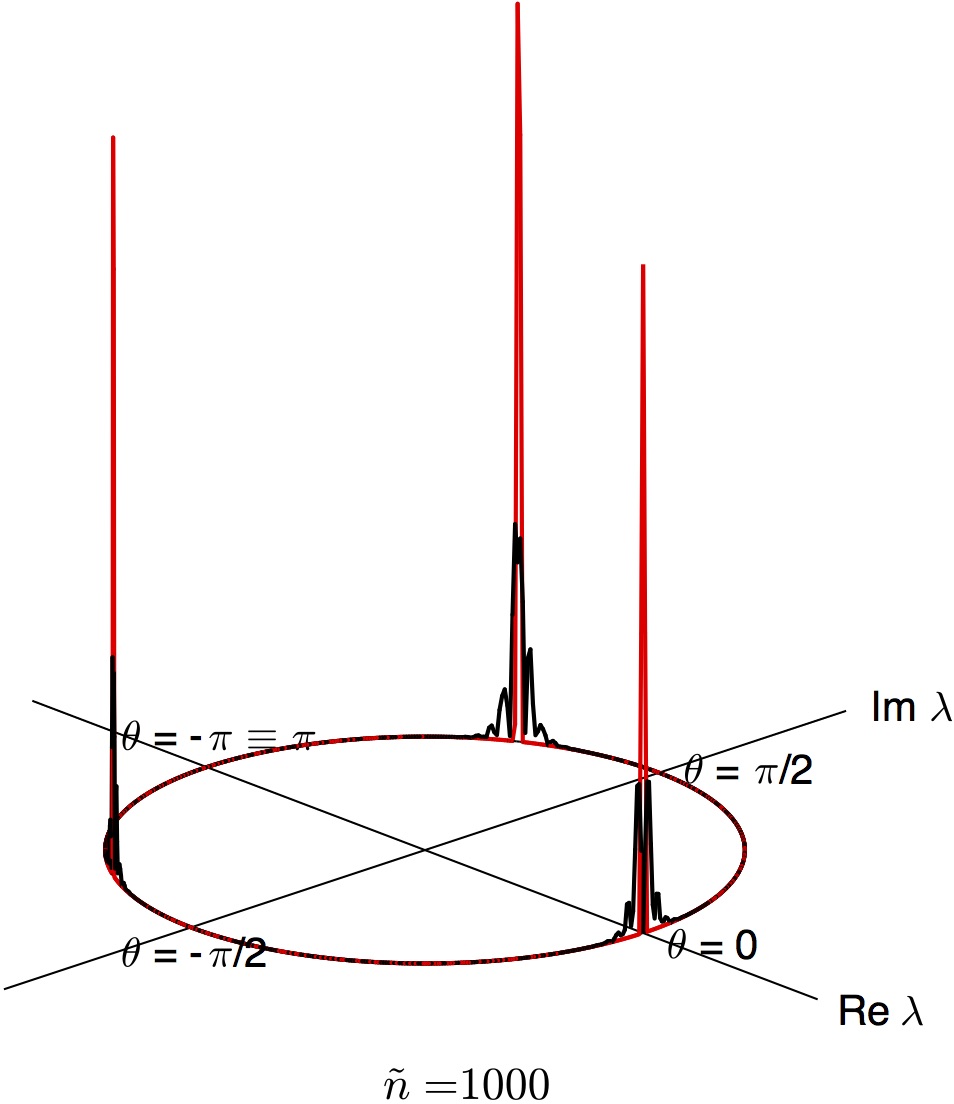} \includegraphics[width=.25\textwidth]{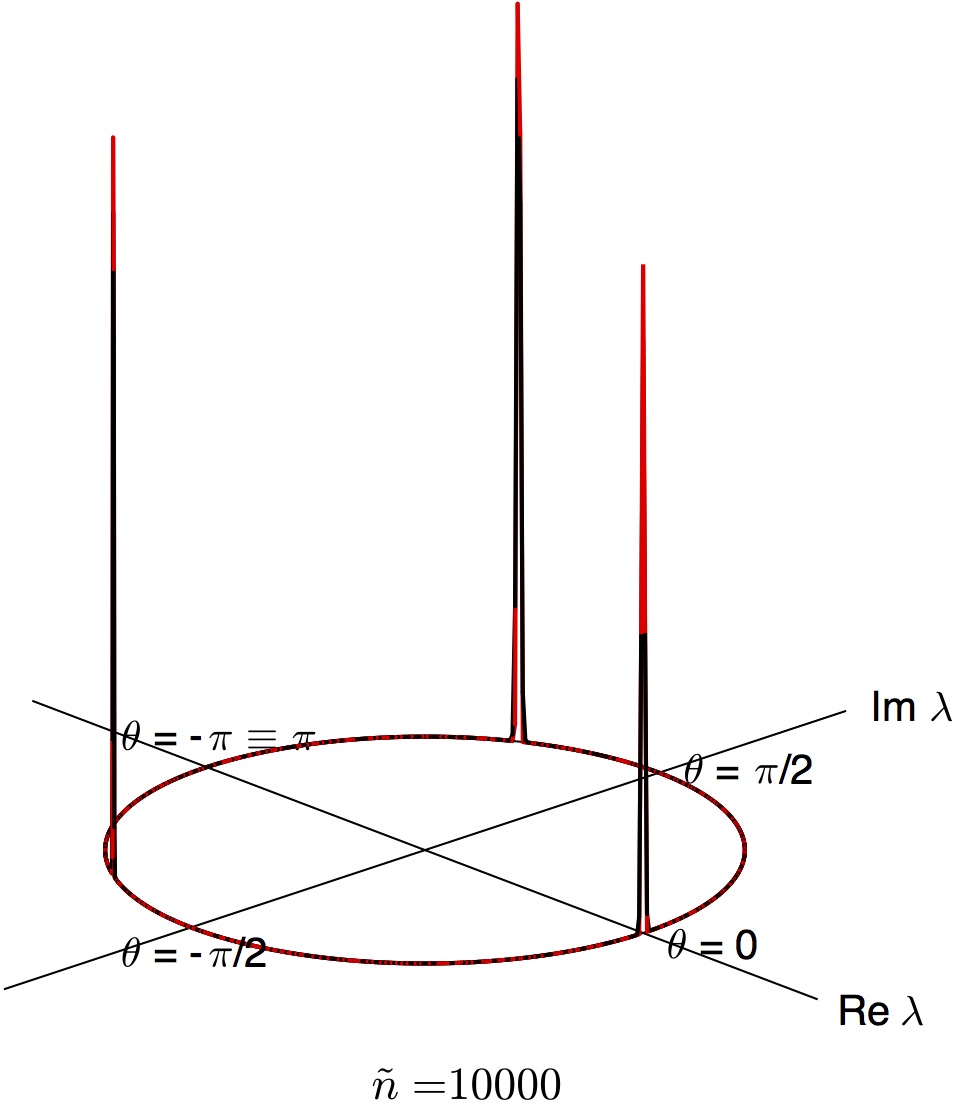}
	\end{center}
	\caption{Approximations of the spectral density function \eqref{eq:spectraldensity} for the translation map \eqref{eq:translation} with $m=1$ and $\omega_1 = 1/3$. Spectra is approximated for the observable \eqref{eq:1Dobservable} with $\alpha=  2\pi / 500$. The black curve is approximation and the red curve is the true density.} \label{fig:densityplotcircle}
	\begin{center}
		\subfigure[Spectral projections of the observable at narrow interval around the eigenvalues $\theta_k \tfrac{2\pi(k-1)}{6} -0.02$. ]{ \begin{tabular}{cc} \includegraphics[width=.46\textwidth]{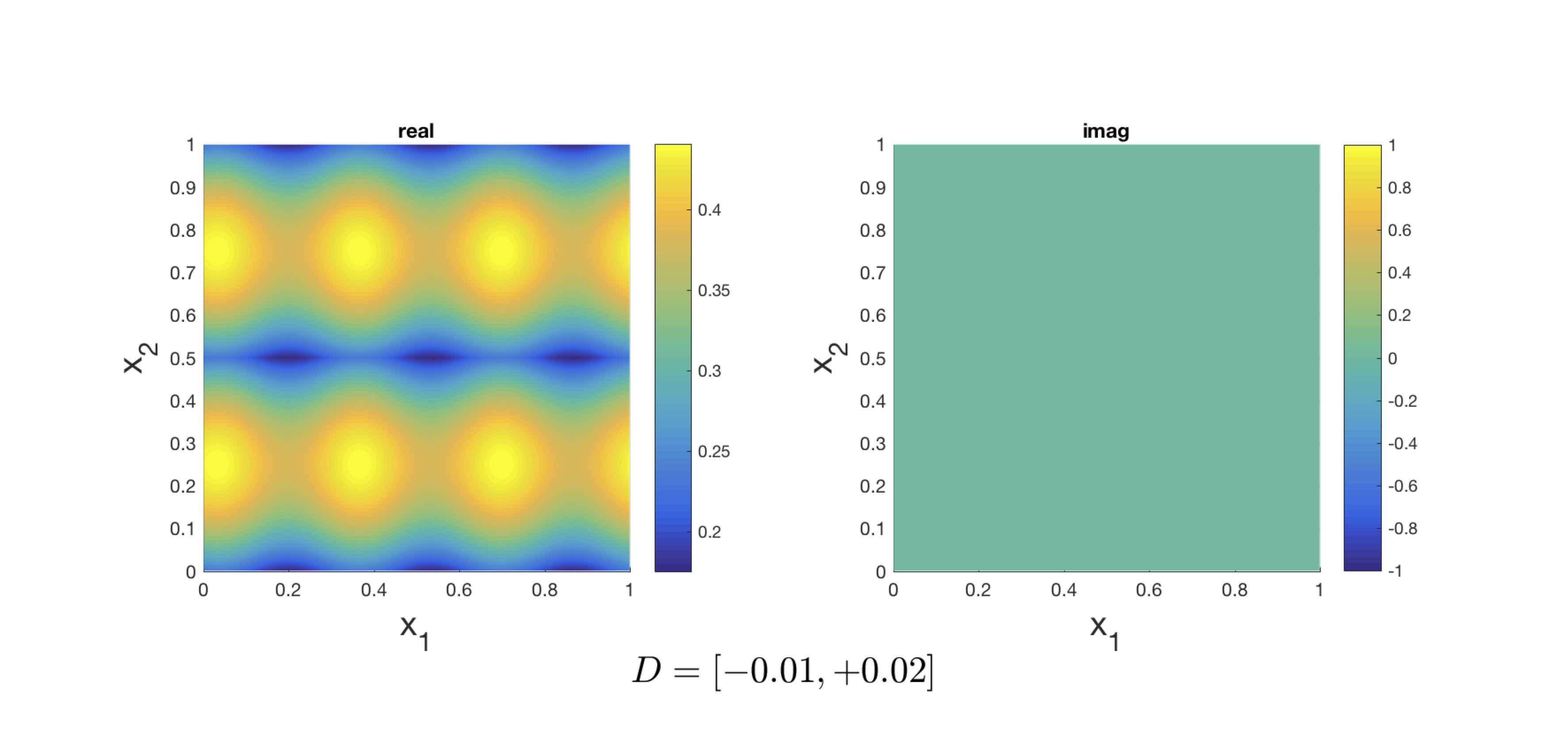} & \includegraphics[width=.46\textwidth]{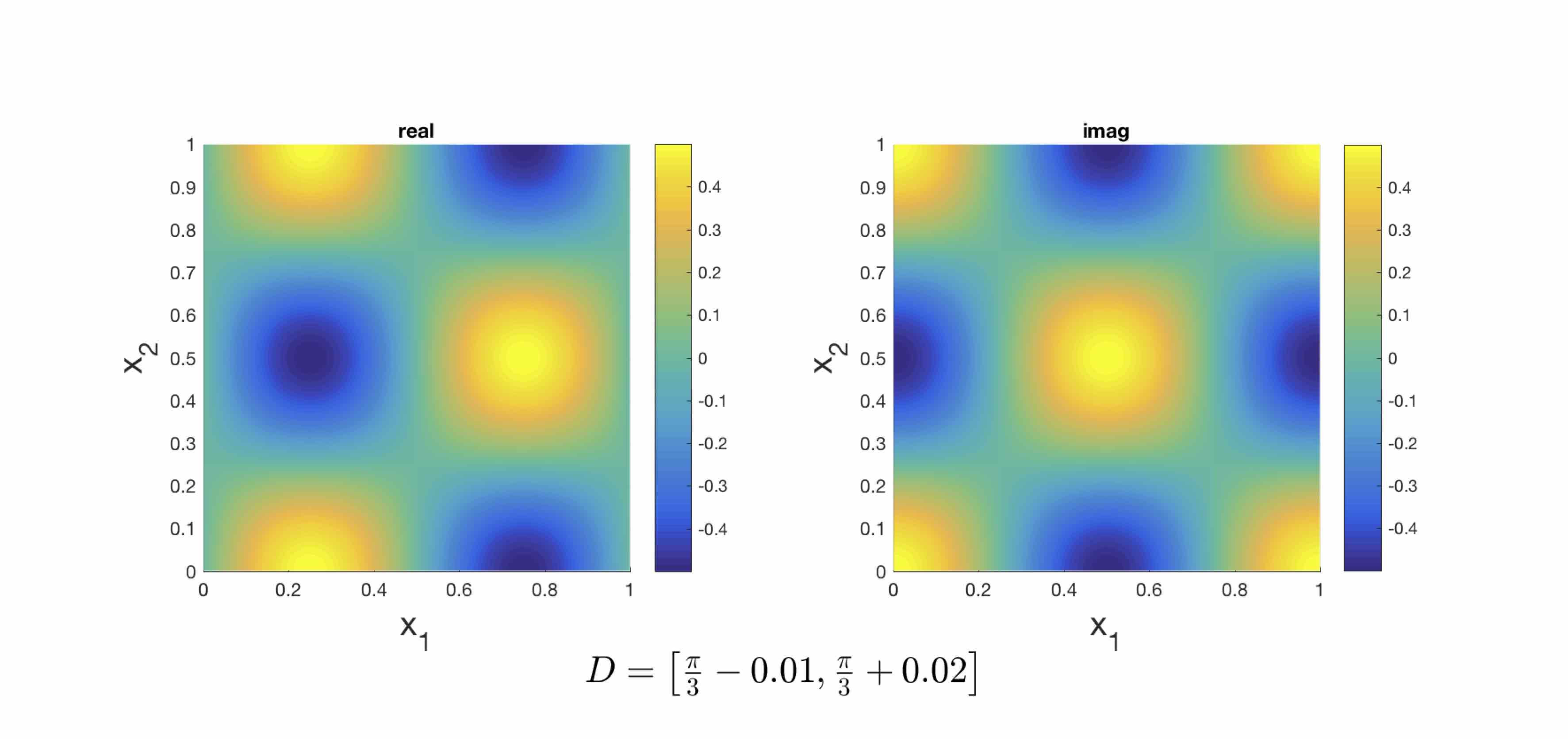}  \\
				\includegraphics[width=.46\textwidth]{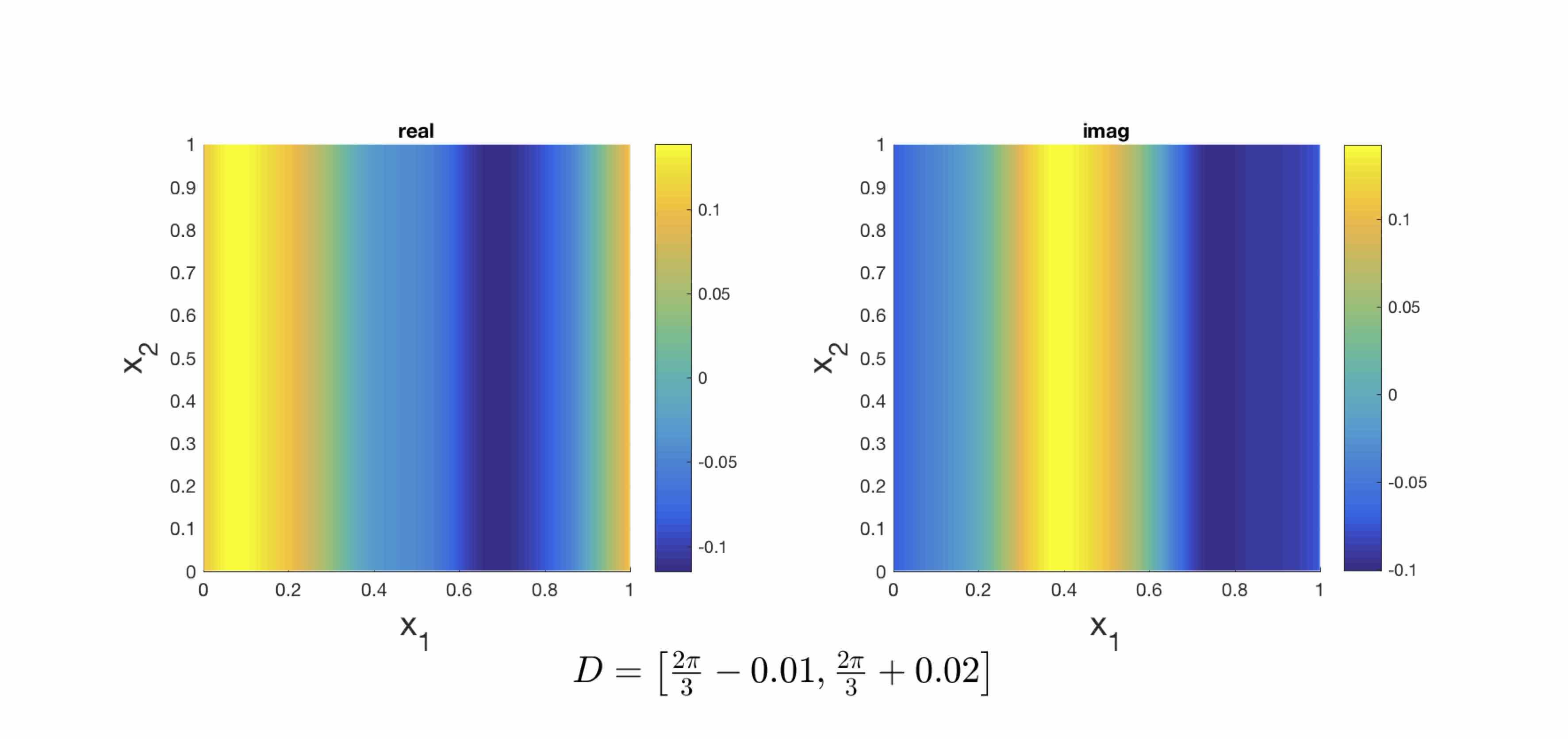} & \includegraphics[width=.46\textwidth]{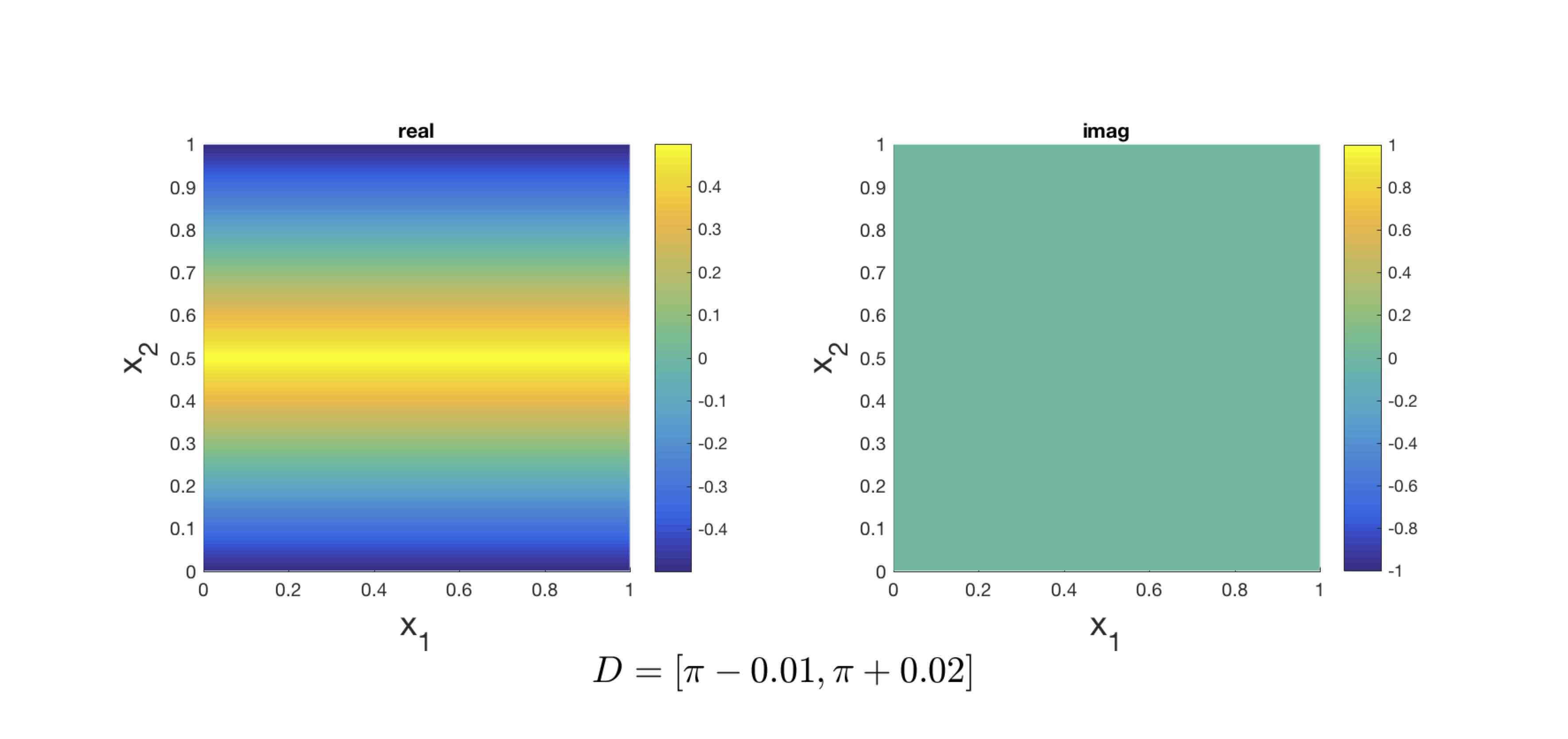}  \\
				\includegraphics[width=.46\textwidth]{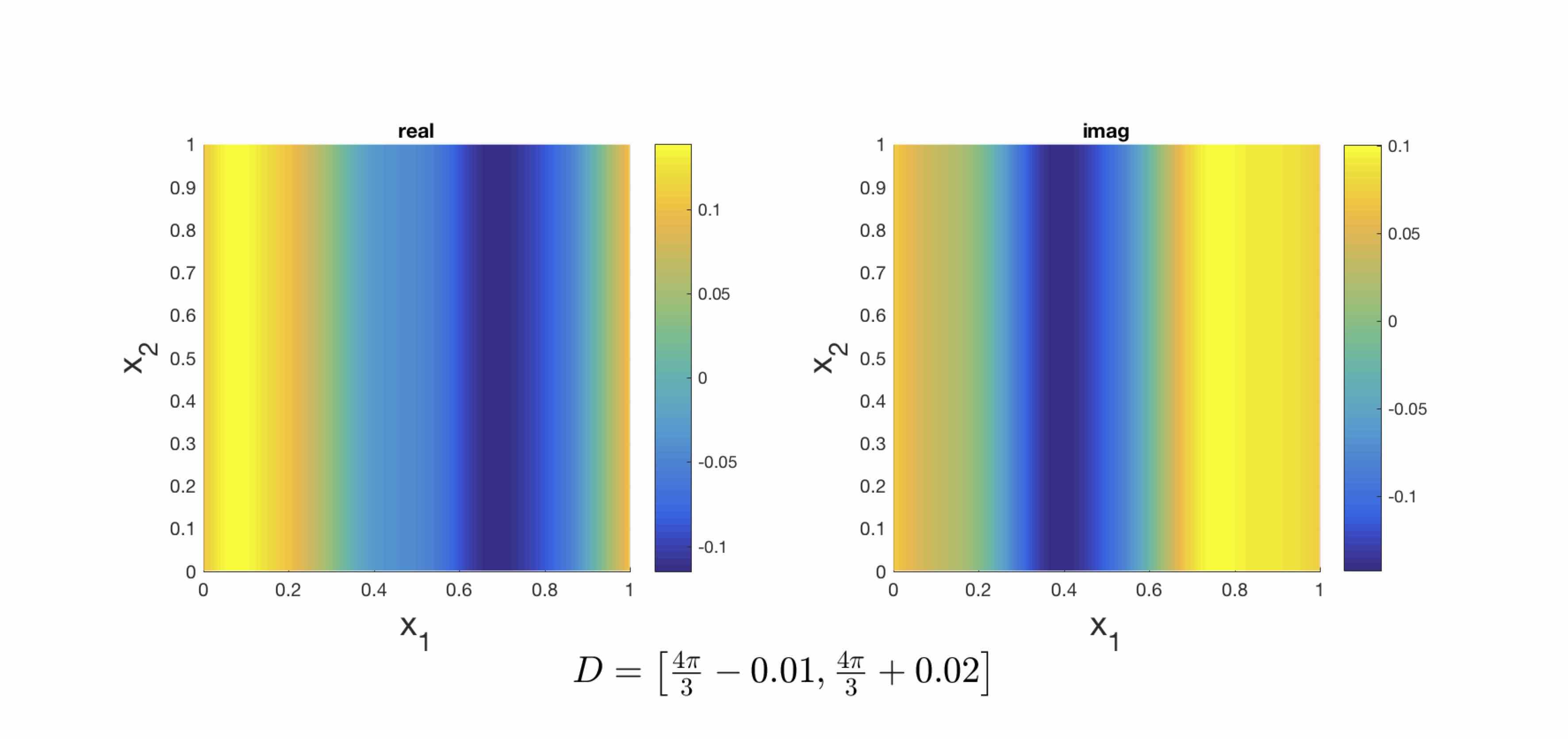} & \includegraphics[width=.46\textwidth]{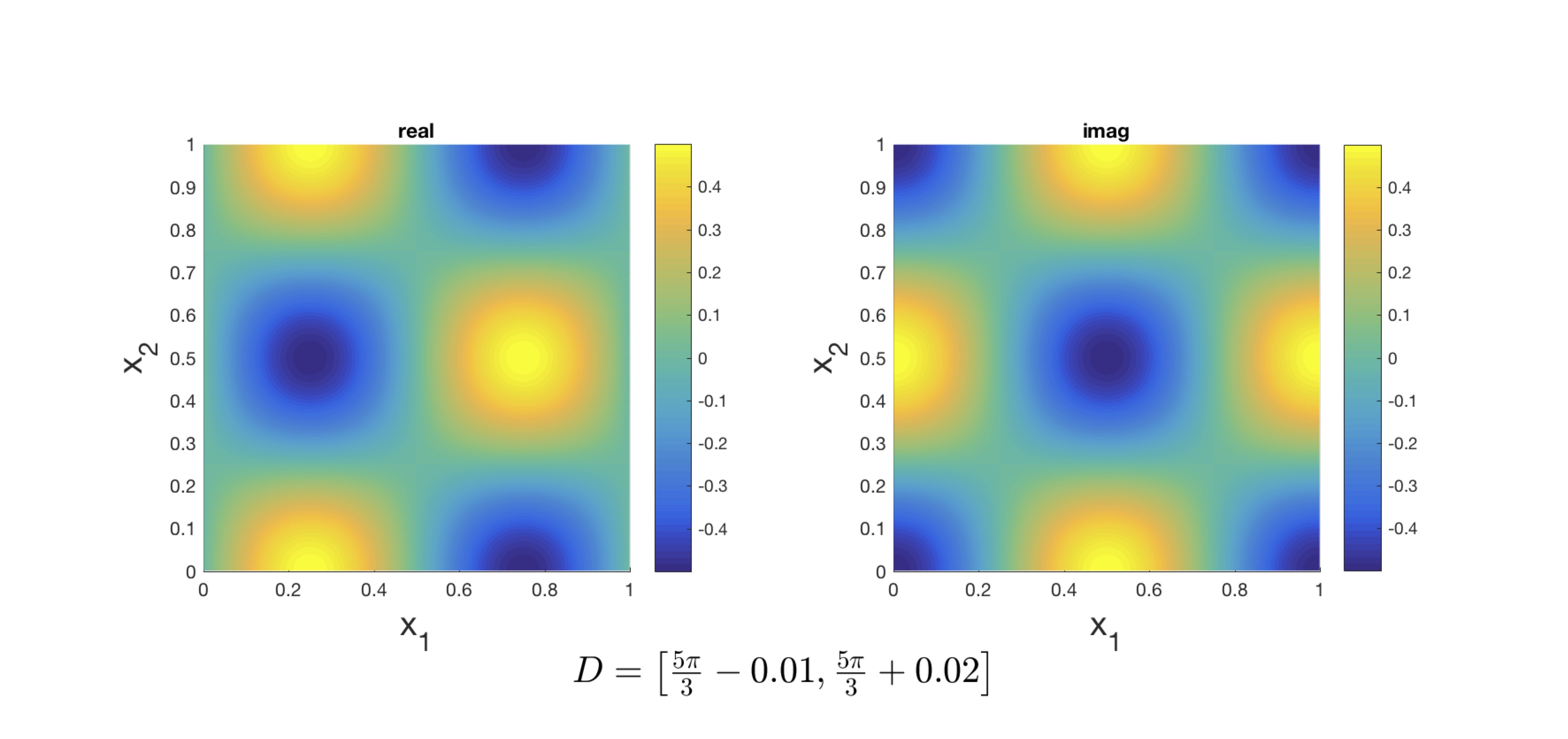}   \end{tabular}}
	\end{center}
	\caption{Results obtained for the translation map \eqref{eq:translation} with $m=2$ and $(\omega_1, \omega_2) = (\tfrac{1}{2}, \tfrac{1}{3})$. Computations were performed for the observable \eqref{eq:observablewhatever} at a spatial discretization of $\tilde{n} = 1000$.   }  \label{fig:6periodic}
\end{figure}
\iffalse
the root-squared-mean error:
\begin{equation}
\mathrm{rsm} := \left( \frac{1}{q(n)}\sum_{p_{n\jhatbold}\in \mathcal{P}_n} |(\Proj_D g)(\psi(p_{n,\jhatbold})) - (\Proj_{n,D} g_n)(\psi(p_{n,\jhatbold})) |^2  \right)^{\frac{1}{2}} \approx \left\| \Proj_{D} g - \Proj_{n,D} g_n \right\| 
\end{equation}
\fi
In \cref{fig:6periodic}, we examine the case when $d=2$ and $(\omega_1, \omega_2) = (\tfrac{1}{2}, \tfrac{1}{3})$. The trajectories of this map are all 6-periodic, and eigenvalues are also located 6th roots of unity  (i.e. $e^{i\frac{2\pi (k-1)}{6}}$, with $k=1,2,\ldots,6$). Shown are spectral projections \eqref{eq:targetapprx} of the observable: 
\begin{equation}
g(x) = \sin(2 \pi x_1 ) \cos(2 \pi x_2) +  \sin(\pi x_2) +  \frac{1}{\sin^2(\pi x_1)+1} - 1 \label{eq:observablewhatever}
\end{equation}
around narrow intervals of these eigenvalues.

\subsection{Arnold's cat map}
Consider the area-preserving map $T:\mathbb{T}^2\mapsto \mathbb{T}^2$, defined in \eqref{eq:catmap}:
$$
T(x) = \begin{bmatrix} 2 & 1 \\ 1 & 1 \end{bmatrix} \begin{bmatrix} x_1 \\ x_2 \end{bmatrix} \mod{1}.
$$
Arnold's cat map \eqref{eq:catmap} is a well-known example of a Anosov diffeomorphism, in which the dynamics are locally characterizable by expansive and contractive directions. The map \eqref{eq:catmap} has positive Kolmogorov entropy, is strongly mixing, and therefore also ergodic. 
\subsubsection{Singular and regular subspaces}
The Koopman operator associated with Arnold's cat map is known to have a  ``Lebesgue spectrum'' (see e.g. \cite{Arnold1989}). A Lebesgue spectrum implies the existence  of an orthonormal basis for $L^2(\mathbb{T}^2, \mathcal{B}(\mathbb{T}^2), \mu)$ consisting of the constant function and $\{\varphi_{s,t}\}_{s\in I, t\in \mathbb{Z} }$, $I\subseteq \mathbb{N}$, such that:
$$  \K^l \varphi_{s,t} = \varphi_{s,t+l}, \quad l\in \mathbb{Z}. $$
It follows from such a basis that the eigenspace at $\lambda=1$ is simple, i.e. consisting only the constant function, with the remaining part of the spectrum absolutely continuous. In the case of the Cat map, the basis $\{\varphi_{s,t}\}_{s\in I, t\in \mathbb{Z} }$ is just a specific re-ordering of the Fourier elements $ \{  e^{2\pi i  (k_1 x_1 + k_2 x_2)} \}_{(k_1,k_2)\in\mathbb{Z}^2}$. Indeed, by applying the Koopman operator \eqref{eq:KOOPMAN} onto a Fourier element, we observe the transformation:
\begin{equation} 
\K  e^{2\pi i  (k_1 x_1 + k_2 x_2)} = e^{i 2\pi ({k'}_1 x_1 + {k'}_2 x_2)} , \qquad\begin{array}{lcl}  {k'}_1 & = & 2 k_1 + k_2 \\ {k'}_2 & = & k_1 + k_2 \end{array}, \qquad (k_1, k_2)\in \mathbb{Z}^2.   \label{eq:orbits}
\end{equation}
Partitioning $\mathbb{Z}^2$ into the orbits of \eqref{eq:orbits}, i.e. $ \mathbb{Z}^2 = \xi_{0} \cup  \xi_{1}  \cup \xi_{2} \cup \xi_{3} \ldots$, 
we notice that all orbits, except for $\xi_0 = \{(0,0)\}$, consists of countable number of elements. The singular and regular subspaces are respectively:
$$ H_s =   {\spn \{ 1   \}},\qquad H_r  =  \overline{\spn \{ e^{i 2\pi (k_1 x_1 + k_2 x_2)}:\quad (k_1,k_2)\ne 0  \}}.   $$

\subsubsection{Spectral density function}
The spectral density function \eqref{eq:densityapprx} of a generic observable is found by solving the trigonometric moment problem \cite{Akhiezer1963}:
$$ d_l := \langle \K^l g, g \rangle = \int_{\mathbb{T}^2} \left( \K^l g \right)^* (x) g(x)  \dd\mu =  \int_{[-pi,\pi)} e^{il\theta} \rho(\theta;g) \dd\theta, \quad l\in \mathbb{Z}.$$
If the observable consists of a single Fourier element, we note: (i) $(k_1,k_2) = (0,0)$ implies $d_l=1$, $\forall l \in \mathbb{Z}$ which leads to $\rho(\theta; 1) = \delta(\theta)$ (i.e. Dirac delta distribution),  (ii)  $(k_1,k_2) \ne (0,0)$ implies $d_l = 0$  $\forall l\ne 0$ and $d_0 = 1$, leading to $\rho(\theta; e^{i 2\pi (k_1 x_1 + k_2 x_2)}) = \tfrac{1}{2\pi}$ (i.e. a uniform density). By virtue of these properties, the spectral density function \eqref{eq:densityapprx} of a generic observable in the form:
\begin{equation} g = a_0 + \sum_{s\in \mathbb{N}} \sum_{t\in \mathbb{Z}} a_{s,t} \varphi_{s,t} \in L^2(\mathbb{T}^2, \mathcal{B}(\mathbb{T}^2), \mu),\qquad {|a_0|^2} + \sum_{s\in \mathbb{N}} \sum_{t\in \mathbb{Z}} | a_{s,t}|^2 < \infty,   \label{eq:glebesgue}
\end{equation}
can be expressed as:
\begin{equation} \rho(\theta;g) =  |a_0|^2 \delta(\theta) +   \frac{1}{2\pi}     \sum_{l\in \mathbb{Z}}\left(\sum_{s\in \mathbb{N}} \sum_{t\in \mathbb{Z}} a^*_{s,t-l} a_{s,t}  \right) e^{i l\theta} .      \label{eq:specdensitycatmap}
\end{equation}
\subsubsection{Spectral projectors} For an interval $D = [c-\delta,c+\delta) \subset [-pi,\pi)$, one can use the Fourier series expansions of the indicator function: 
\begin{equation} \chi_D  = \sum_{l\in \mathbb{Z}} b_l(D) e^{il\theta}, \qquad b_l(D) := \int_{[-pi,\pi)} e^{-i\theta} \chi_D(\theta) \dd \theta =  \frac{1}{2 \pi} \begin{cases} \frac{i}{ l}e^{-i c l}( e^{-i \delta l} - e^{i \delta l} ) & l \ne 0 \\ {2 \delta}  & l = 0  \end{cases}.  \label{eq:indD}
\end{equation}
to obtain an approximation of the spectral projector with the help of functional calculus. Using the Lebesgue basis in \eqref{eq:glebesgue}, the following expression for the spectral projection can be obtained: 
$$ \Proj_D g  = \mathbb{\nu}(D) + \sum_{l\in \mathbb{Z}} b_l(D) \K^l g =  a_0 \mathbb{\nu}(D) +  \sum_{l\in \mathbb{Z}} b_l(D) 
\left( \sum_{s\in \mathbb{N}} \sum_{t\in \mathbb{Z}} a_{s,t} \varphi_{s,t+l}\right)  $$
where the singular measure $\mathbb{\nu}(D) = 1$ whenever $0\in D$. The frequency content of the projection increases drastically as the width of the interval is shrinked.  Assuming $a_0 = 0$, and $\left\| g\right\| = 1$, we see that:
$$ \frac{\Proj_D g}{ \left\| \Proj_D g \right\|}  =  \sum_{l\in \mathbb{Z}} \frac{b_l(D)}{2\delta} 
\left( \sum_{s\in \mathbb{N}} \sum_{t\in \mathbb{Z}} a_{s,t} \varphi_{s,t+l}\right), \qquad \left|\frac{b_l (D)}{2 \delta}\right|^2 = \frac{2-2\cos(l\delta)}{(2 \pi l \delta)^2}, 
\quad l\ne 0.$$
The Lebesgue basis functions $\varphi_{s,t}$ become increasingly oscillatory for larger values of $t$. Reducing the width of the interval places more weight on the higher frequency components. As $\delta\rightarrow 0$, the frequency content in fact becomes unbounded.

\subsubsection{Numerical results}
For observables that consist of only a finite number of Fourier components, a closed-form expression of the spectral density function can be obtained. For example:
\begin{equation}
\begin{aligned}[l]
g_1(x) &=  e^{2\pi i(2 x_1+ x_2)}\\
g_2(x) &=  e^{2\pi i(2 x_1+ x_2)} + \frac{1}{2}e^{(2\pi i(5 x_1+ 3 x_2))}\\
g_3(x) &= e^{2\pi i(2 x_1+ x_2)} + \frac{1}{2}e^{(2\pi i(5 x_1+ 3 x_2))}+ \frac{1}{4} e^{2\pi i (13 x_1 + 8 x_2) } 
\end{aligned}
\quad\Longrightarrow\quad
\begin{aligned}[l]
\rho(\theta;g_1) &= \frac{1}{2\pi}\\
\rho(\theta;g_2) &= \frac{1}{2\pi}\left( \frac{5}{4}+\cos \theta \right) \\
\rho(\theta;g_3) &= \frac{1}{2\pi} \left(\frac{21}{16}+\frac{10}{8} \cos \theta +\frac{1}{2} \cos 2\theta \right)
\end{aligned}   \label{eq:spectdensityg1g2g3}
\end{equation}
In \cref{fig:densityplotcatmap1}, the spectral densities of these observables are approximated with \eqref{eq:densityapprx} using the proposed method.  Clearly, the result indicate that better approximations are obtained by increasing the discretization level.   In \cref{fig:catmapprojections}, the spectral projections for the first observable are plotted. As expected, shrinkage of the interval leads to more noisy figures.  
\begin{figure}[h!]
	\begin{center}
		\subfigure[Approximation of $\rho(\theta;g_1)$. ]{\includegraphics[ width=.24\textwidth ]{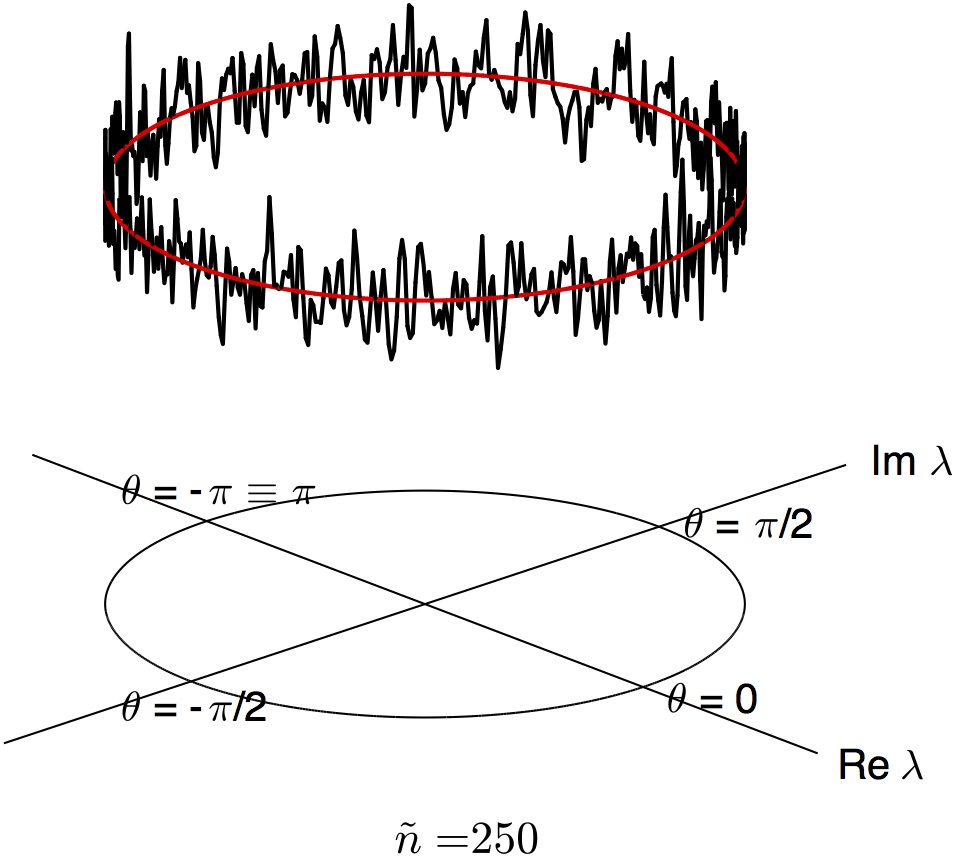}\includegraphics[width=.24\textwidth]{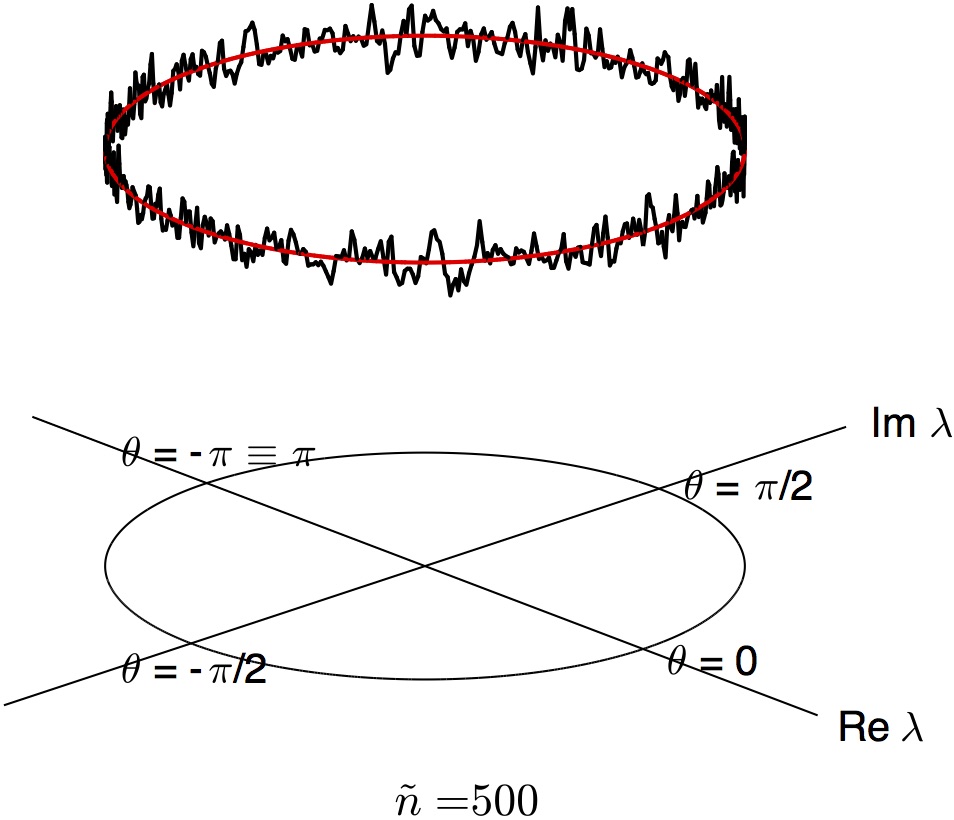}\includegraphics[width=.24\textwidth]{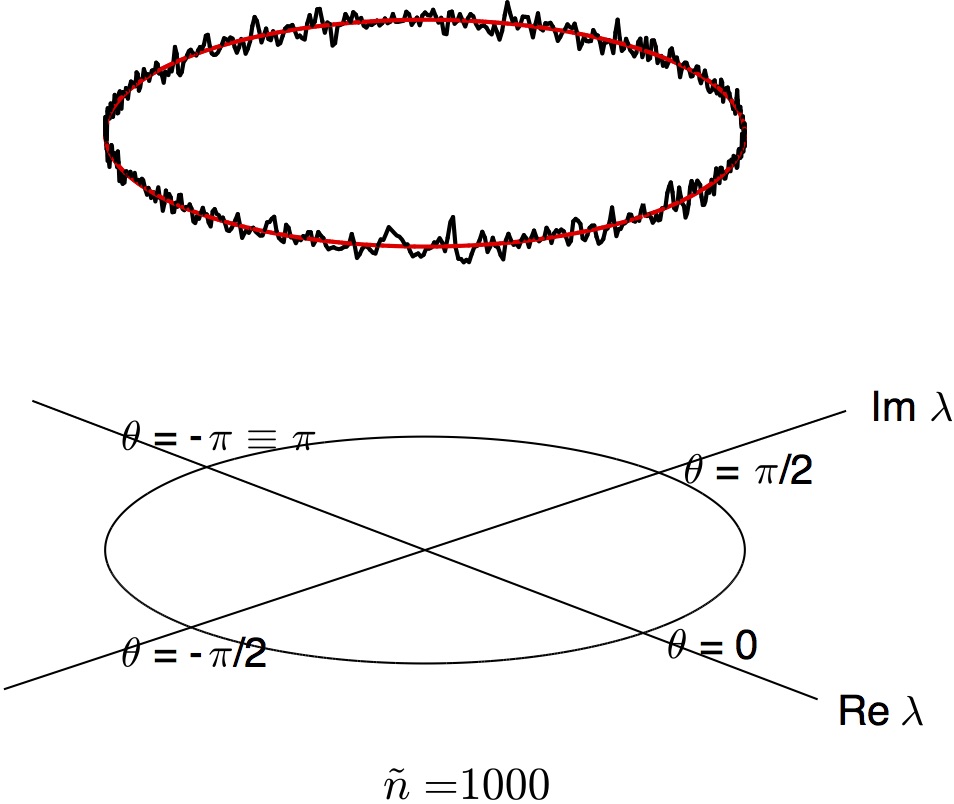}\includegraphics[width=.24\textwidth]{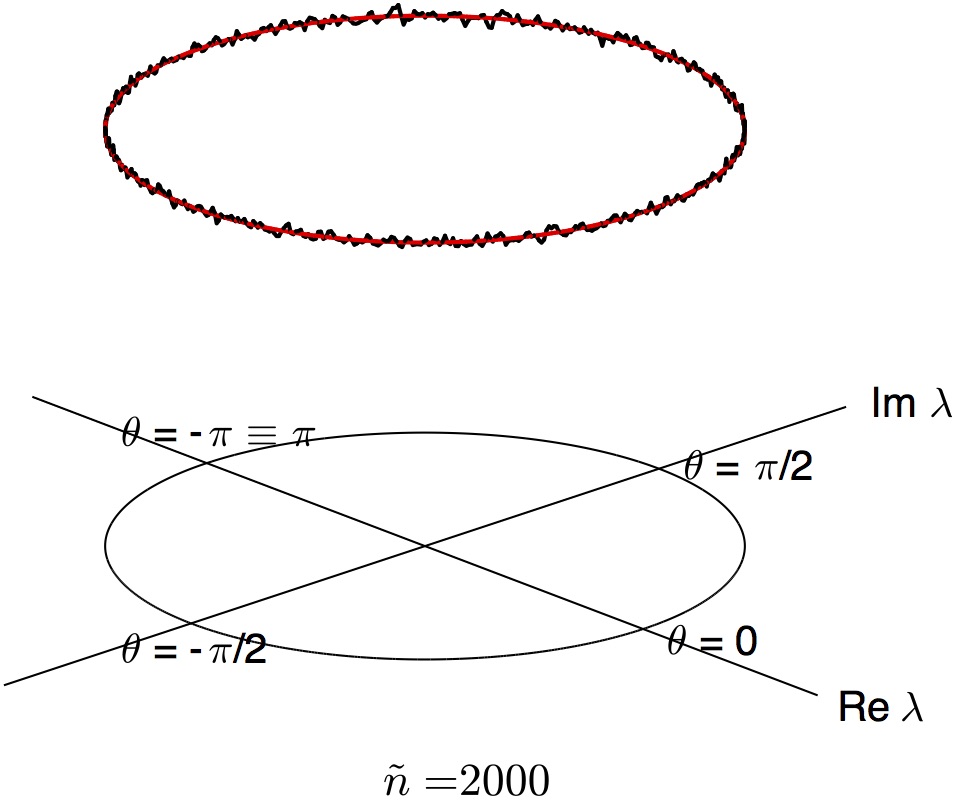}  } \\
		\subfigure[Approximation of $\rho(\theta;g_2)$. ]{\includegraphics[ width=.24\textwidth ]{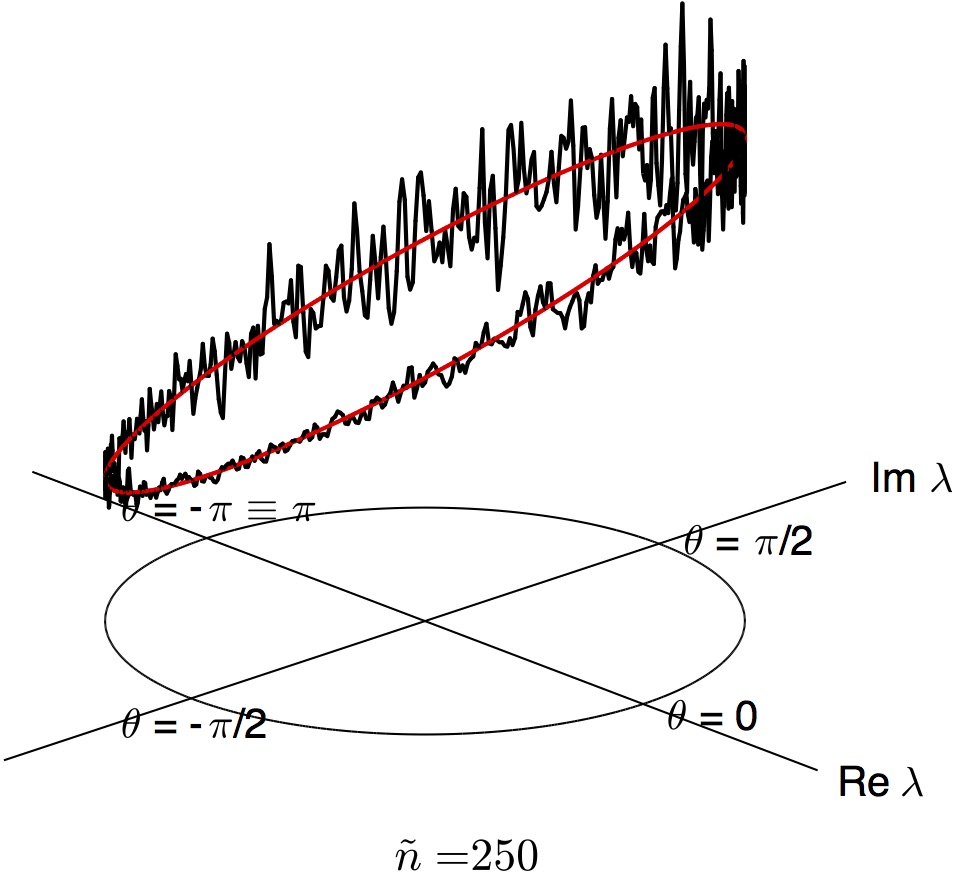}\includegraphics[width=.24\textwidth]{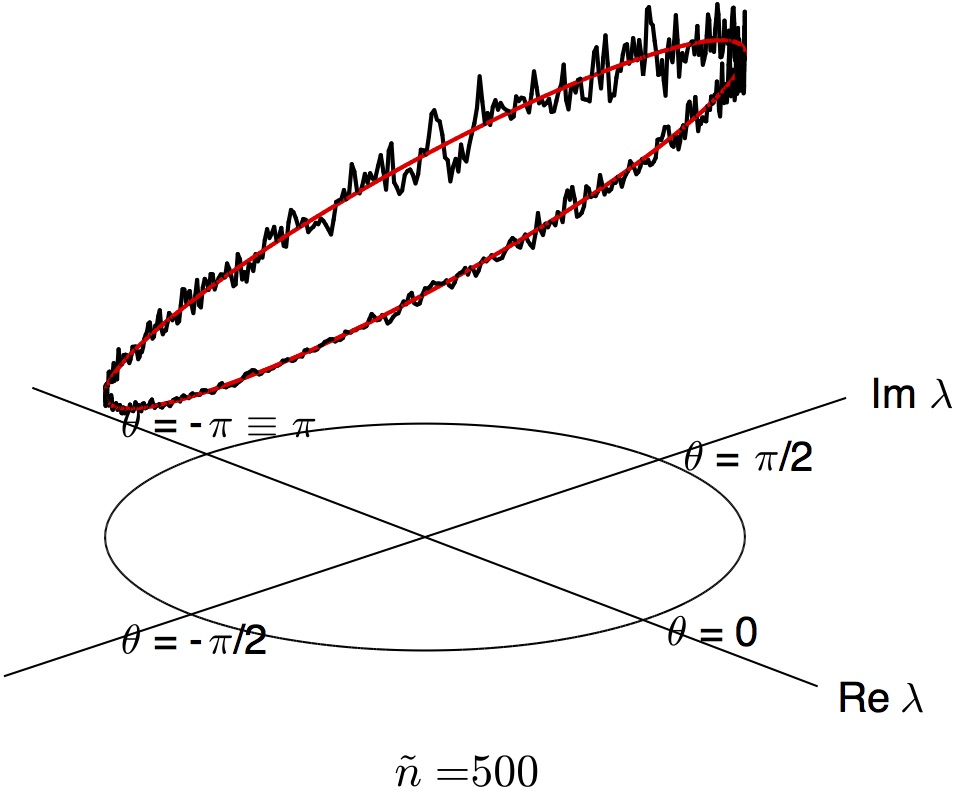}\includegraphics[width=.24\textwidth]{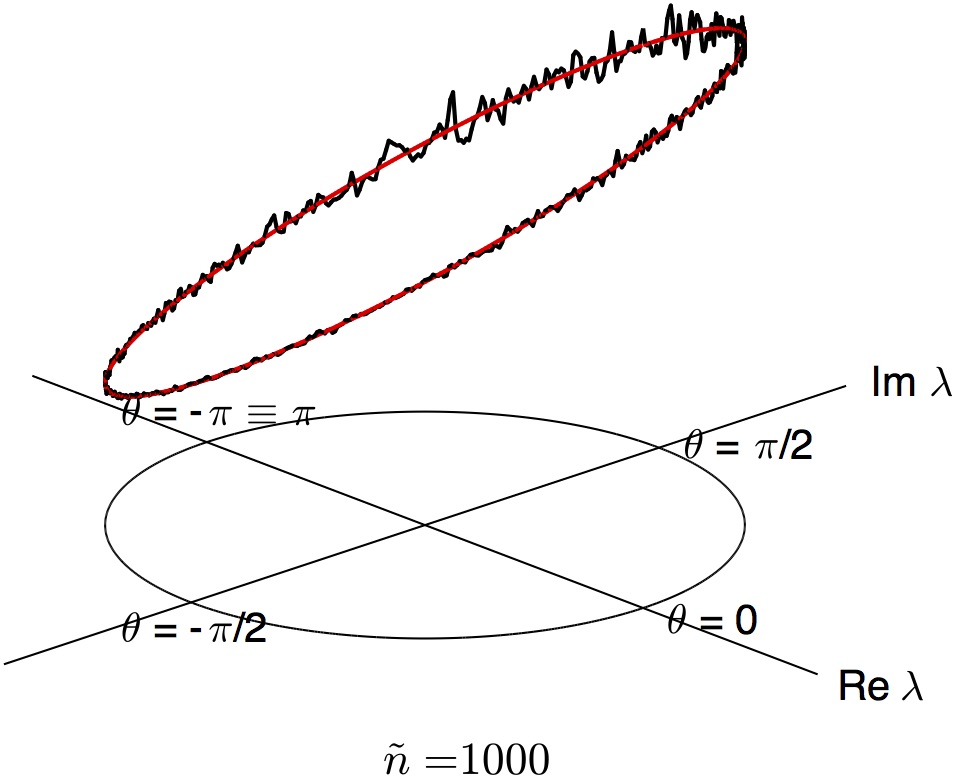}\includegraphics[width=.24\textwidth]{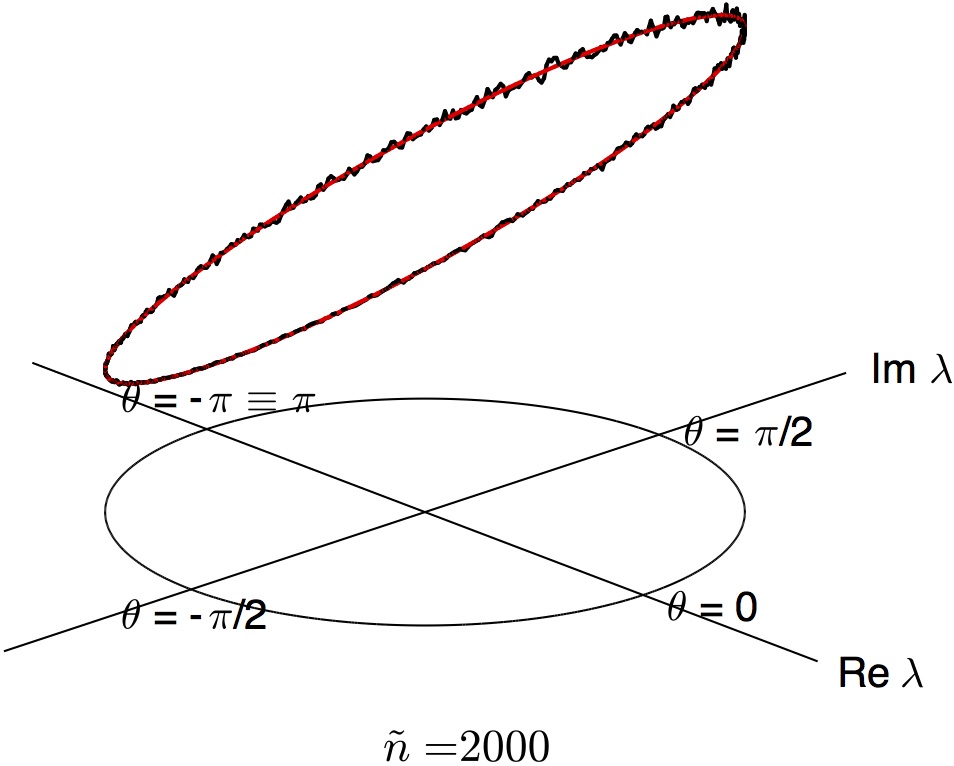}  } \\
		\subfigure[Approximation of $\rho(\theta;g_3)$.]{\includegraphics[ width=.24\textwidth ]{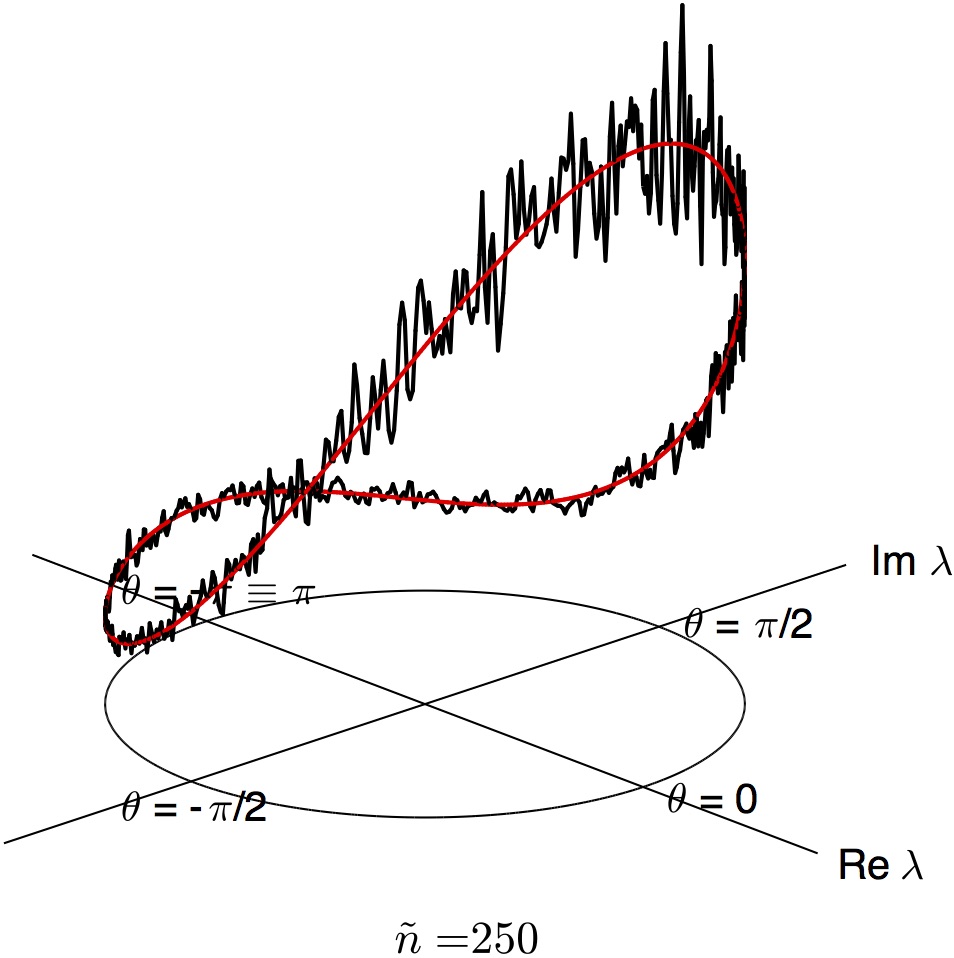}\includegraphics[width=.24\textwidth]{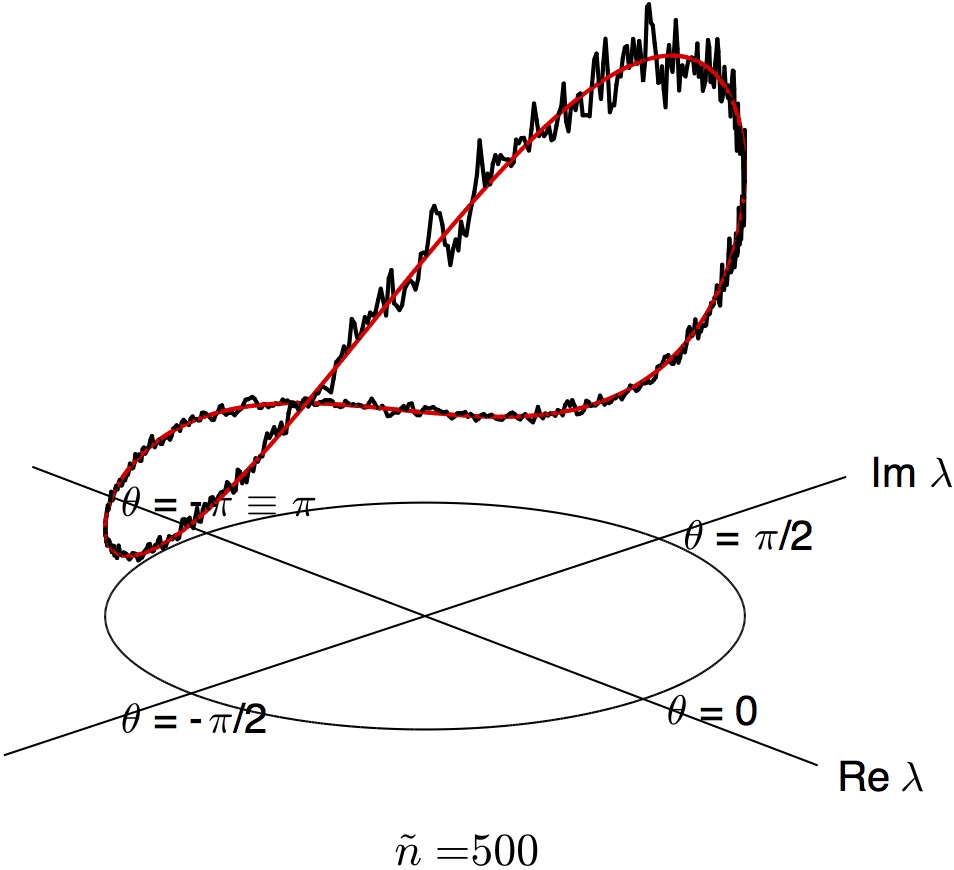}\includegraphics[width=.24\textwidth]{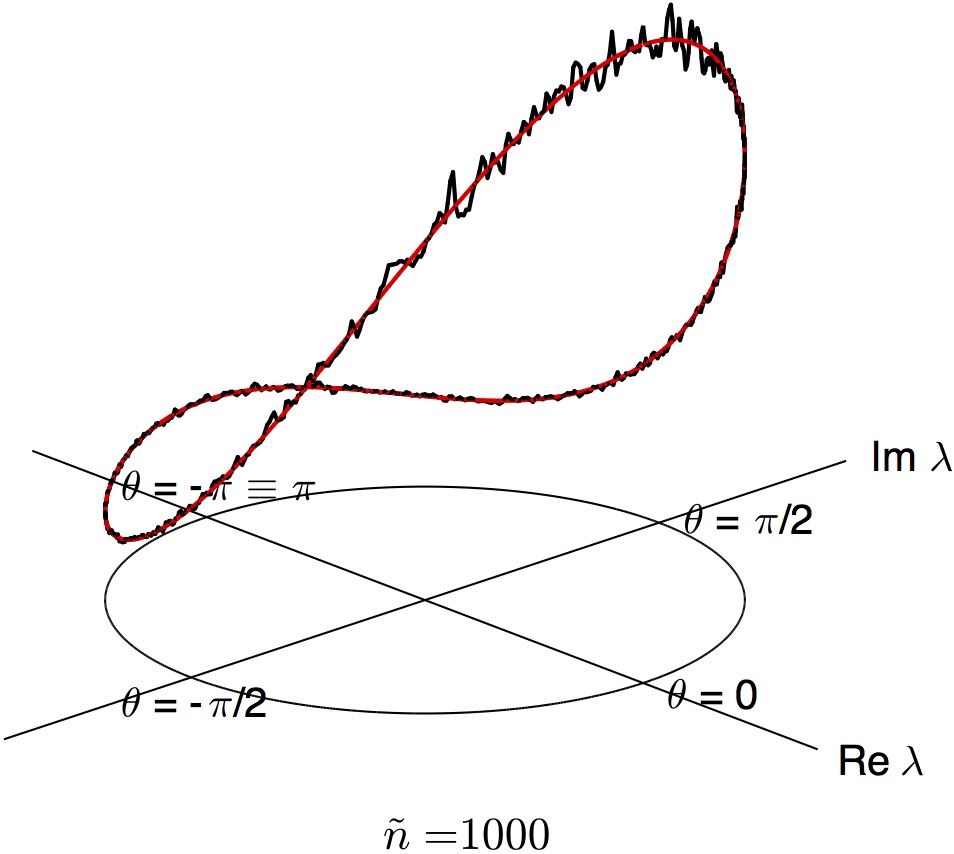}\includegraphics[width=.24\textwidth]{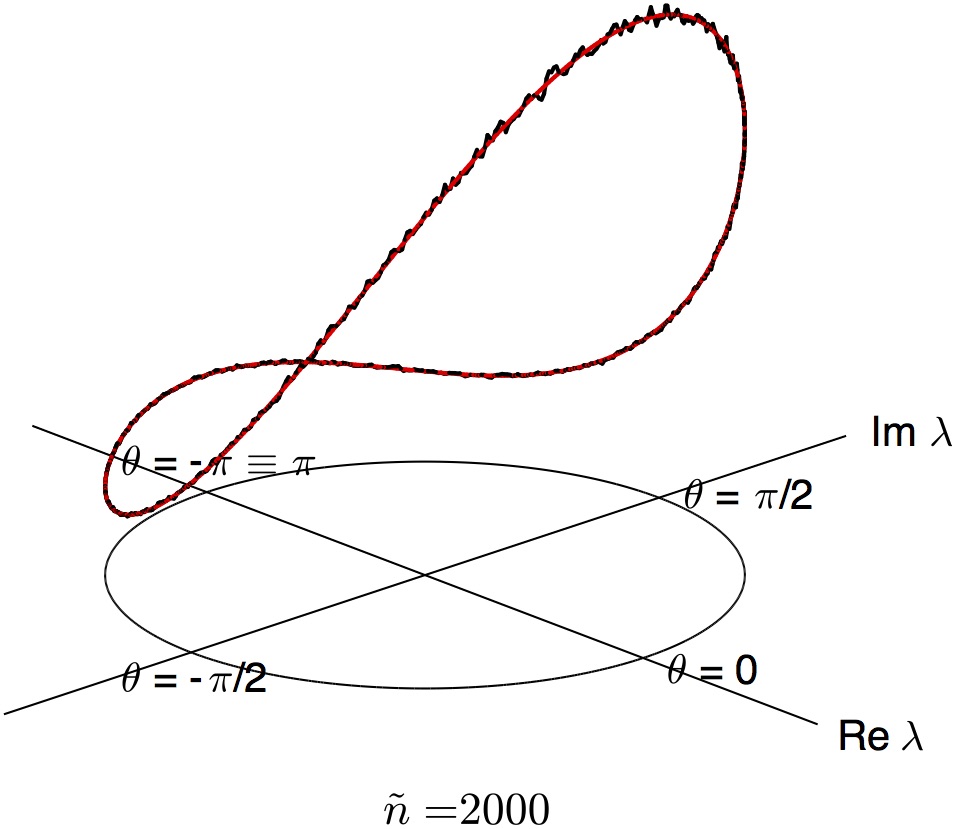}  }
	\end{center}
	\caption{Approximations of the spectral density function of the cat map \eqref{eq:catmap} for the observables in \eqref{eq:spectdensityg1g2g3}. The spectral resolution is set to $\alpha = 2\pi / 500$. The black curves denote the approximations and the red curve is the true density.} \label{fig:densityplotcatmap1}
\end{figure}
\clearpage
\begin{figure}[h!]
	\begin{center}
		\begin{tabular}{cc}
			\includegraphics[width=.47\textwidth]{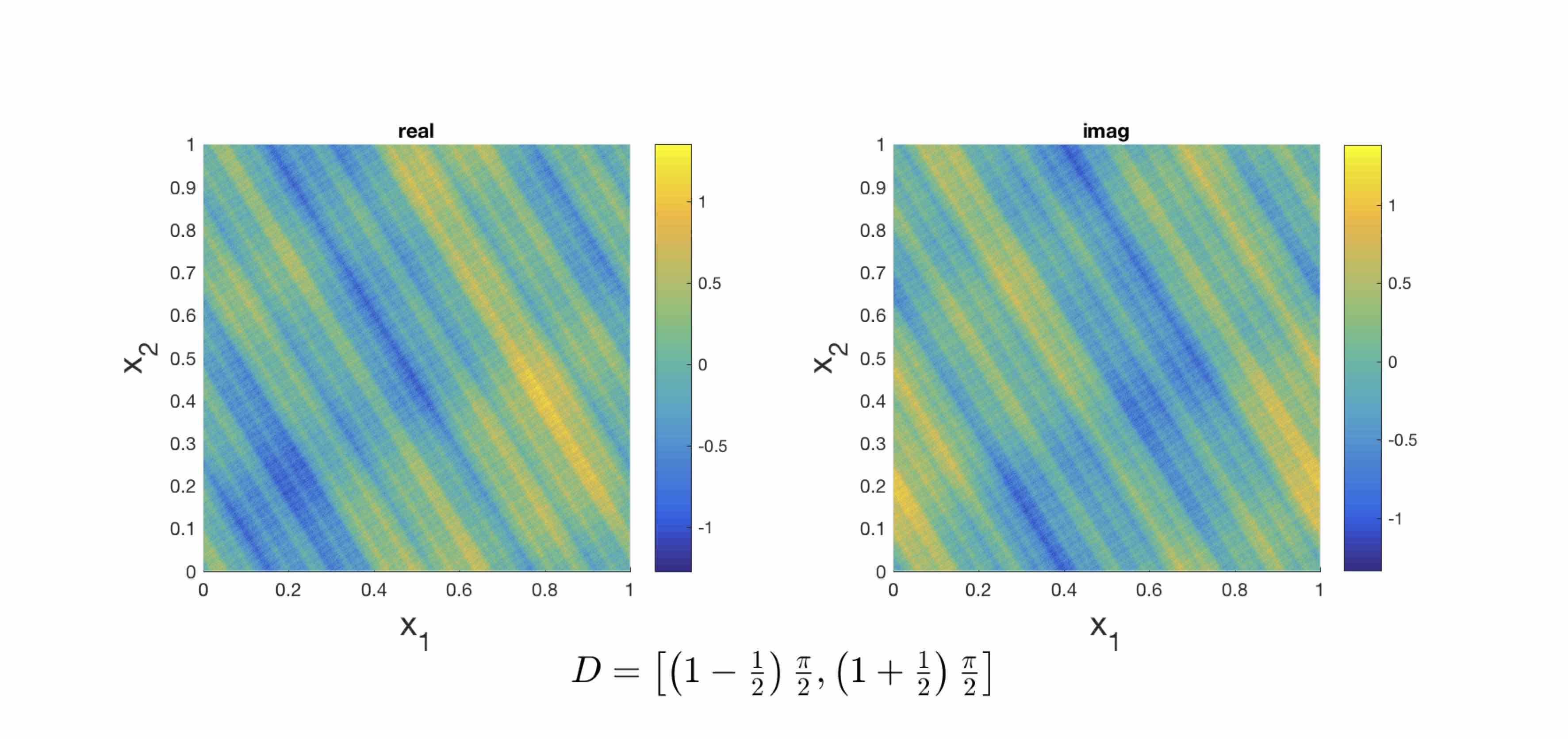} &  \includegraphics[width=.47\textwidth]{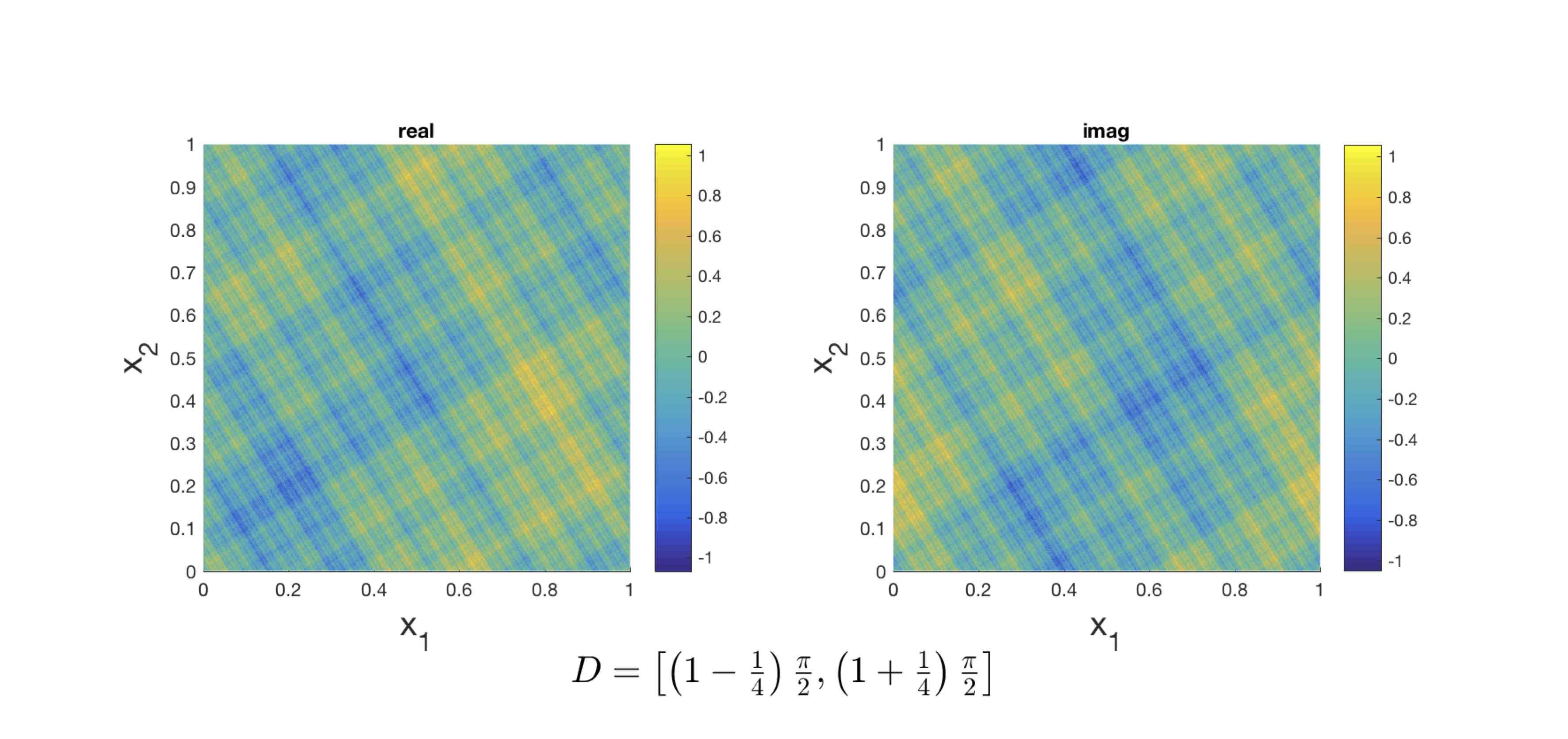} \\
			\includegraphics[width=.47\textwidth]{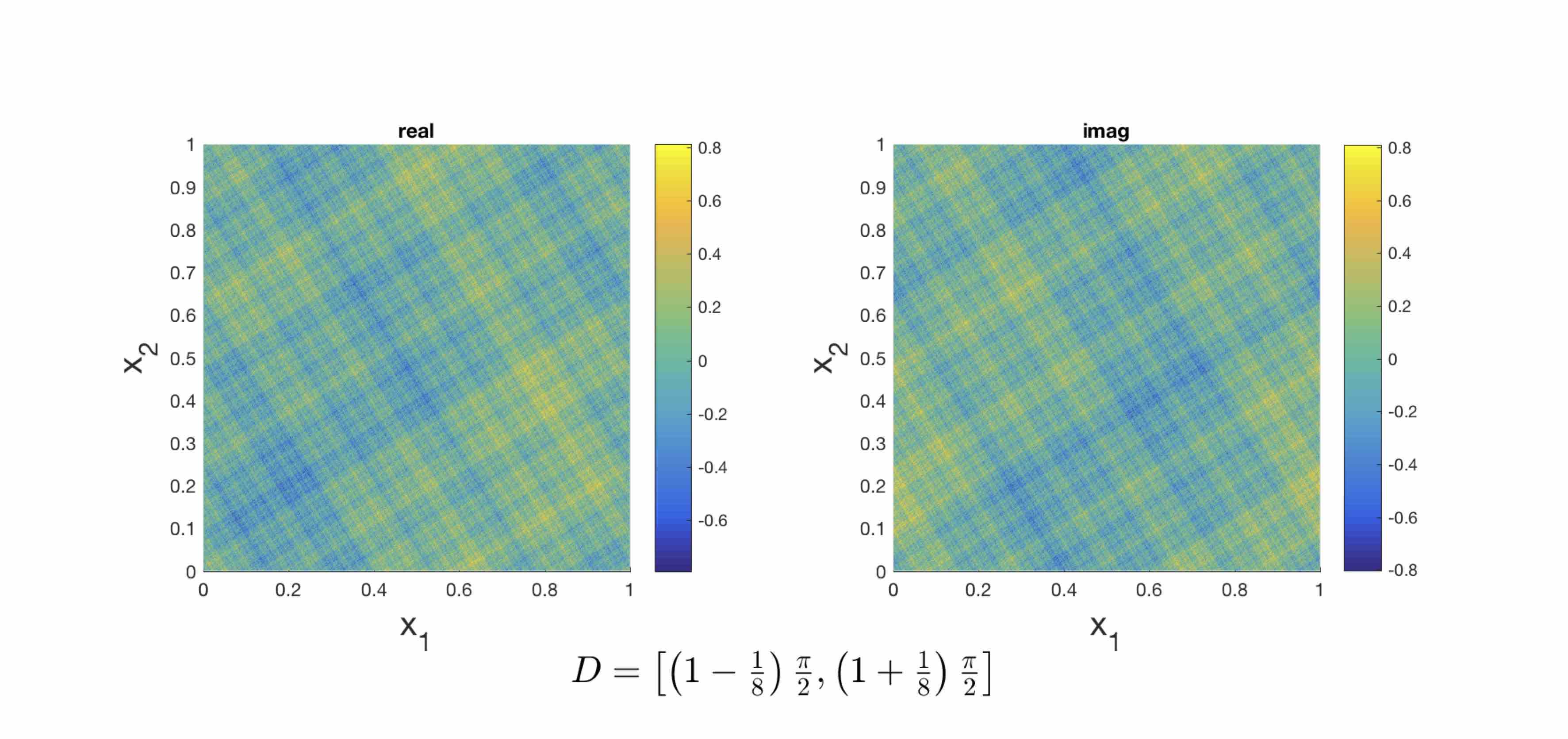} & \includegraphics[width=.47\textwidth]{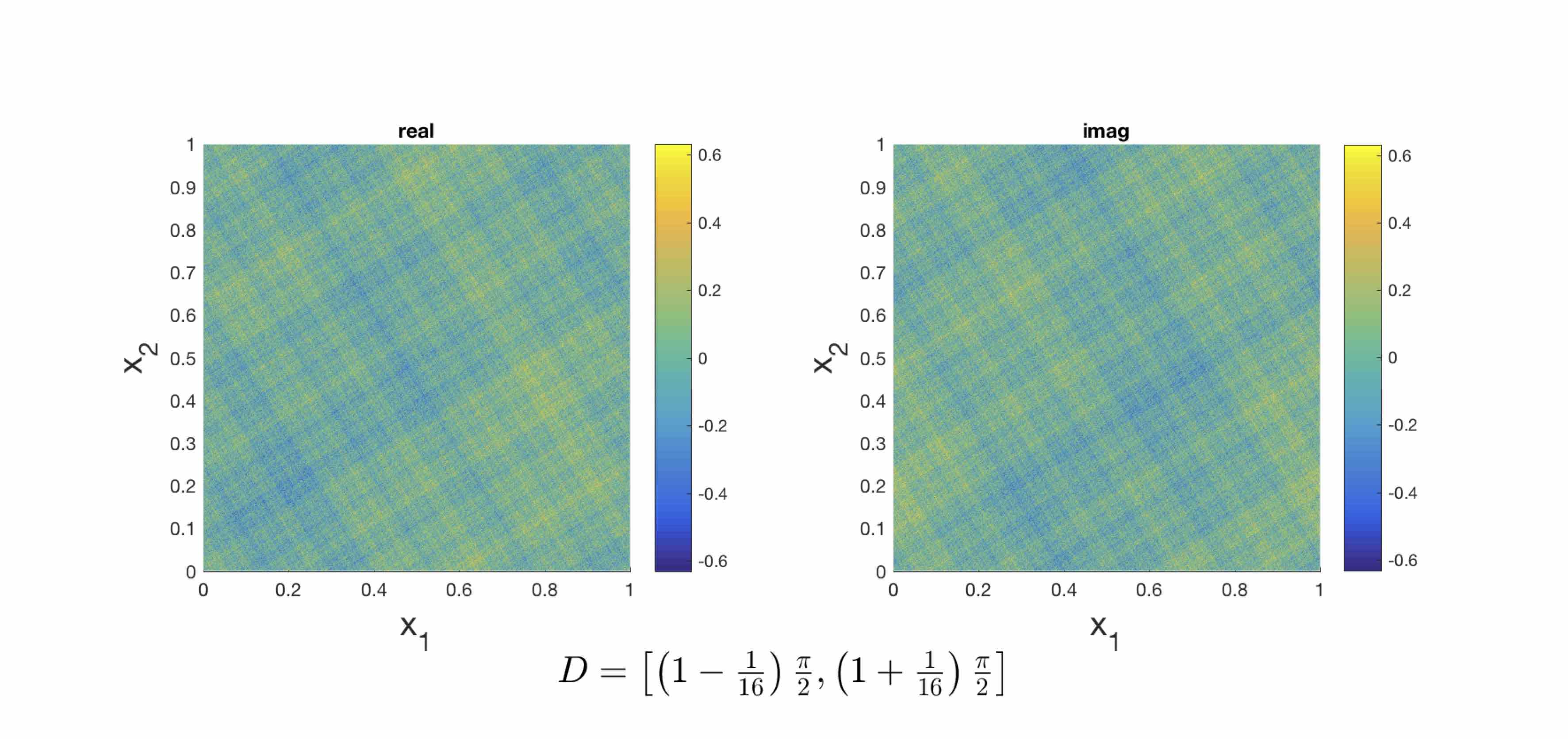} 
		\end{tabular}
	\end{center}
	\caption{The spectral projections of cat map \eqref{eq:catmap} for the  observable in \eqref{eq:spectdensityg1g2g3}. Projections were computed at the discretization level $\tilde{n}=2000$. } \label{fig:catmapprojections}
\end{figure}

\subsection{Anzai's skew-product transformation}
One can use skew-products of dynamical systems to construct examples of maps that have mixed spectra \cite{anzai1951ergodic}. An example of such a transformation is the map \eqref{eq:skewproduct}:
\begin{equation}
T(x_1, x_2) = (x_1+\gamma, x_1 + x_2) \mod 1,
\end{equation}
for which $\gamma\in [0,1)$.  The map \eqref{eq:skewproduct} is the composition of a translation and a shear. The transformation is ergodic whenever $\gamma\in\mathbb{R}$ is irrational.
\subsubsection{Singular and regular subspaces} The mixed spectrum of the operator is recognized by examining the cyclic subspaces generated by the Fourier basis elements. Essentially, Fourier elements that solely depend on the coordinate $x_1$ behave as if the dynamics of the map are that of a pure translation, whereas the remaining Fourier elements do observe a shear and belong to a cyclic subspace of infinite length. The singular and regular subspaces of the operator are respectively given by:
$$ H_s =   \overline{\spn \{ \sigma_{t} := e^{  2\pi i (t x_1)}, \quad t\in \mathbb{Z}    \} }   $$
and 
$$ H_r  =  \overline{\spn \{\varphi_{s,t,w} :=  e^{  2\pi i ((t+sw)x_1 + s x_2)},\quad s,w\in \mathbb{Z}, t\in\mathbb{N}  \mbox{ with }s\ne 0, t < |s|  \}}. $$ 
\subsubsection{Spectral density function}
Application of the Koopman operator yields:
$$ \K^l \sigma_{t} = e^{ 2\pi i lt \gamma} \sigma_{t} \qquad \mbox{and} \qquad   \K^l   \varphi_{s,t,w} =  e^{ 2\pi il \left( t + s \left(  w + \tfrac{l-1}{2}  \right) \right) \gamma }  \varphi_{s,t,w+l}. $$
Solving the trigonometric moment problem for observables expressed in the form:
$$ g = \sum_{t\in \mathbb{Z}} a_t \sigma_t  + \sum_{s\in \mathbb{Z}, s\ne 0} \sum_{t\in \mathbb{N}, t < |s|} \sum_{w\in \mathbb{Z}} c_{s,t,w} \varphi_{s,t,w}  \in L^2(\mathbb{T}^2, \mathcal{B}(\mathbb{T}^2), \mu), $$
yield the density function:
\begin{equation} \rho(\theta;g) =  \sum_{t\in \mathbb{Z}} |a_t|^2 \delta(\theta - 2\pi\gamma t \mod 2\pi) +   \frac{1}{2\pi} \sum_{l\in \mathbb{Z}} \left( \sum_{s\in \mathbb{Z}, s\ne 0} \sum_{t\in \mathbb{N}, t < |s|}  \left( \sum_{w\in \mathbb{Z}} e^{ -2\pi il \left( t + s \left(  w + \tfrac{l-1}{2}  \right) \right) \gamma } {c^*_{s,t,w-l}} c_{s,t,w}  \right)  \right) e^{i l\theta}.      \label{eq:specdensityanzai}
\end{equation}
\subsubsection{Spectral projectors} For the interval $D = [c-\delta,c+\delta) \subset [-pi,\pi)$, an expression for the spectral projection is given by:
$$ \Proj_D g = \sum_{2\pi t\gamma \mod 2\pi \in D} a_{{t}} \sigma_t +  \sum_{l\in \mathbb{Z}} b_l(D)\left( \sum_{s\in \mathbb{Z}, s\ne 0} \sum_{t\in \mathbb{N}, t < |s|} \sum_{w\in \mathbb{Z}} e^{2\pi il \left( t + s \left(  w + \tfrac{l-1}{2}  \right) \right) \gamma } c_{s,t,w}  \varphi_{s,t,w+l}  \right), $$
where the coefficients $b_l(D)$ are defined as in \eqref{eq:indD}.

\subsubsection{Numerical results}
Now set $\gamma=1/3$ and consider the observable:
\begin{equation}  g(x_1,x_2 )= \frac{1}{20}e^{2\pi i x_1} +\frac{1}{20}e^{4\pi i x_1} + \frac{1}{5}e^{6\pi i x_1} + e^{2\pi i x_2} + \frac{1}{2}e^{(2\pi i(x_1+  x_2))}
\label{eq:obsanzai} 
\end{equation}
The spectral decomposition is given by:
$$  \rho(\theta;g) =  \frac{1}{400} \left(\delta(\theta ) + \delta(\theta - \frac{2\pi}{3})\right)  + \frac{1}{25}\delta(\theta - \frac{4\pi}{3}) + \frac{1}{2\pi}\left( \frac{5}{4}+\cos \theta \right) $$
In \cref{fig:anzaidensity}, the spectral density function is plotted for different discretization levels. Convergence of the spectra is observed by refinement of the grid. In \cref{fig:anzaieig}, spectral projections of the observable are shown for small intervals centered around the eigenvalues $e^{i\frac{2\pi (k-1)}{3}}$, with $k=1,2,3$. Note that the exact eigenfunctions are not recovered due to the presence of continuous spectra which is interleaved in the projection. In \cref{fig:anzaicont}, projections are shown in a region where only continuous spectrum is present.

\begin{figure}[h!]
	\begin{center}
		\subfigure[Approximation of the spectral density function at different discretization levels. The spectral resolution is set to $\alpha = 2\pi / 500$. The black curves denote the approximations and the red curves are the true density.]{\includegraphics[width=.3\textwidth ]{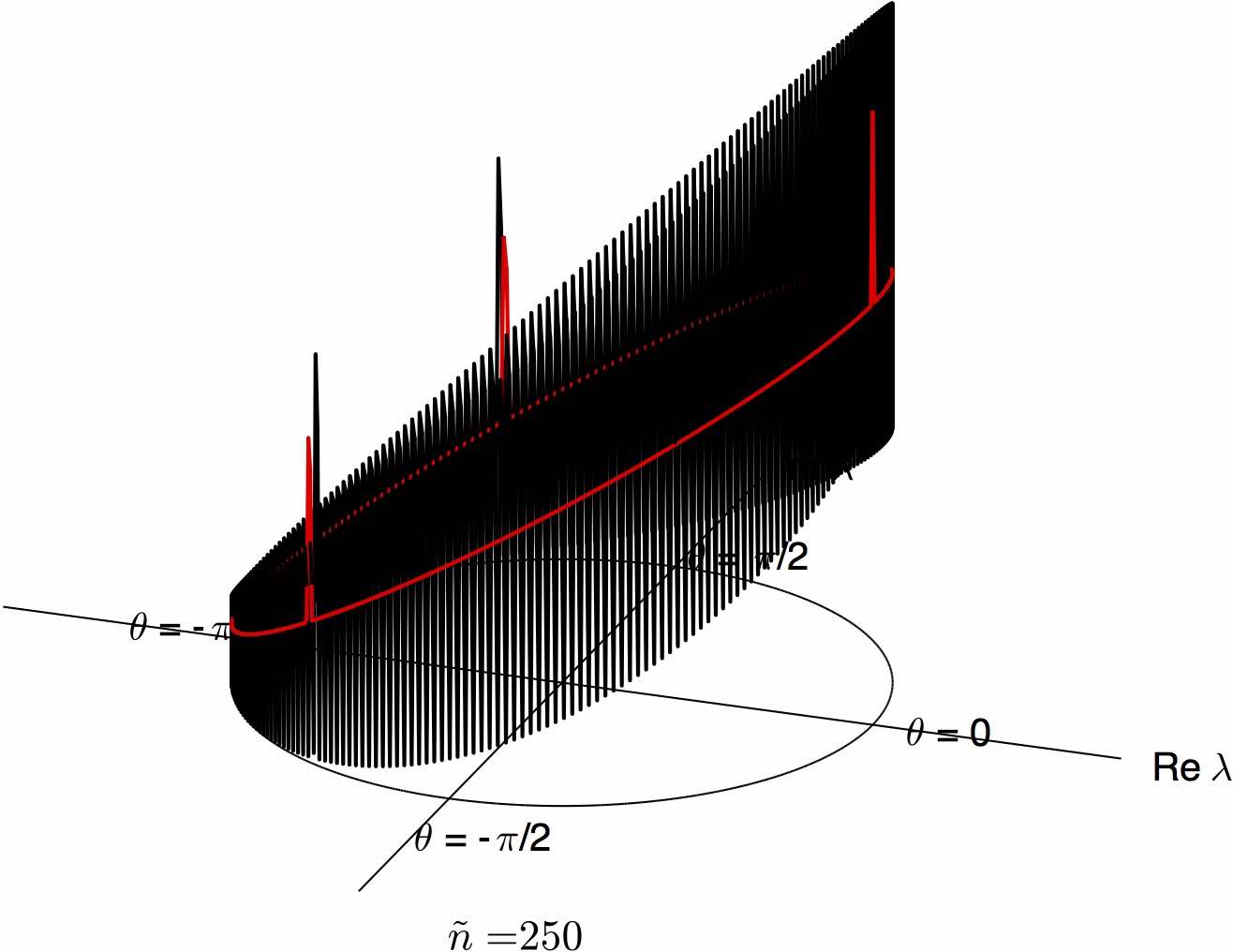} \includegraphics[width=.3\textwidth]{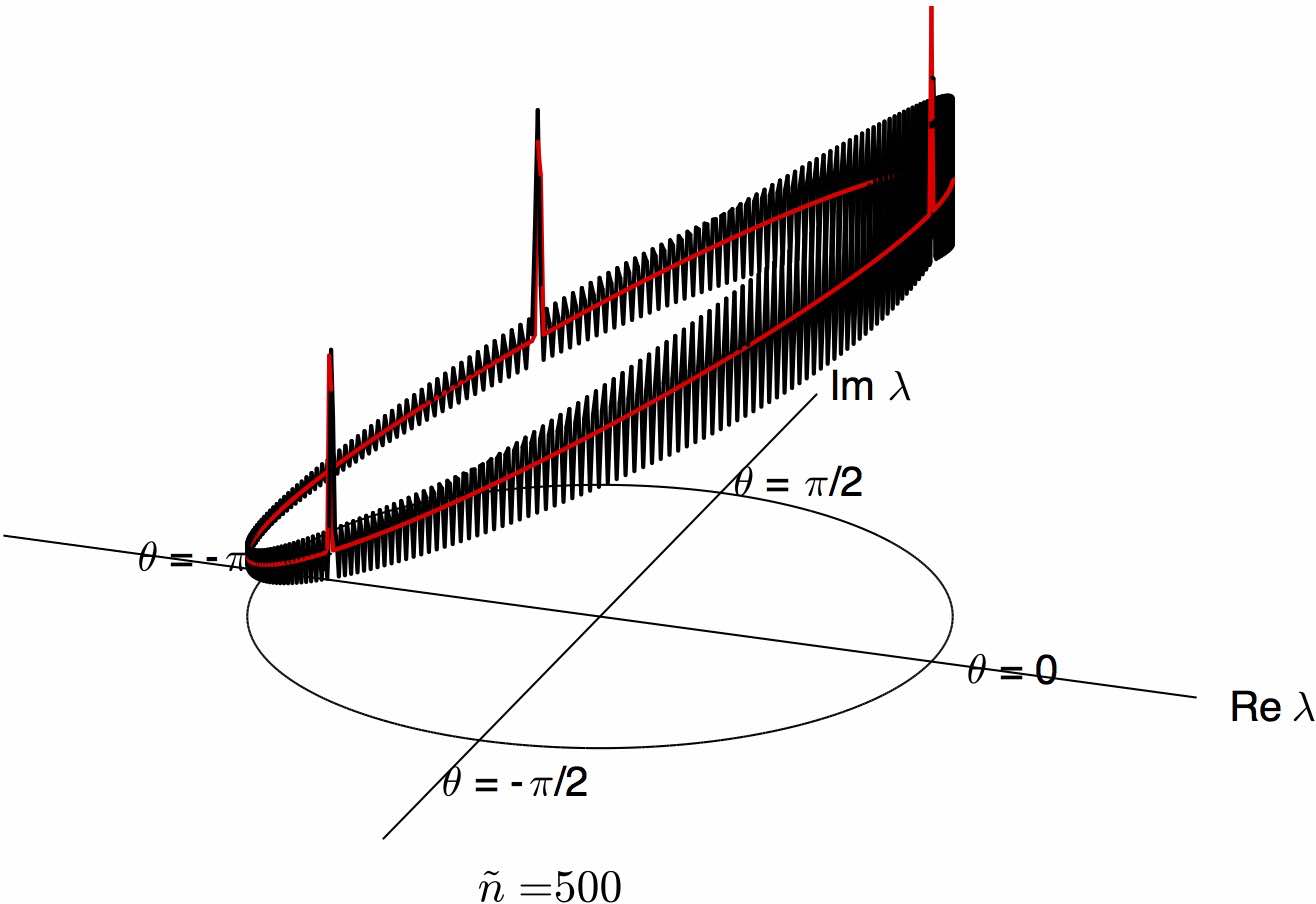}\includegraphics[width=.3\textwidth]{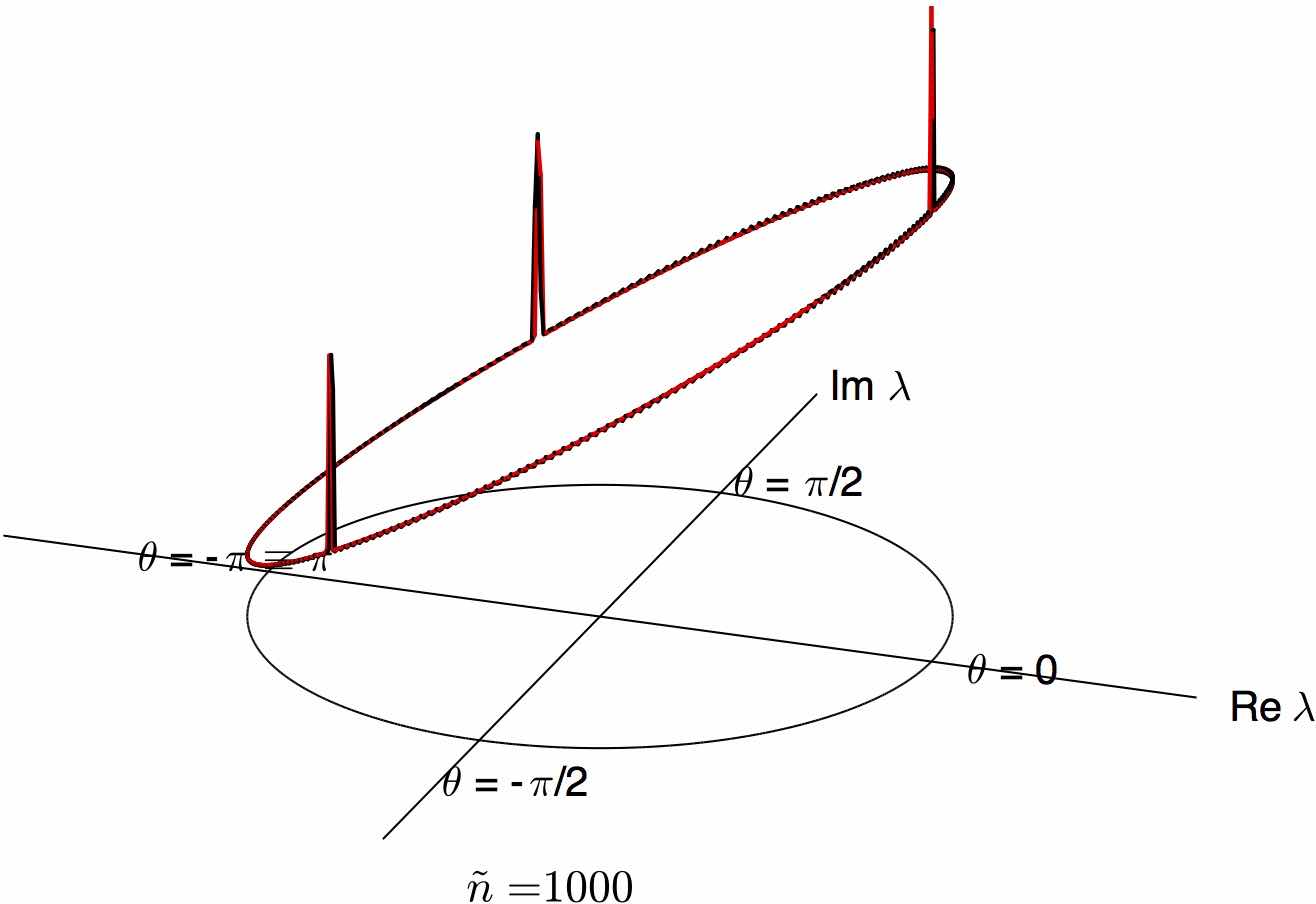} \label{fig:anzaidensity}  } \\
		\subfigure[Spectral projections computed at narrow intervals around the eigenfrequencies $\theta = 0,2\pi/3$.]{\includegraphics[width=.45\textwidth ]{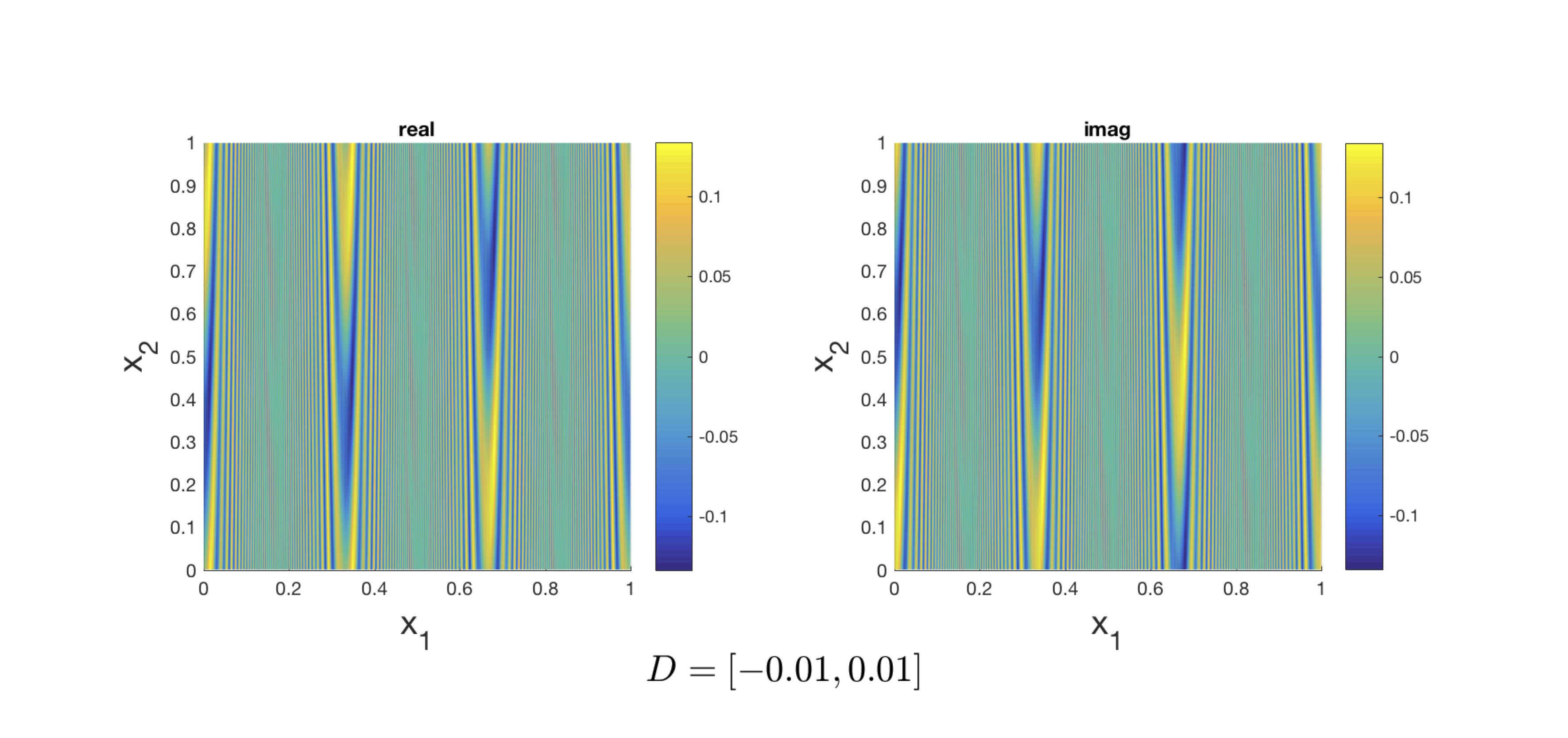}  \includegraphics[width=.45\textwidth]{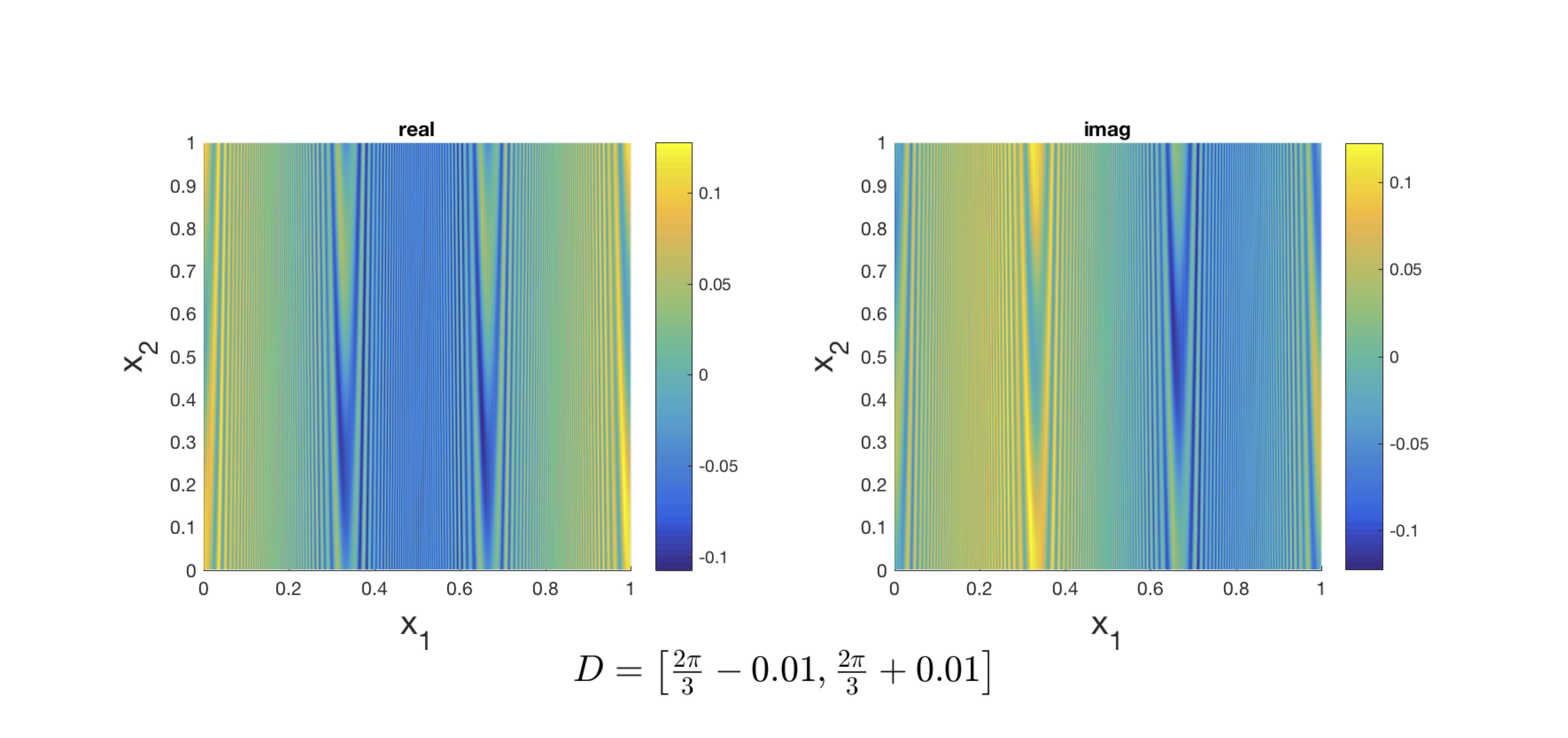} \label{fig:anzaieig} }  \\
		\subfigure[Spectral projections computed at intervals that contain only continuous spectra.]{\includegraphics[width=.45\textwidth ]{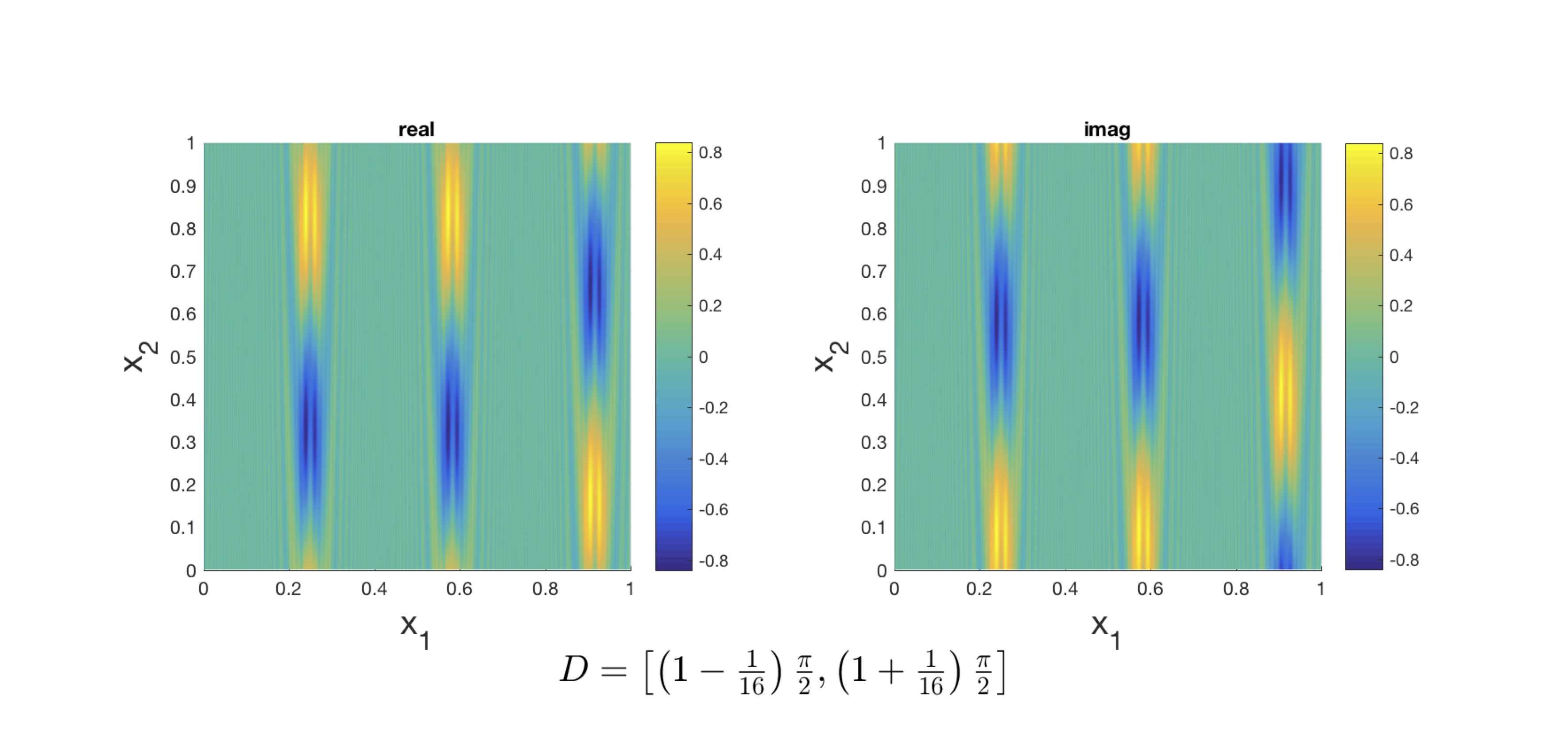} \includegraphics[width=.45\textwidth]{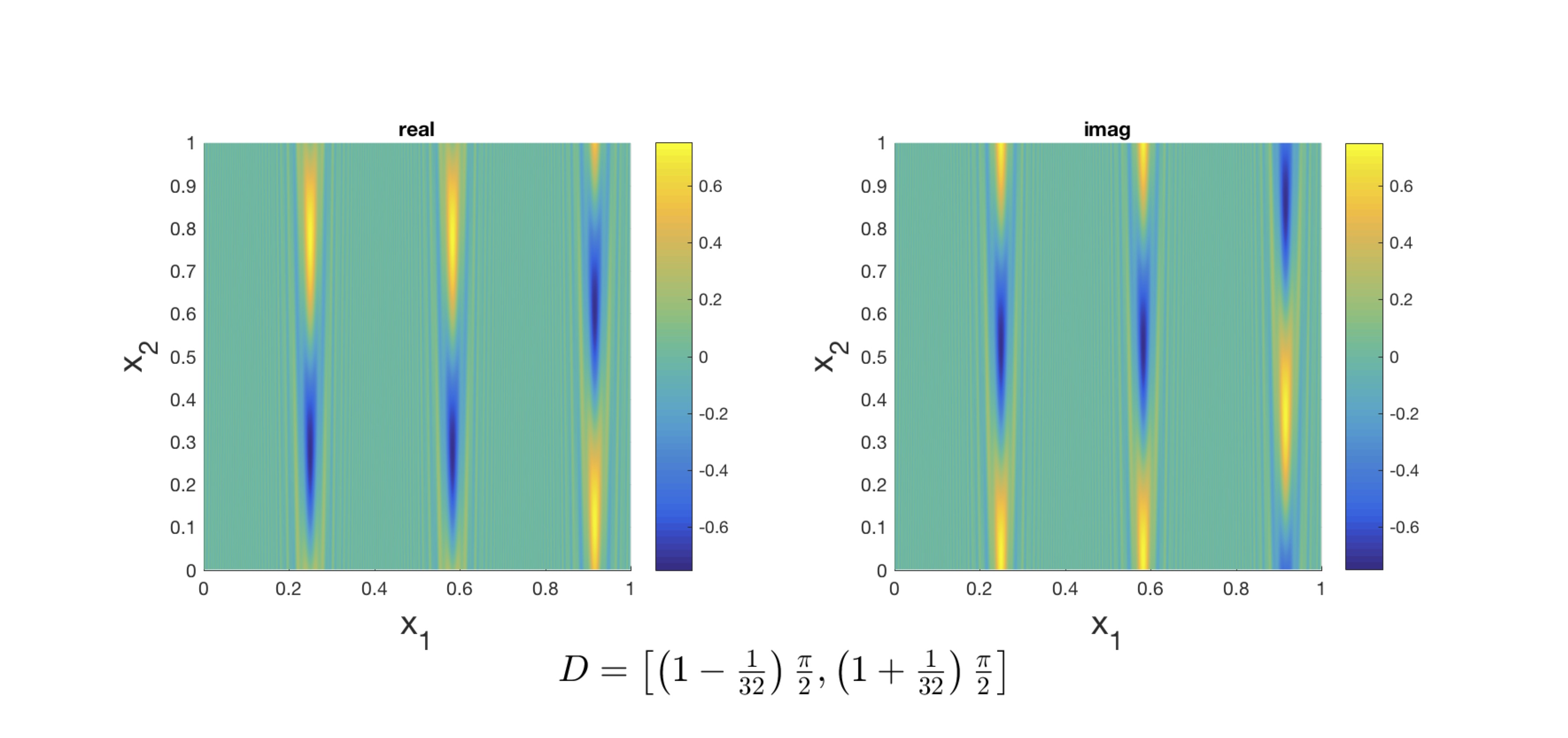} \label{fig:anzaicont} } 
	\end{center}
	\caption{Results obtained for Anzai's skew product transformation \eqref{eq:skewproduct} with $\gamma = 1/3$ and observable \eqref{eq:obsanzai}. Spectral projections were computed at a discretization level $\tilde{n}=2000$. }
\end{figure}

\section{Case study: the Chirikov standard map} \label{sec:chirikov}
In this section we apply our method to  the family of area-preserving maps introduced by \emph{Chirikov} \eqref{eq:chirikovfamily}:
$$ 
T(x) = \begin{bmatrix} x_1 + x_2 + K \sin(2\pi x_1)  \\  x_2 + K \sin(2\pi x_1) \end{bmatrix}\mod{1}
$$
Unlike the examples of the previous section, finding an explicit expression of the spectra is highly non-trivial (except for the case when $K=0$). In \cref{fig:chirikovdensity,fig:1period,fig:2period,fig:3period,fig:4period} the spectral properties of the standard map are examined for $K$-values ranging between $0$ and $0.35$.  In \cref{fig:chirikovdensity},  approximations of the spectral density functions \eqref{eq:densityapprx} are plotted for the observable:
\begin{equation}
g(x) = e^{i 4\pi x_1} + e^{i 3\pi x_1} + 0.01 e^{i 2\pi x_2}   \label{eq:chirikovobs}
\end{equation}
It can be seen that sharp peaks form at locations other than eigen-frequency $\theta = 0$. These peaks illustrate that the purely continuous spectra disintegrate, with the rise of discrete spectra for the operator. Eigenfunctions of \eqref{eq:KOOPMAN} may be recovered from the spectral projections by means of centering the projection on narrow intervals around the respective eigenfrequency. In \cref{fig:1period,fig:2period,fig:3period,fig:4period}, this is done for respectively $\theta=0,\pi, 2\pi/3, \pi/2$. The eigenfunction at $\theta = 0$ yields an invariant partition of the state-space, the eigenfunctions of the other frequencies $\theta=\pi, 2\pi/3, \pi/2$ provide periodic partitions of period 2,3 and 4, respectively (see also \cite{levnajic2010ergodic,levnajic2015ergodic}). 

\begin{figure}[h!]
	\begin{center}
		\begin{tabular}{ccc}
			\includegraphics[width=.25\textwidth]{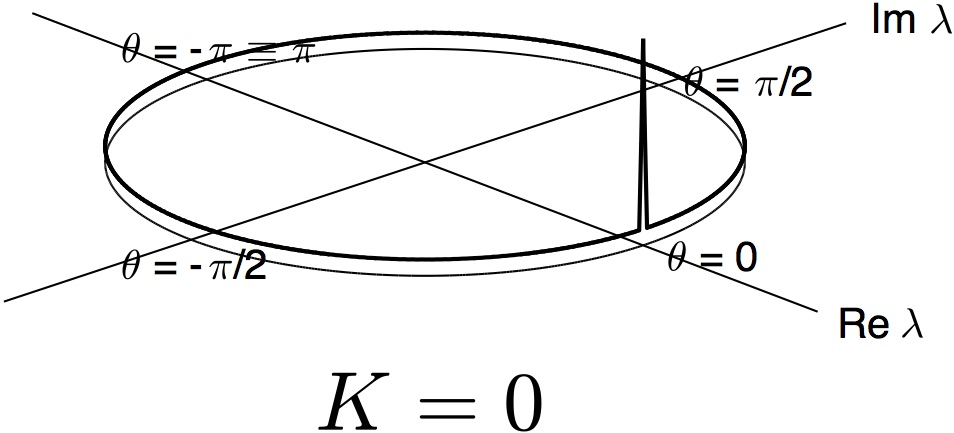} & \includegraphics[width=.25\textwidth]{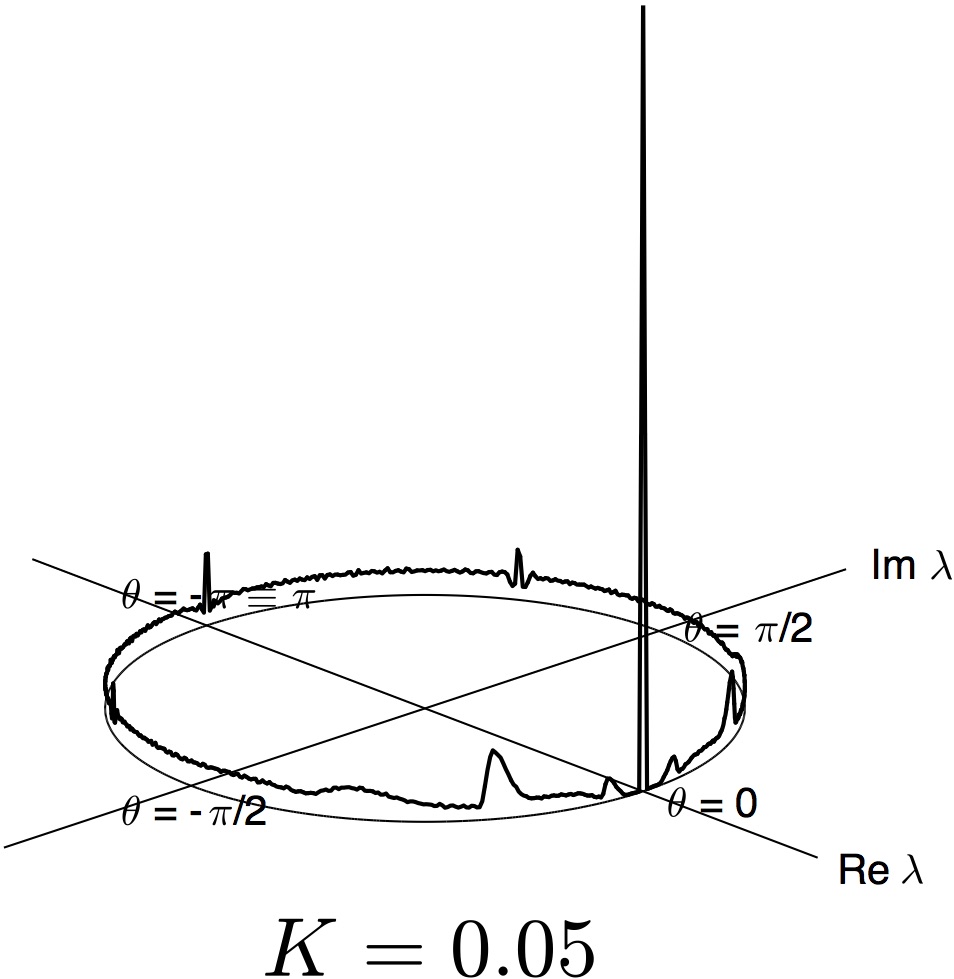} & \includegraphics[width=.25\textwidth]{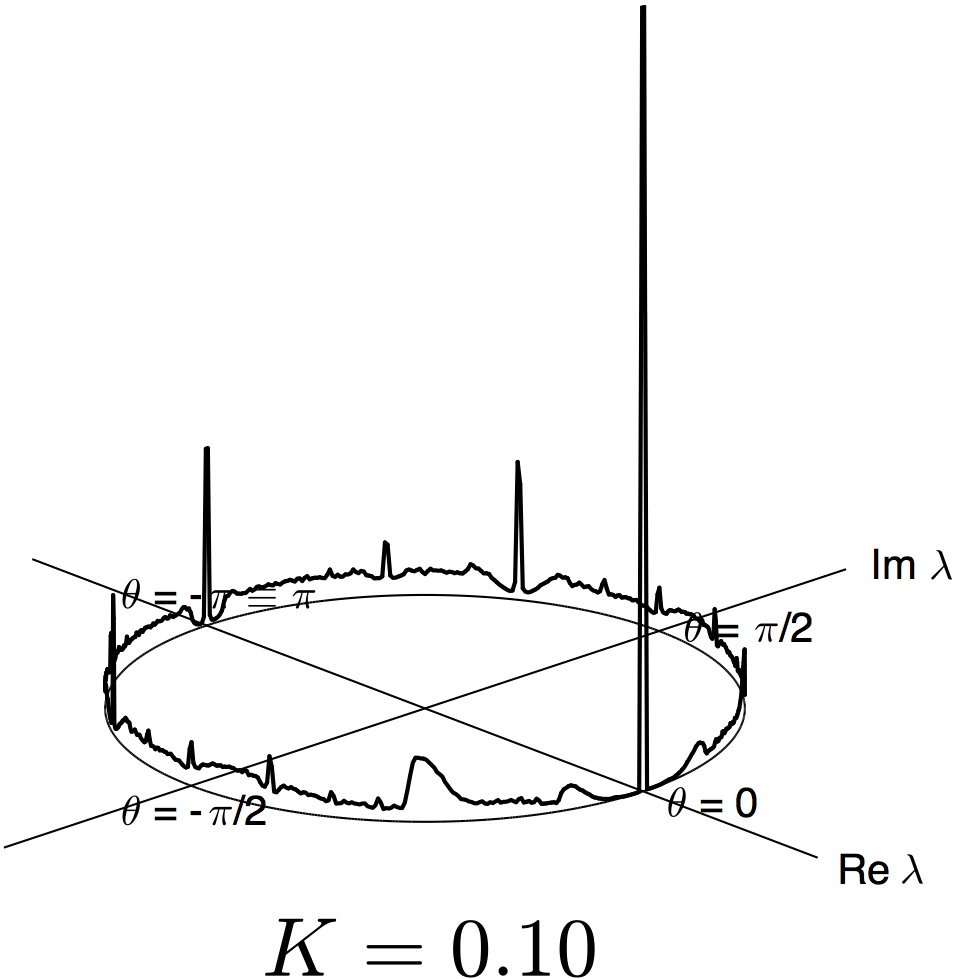} \\
			\includegraphics[width=.25\textwidth]{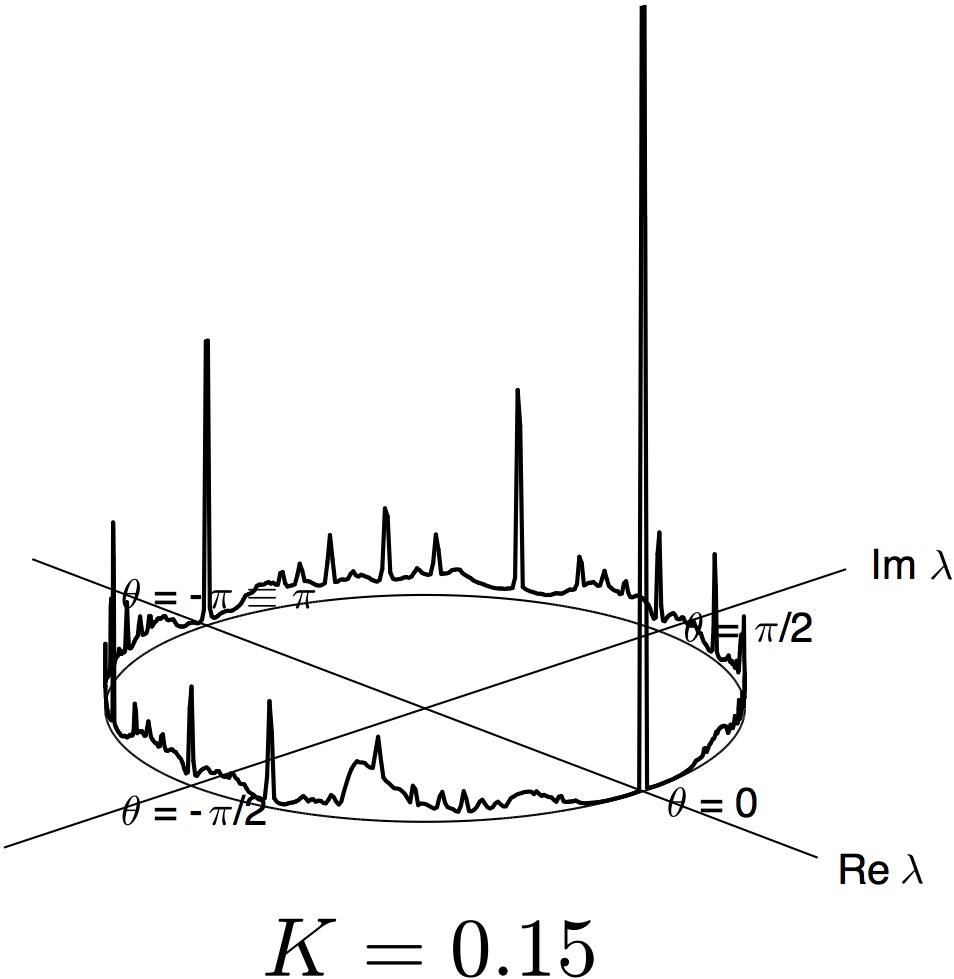} & \includegraphics[width=.25\textwidth]{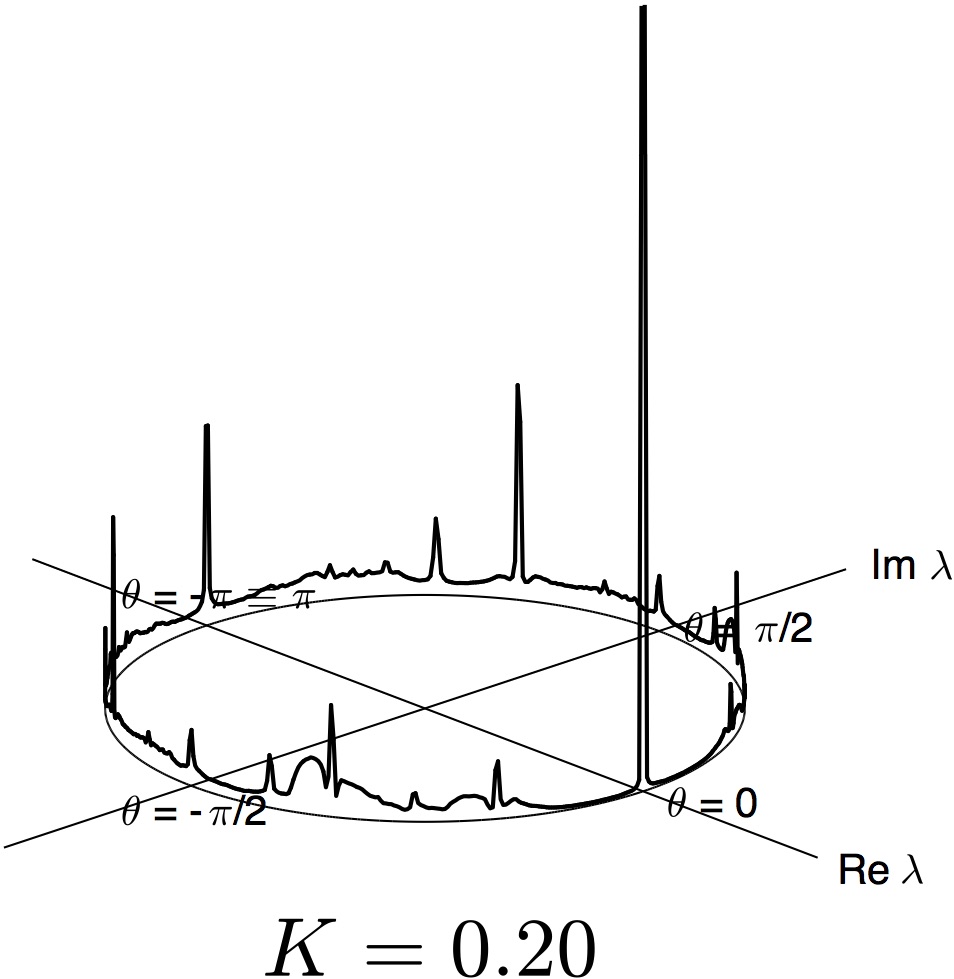} & \includegraphics[width=.25\textwidth]{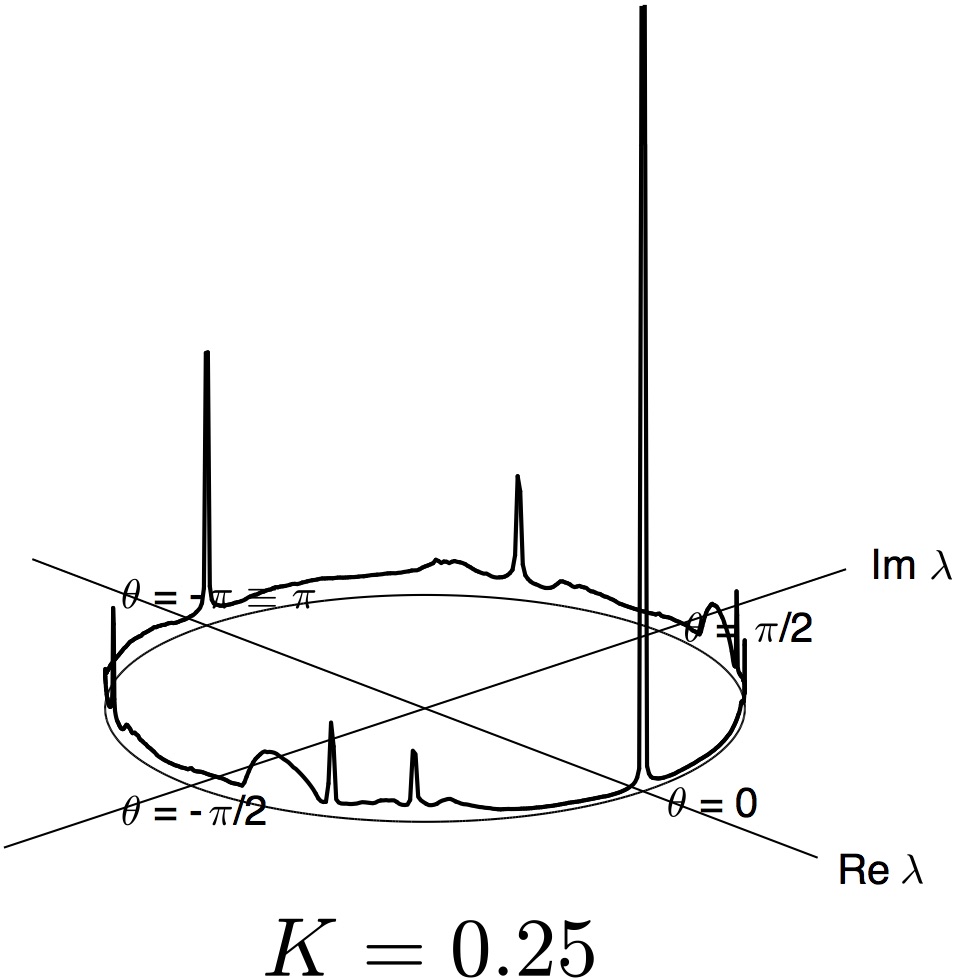} 
		\end{tabular}
	\end{center}
	\caption{Evolution of the spectral density function of the Chirikov map \eqref{eq:chirikovfamily} for the observable \eqref{eq:chirikovobs}.  Results for $\tilde{n}= 2000$ and  $\alpha = 2\pi / 500$.} \label{fig:chirikovdensity}
\end{figure}

\begin{figure}[h!]
	\begin{center}
		\begin{tabular}{cc}\includegraphics[width=.45\textwidth]{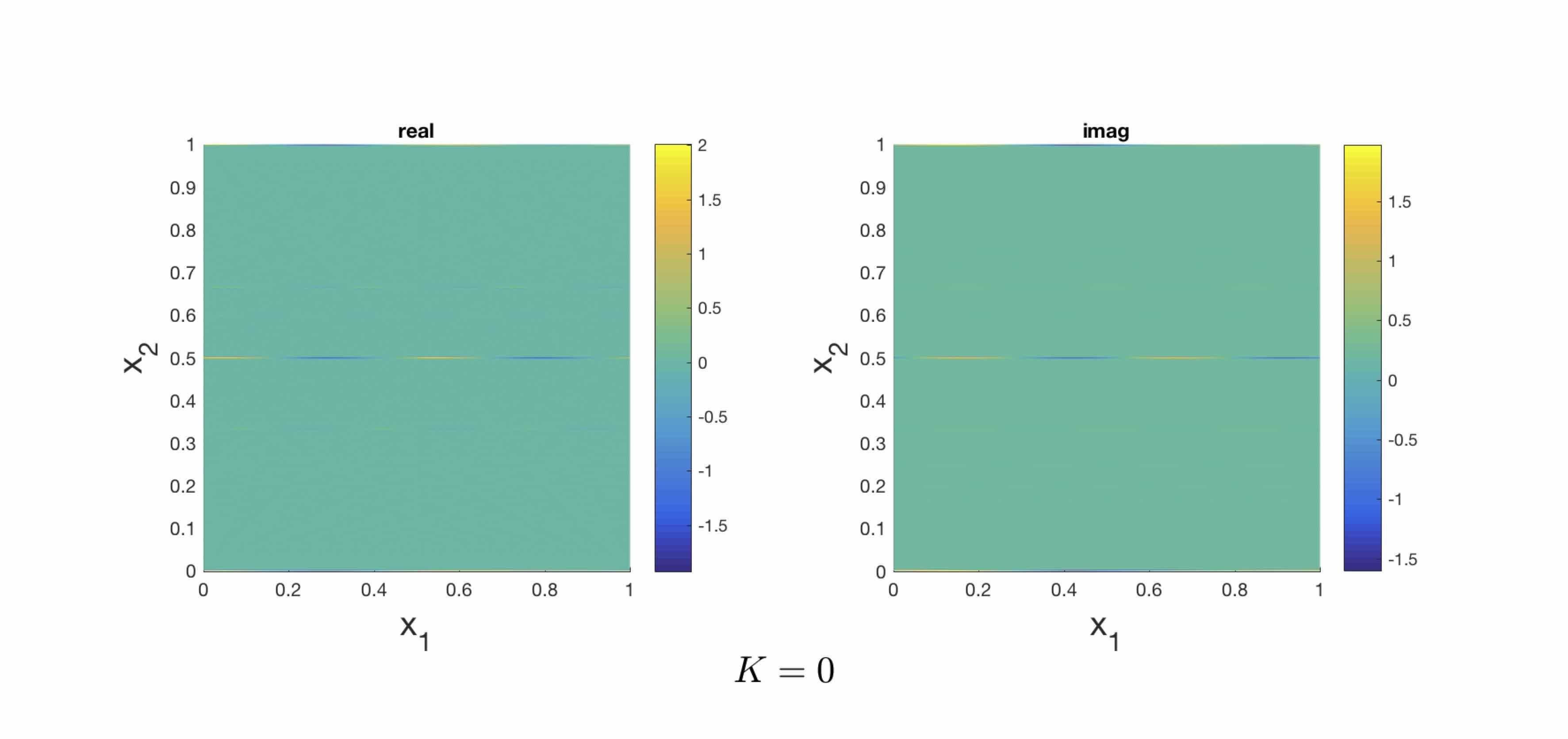} & \includegraphics[width=.45\textwidth]{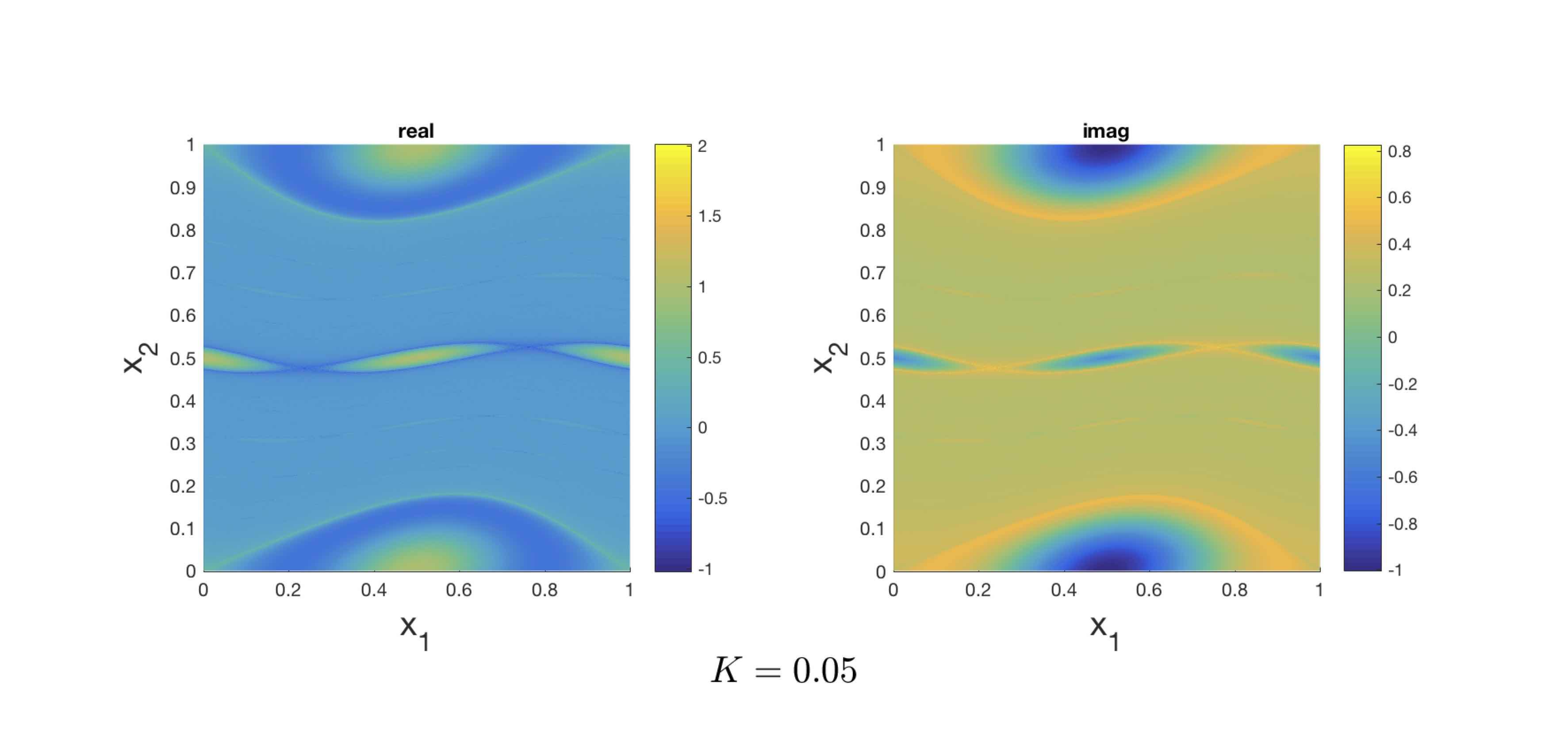} \\
			\includegraphics[width=.45\textwidth]{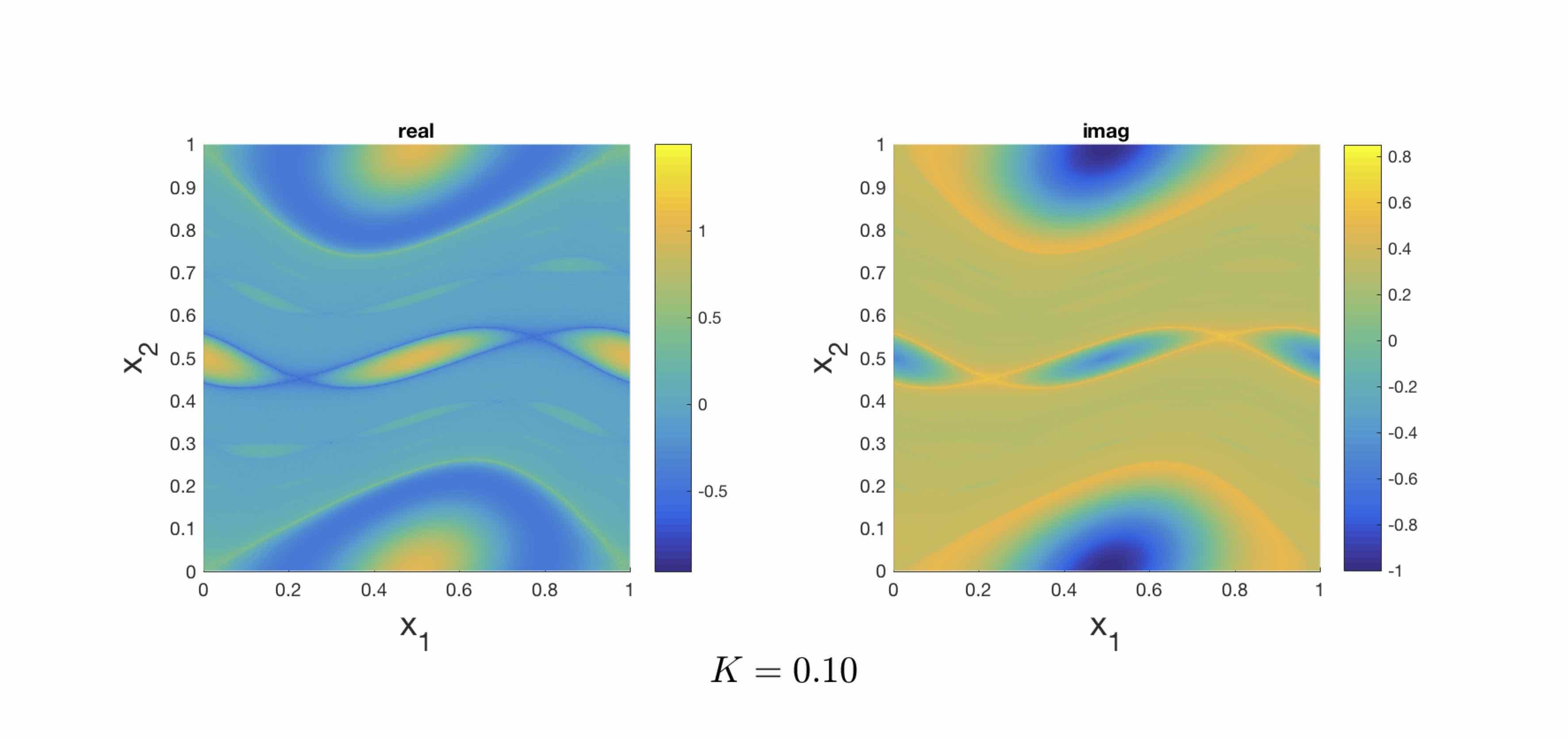} & \includegraphics[width=.45\textwidth]{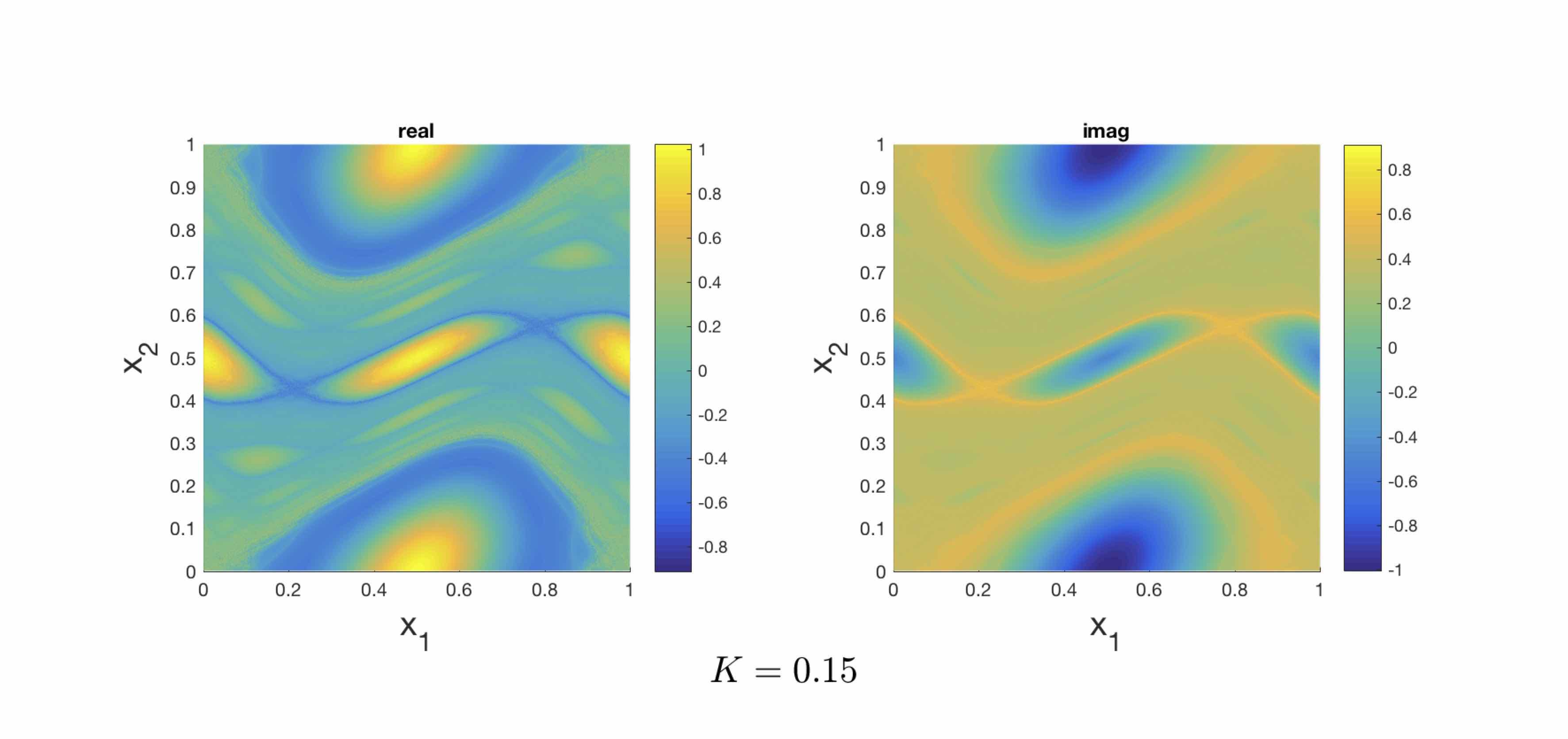}  \\
			\includegraphics[width=.45\textwidth]{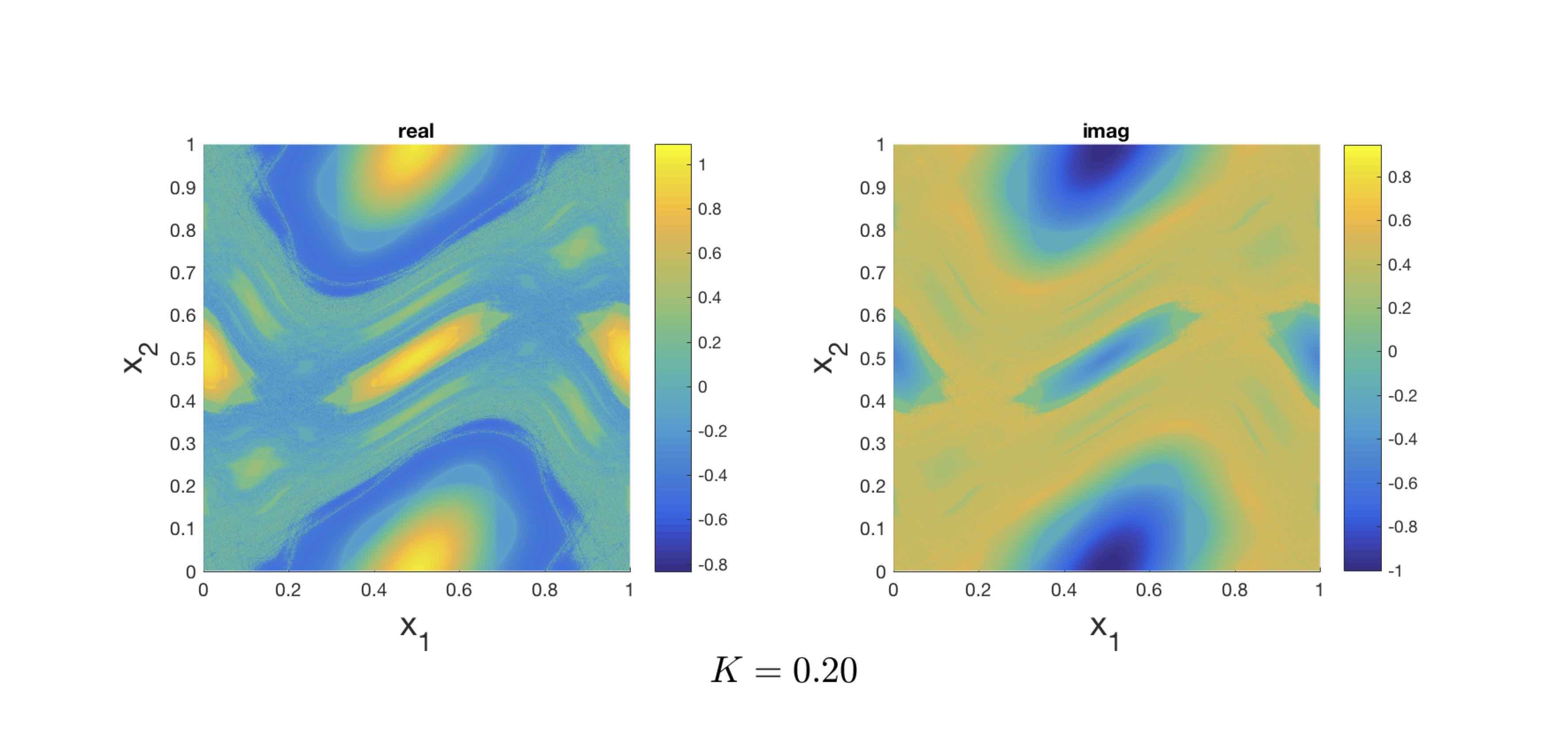}  & \includegraphics[width=.45\textwidth]{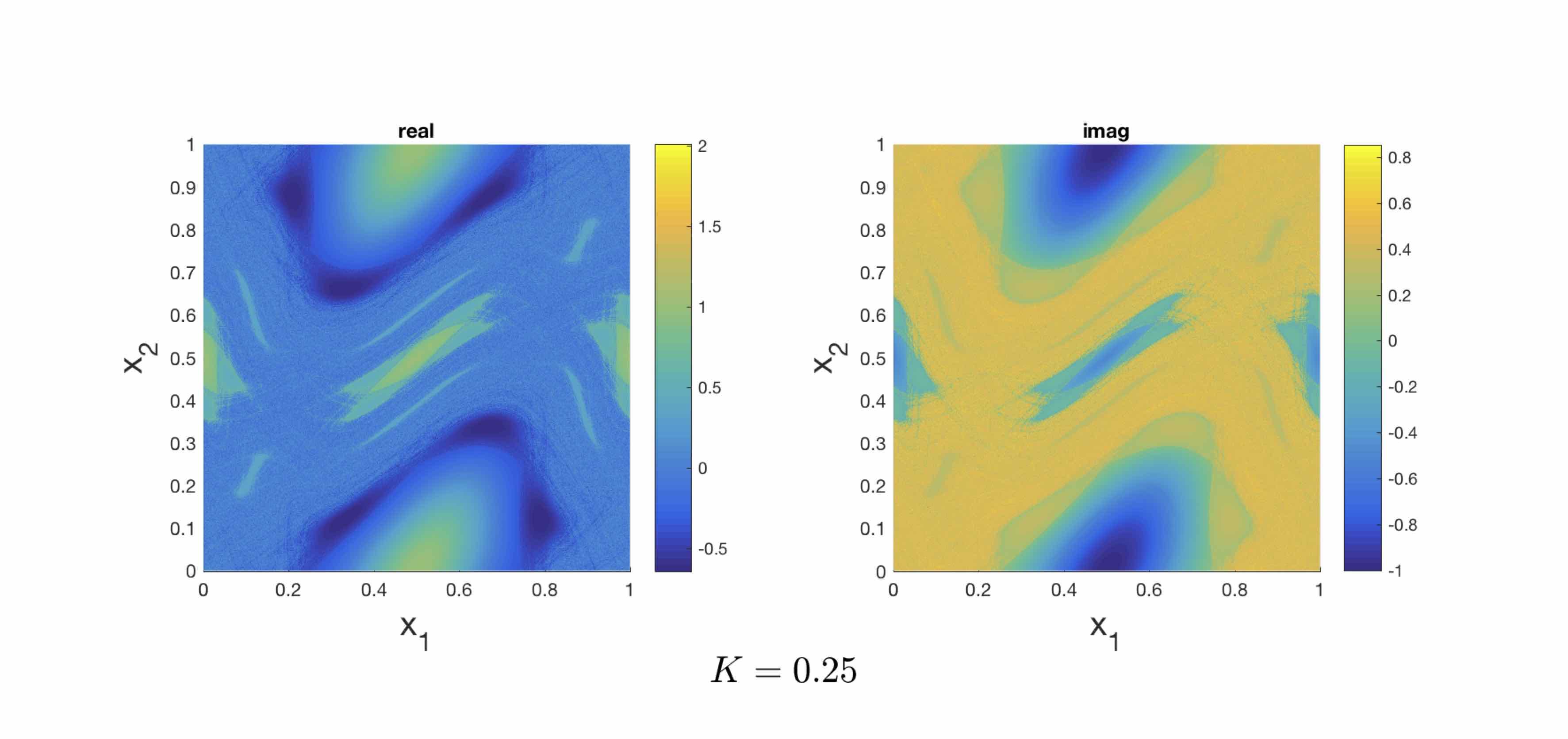} 	  \end{tabular}    
	\end{center}
	\caption{Spectral projections computed for the Chirikov map \eqref{eq:chirikovfamily} at the interval $D=[-0.02, 0.02]$ with $\tilde{n}=2000$.  The depicted projection approximates the eigenfuncions at $\theta=0$, which generates an invariant partition of the state-space.} \label{fig:1period}
	\begin{center}
		\begin{tabular}{cc}\includegraphics[width=.45\textwidth]{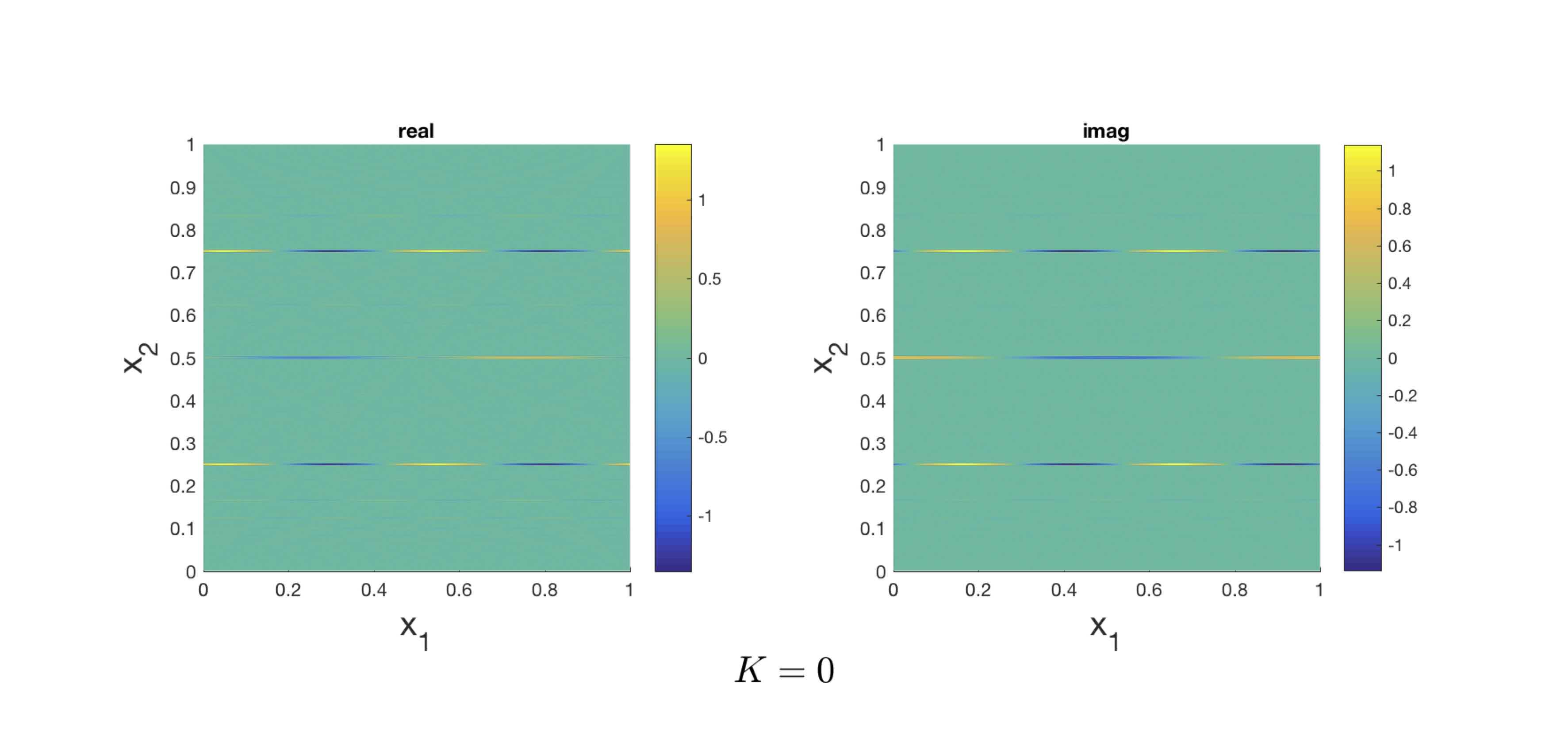} & \includegraphics[width=.45\textwidth]{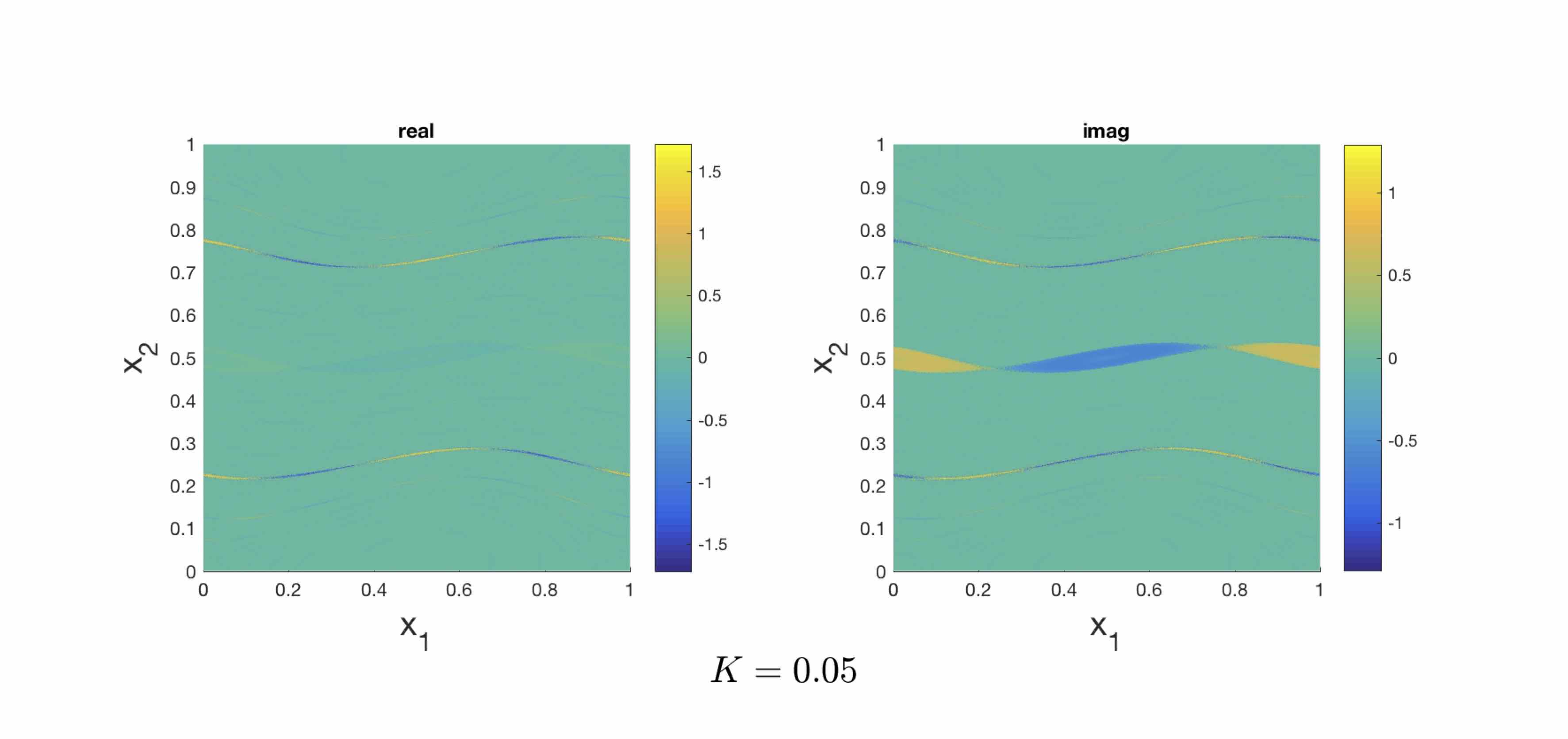} \\
			\includegraphics[width=.45\textwidth]{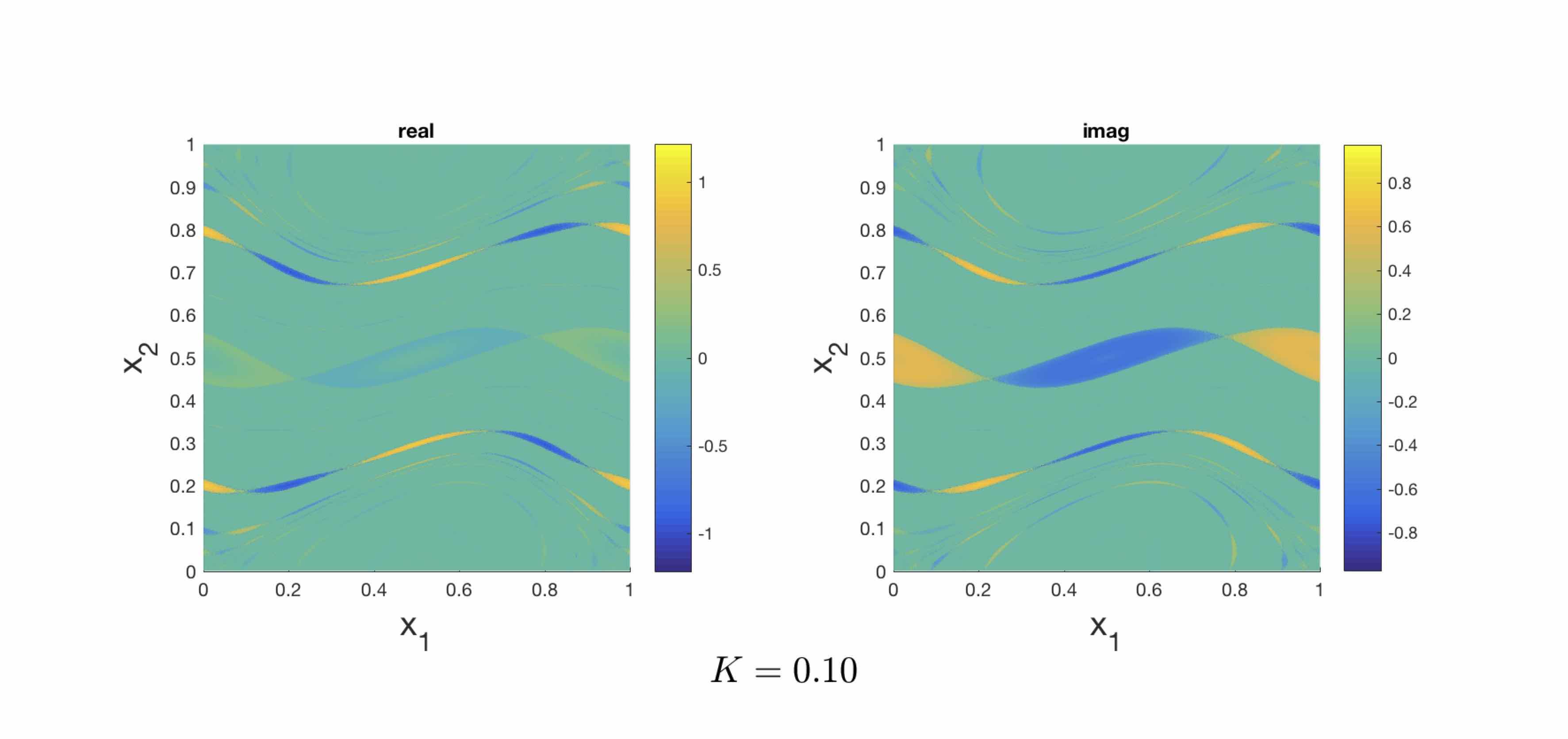} & \includegraphics[width=.45\textwidth]{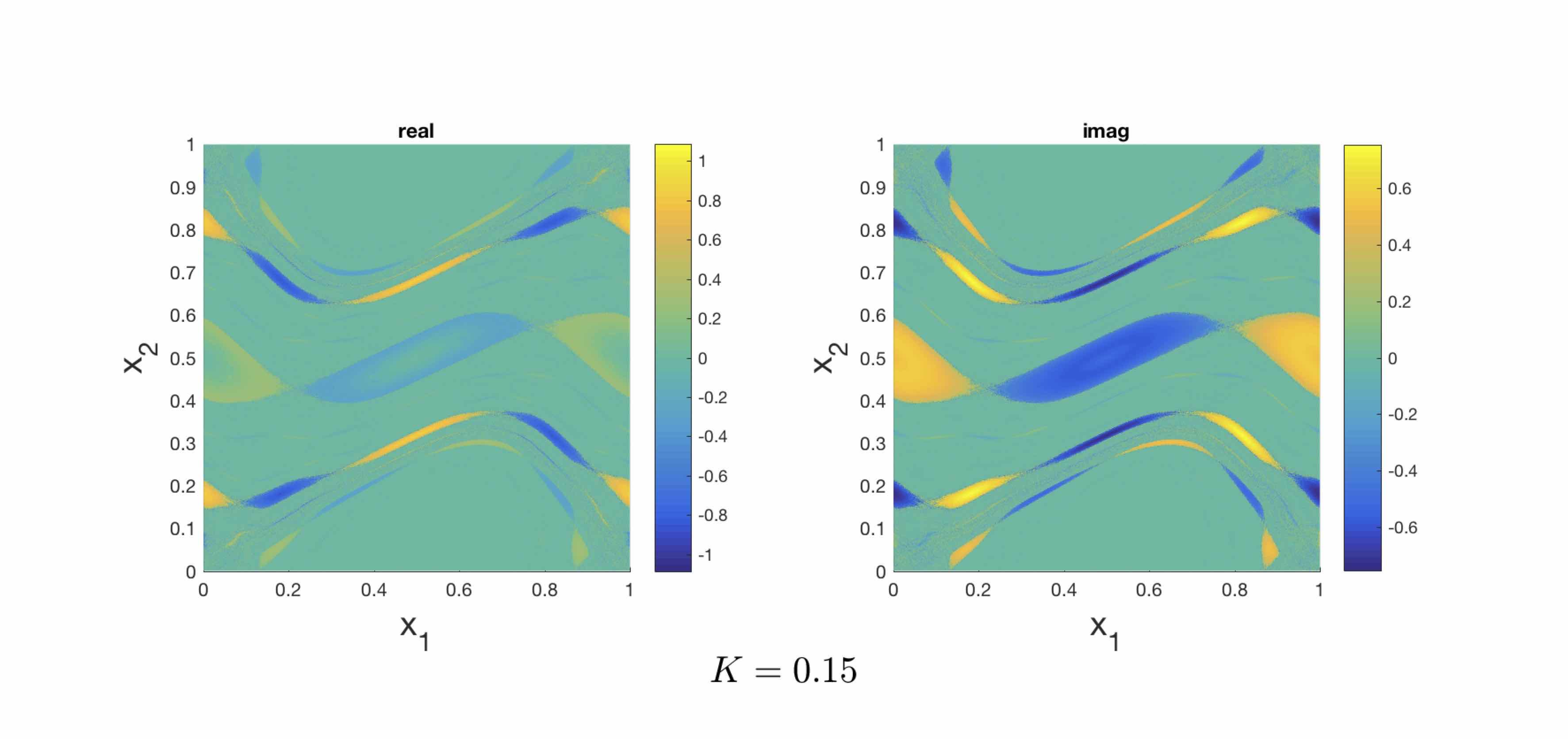}  \\
			\includegraphics[width=.45\textwidth]{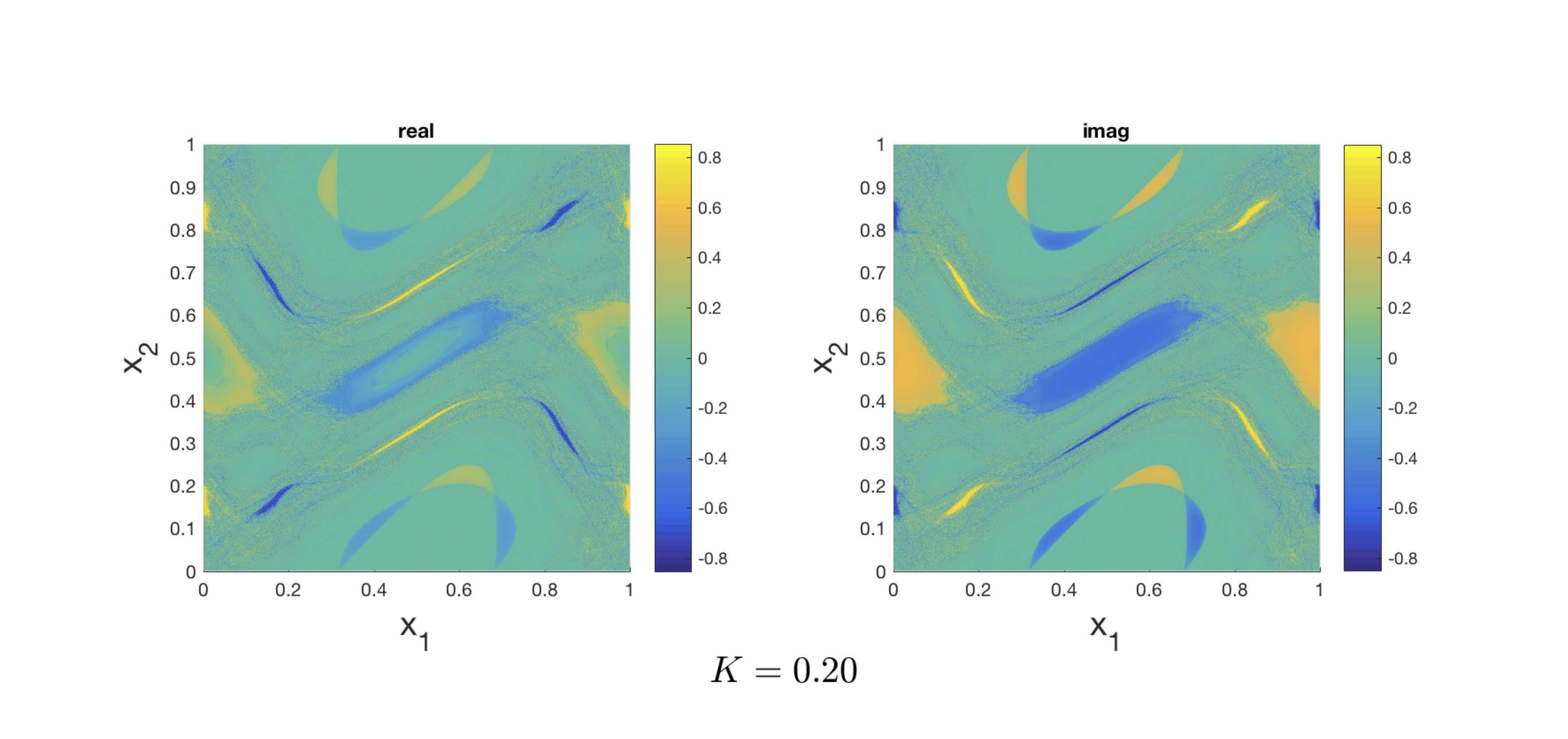}  & \includegraphics[width=.45\textwidth]{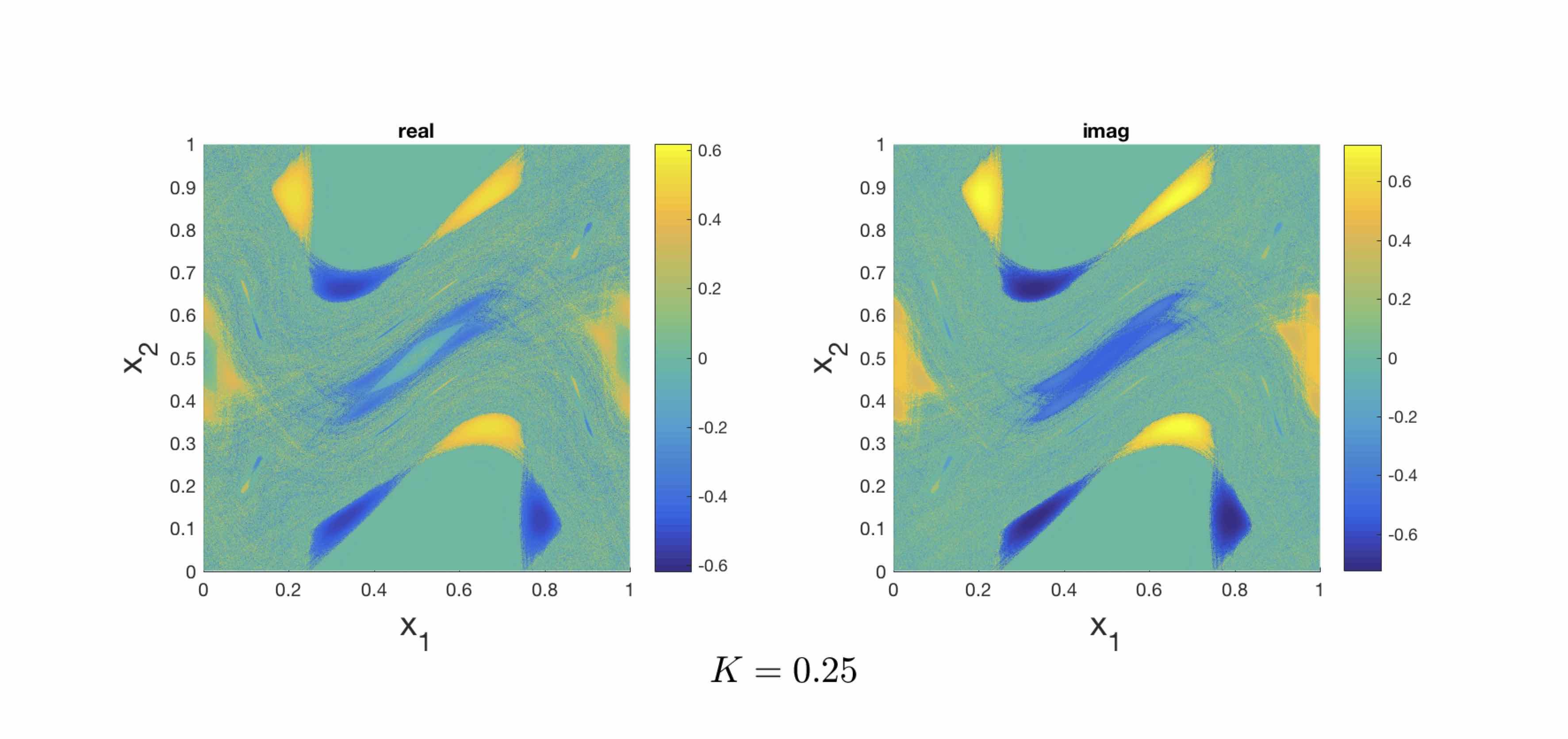} 	  \end{tabular}    
	\end{center}
	\caption{Spectral projections computed for the Chirikov map \eqref{eq:chirikovfamily} at the interval $D=[\pi-0.02, \pi+0.02]$ with $\tilde{n}=2000$.  The depicted projection approximates the eigenfuncions at $\theta=\pi$, which generates an period-2 partition of the state-space.} \label{fig:2period}
\end{figure}

\begin{figure}[h!]
	\begin{center}
		\begin{tabular}{cc}\includegraphics[width=.45\textwidth]{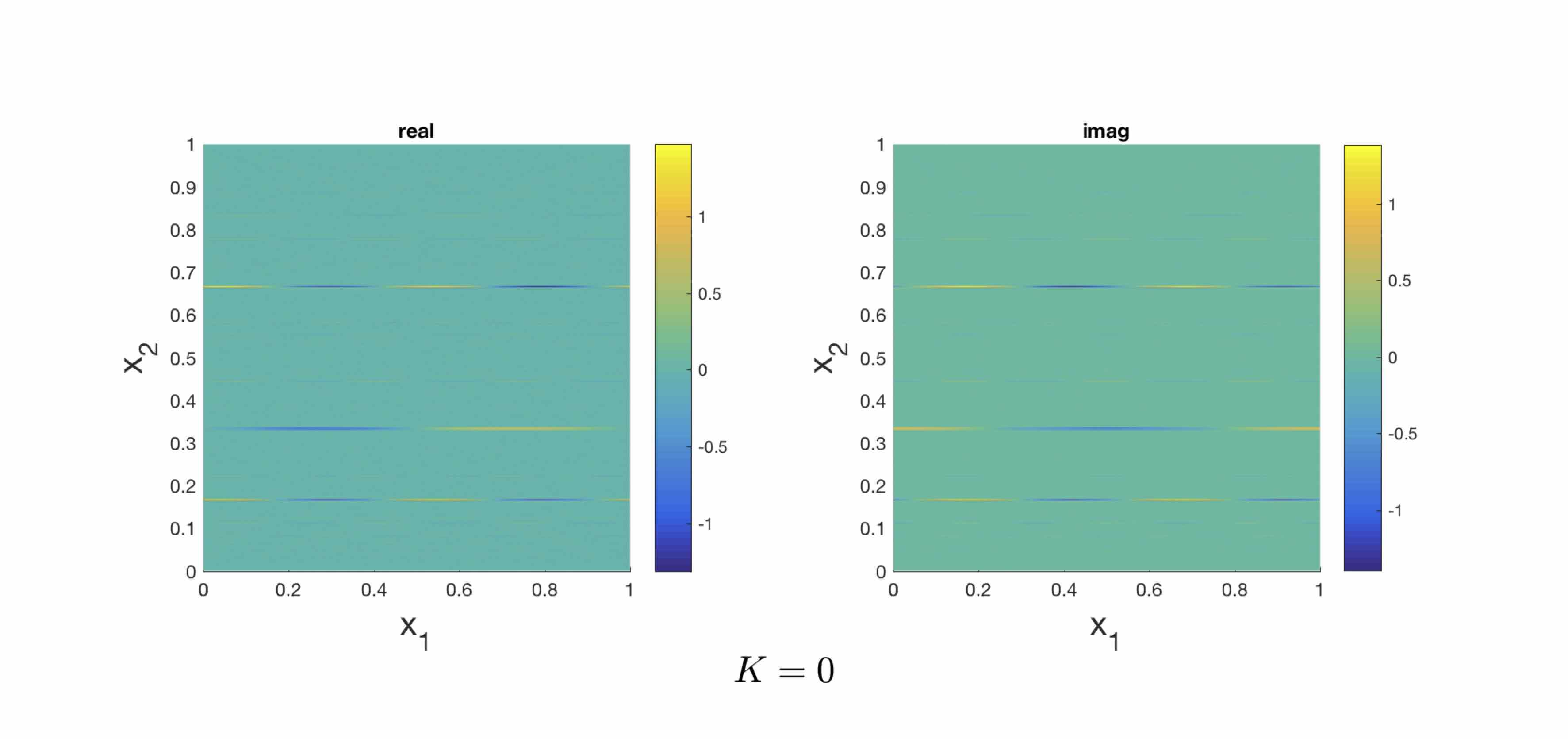} & \includegraphics[width=.45\textwidth]{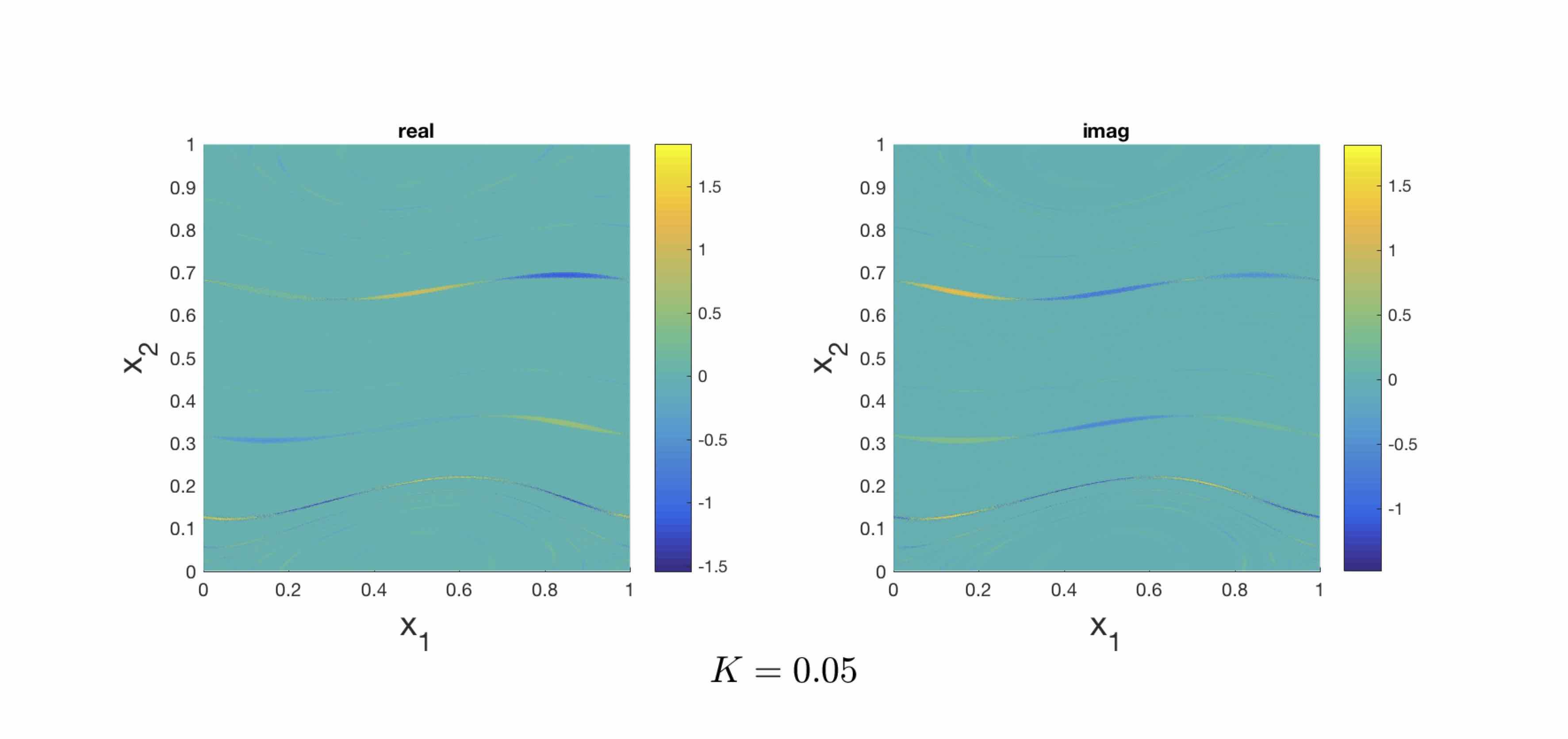} \\
			\includegraphics[width=.45\textwidth]{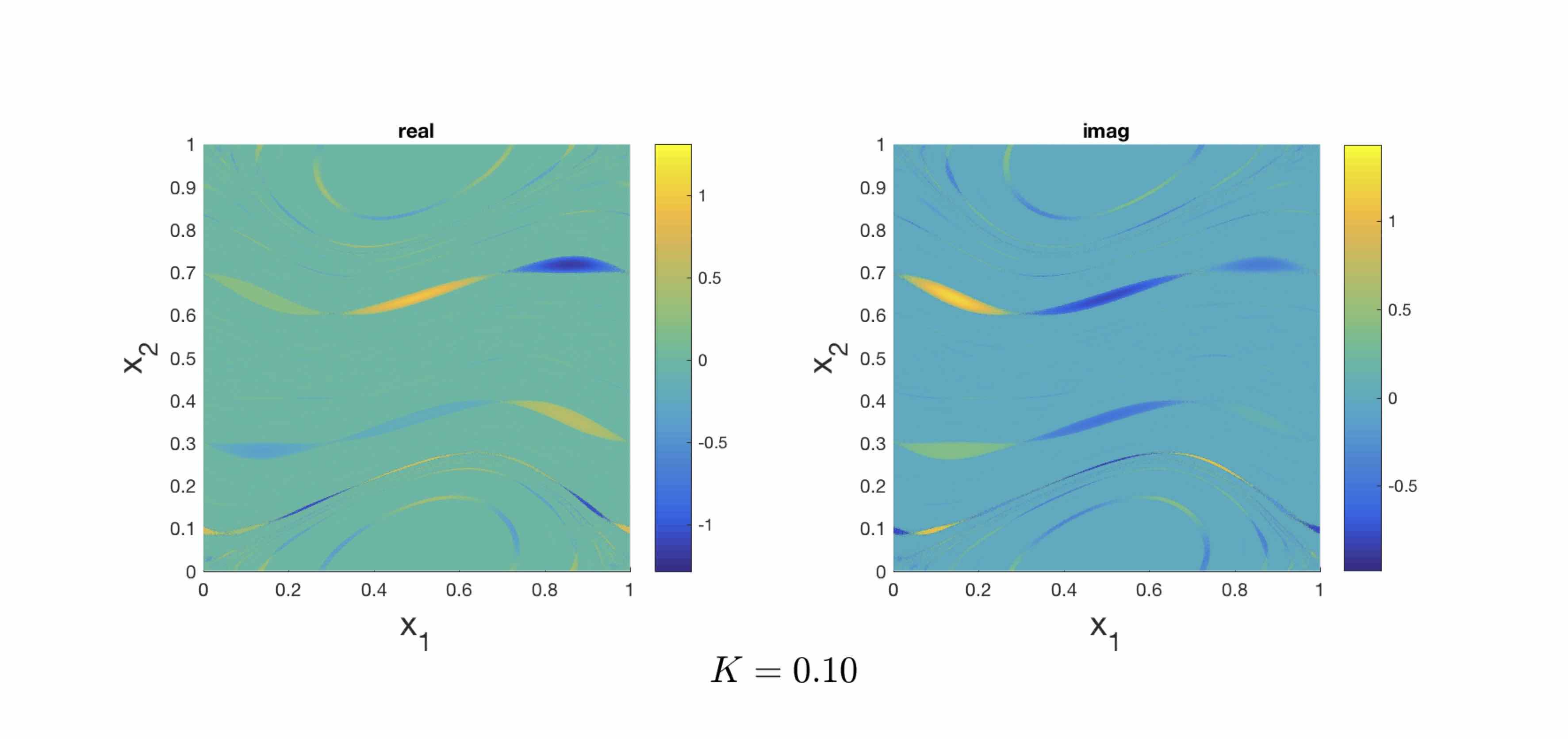} & \includegraphics[width=.45\textwidth]{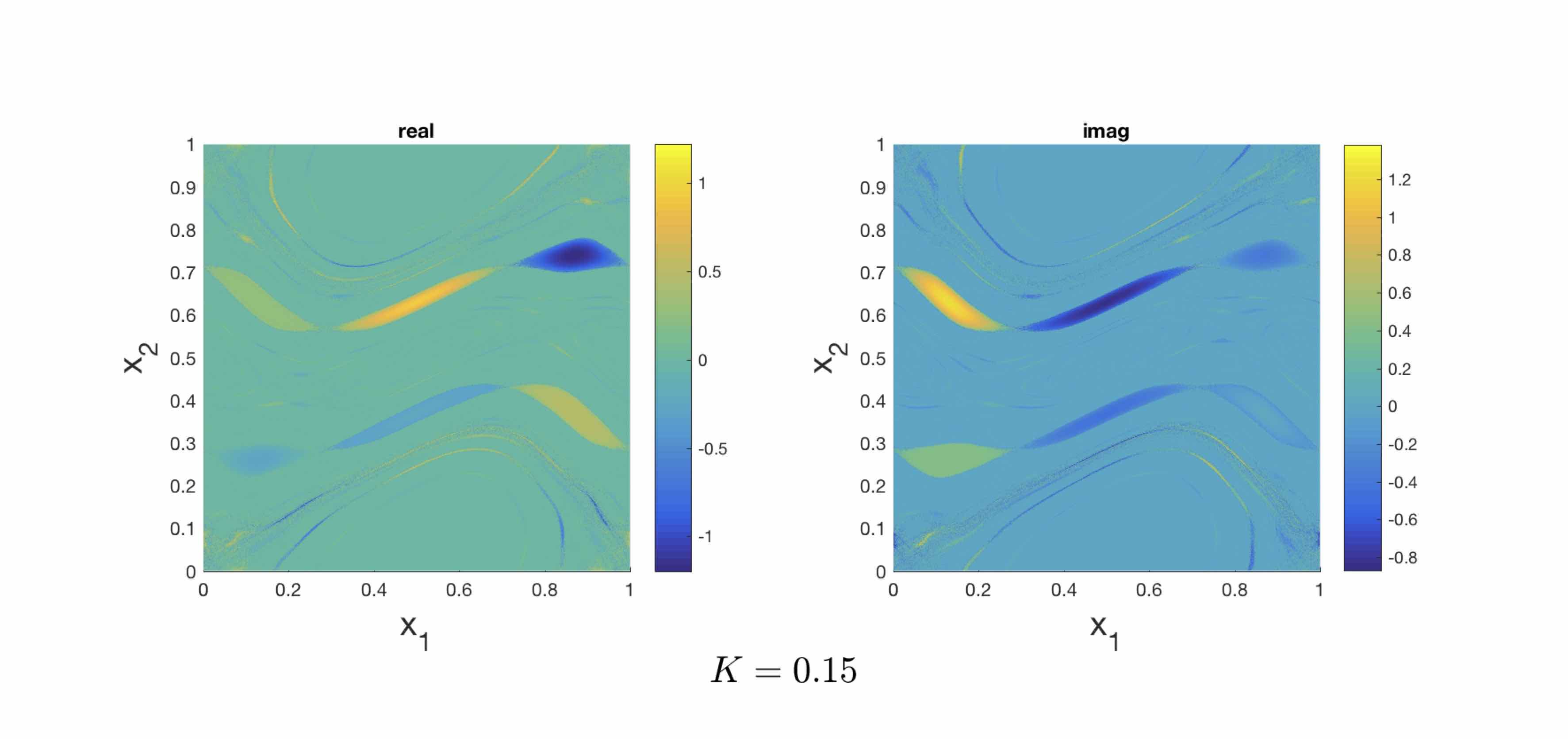}  \\
			\includegraphics[width=.45\textwidth]{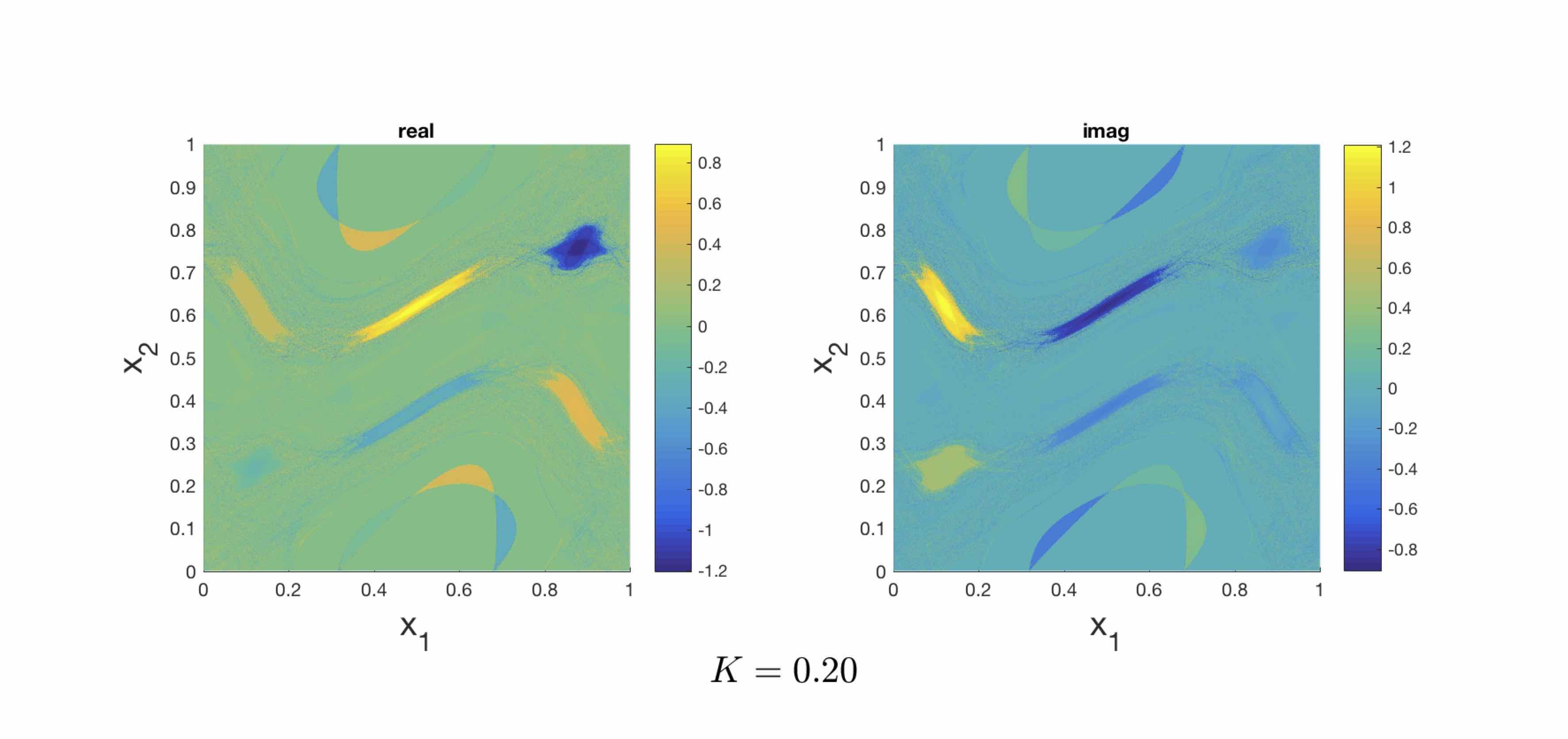}  & \includegraphics[width=.45\textwidth]{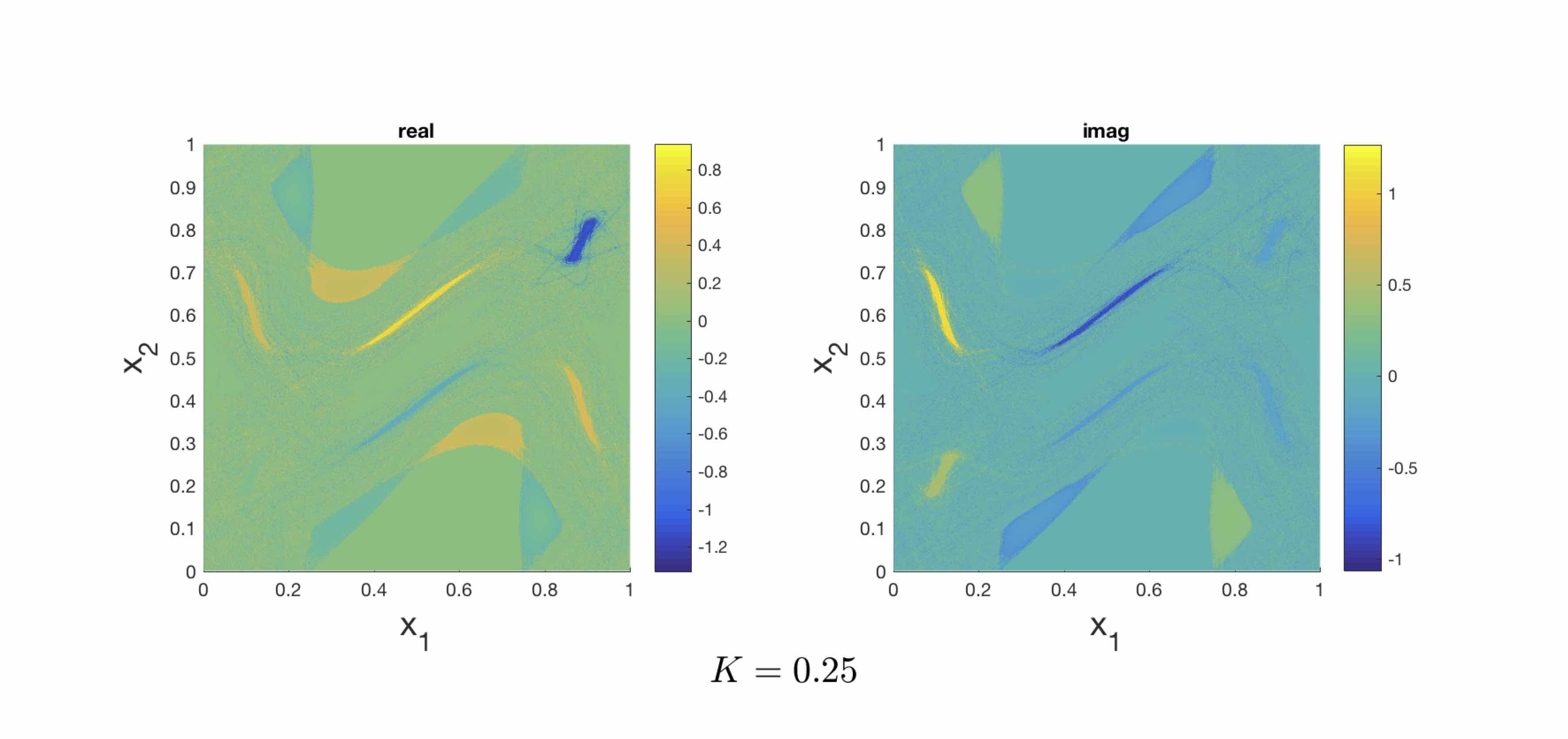} 	  \end{tabular}    
	\end{center}
	\caption{Spectral projections computed for the Chirikov map \eqref{eq:chirikovfamily} at the interval $D=[2\pi/3-0.02, 2\pi/3+0.02]$ with $\tilde{n}=2000$.  The depicted projection approximates the eigenfuncions at $\theta=2\pi/3$, which generates an period-3 partition of the state-space.} \label{fig:3period}
	\begin{center}
		\begin{tabular}{cc}\includegraphics[width=.45\textwidth]{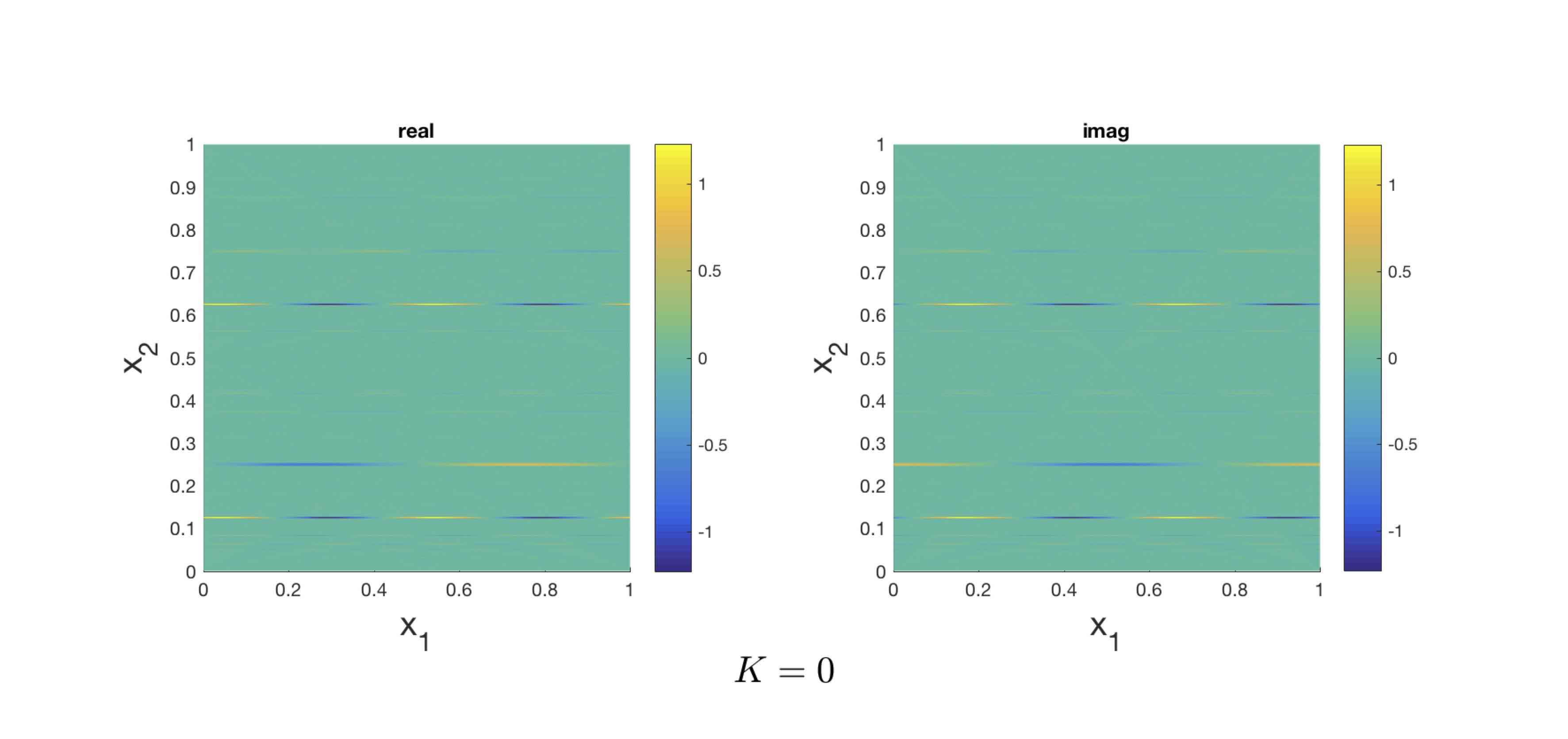} & \includegraphics[width=.45\textwidth]{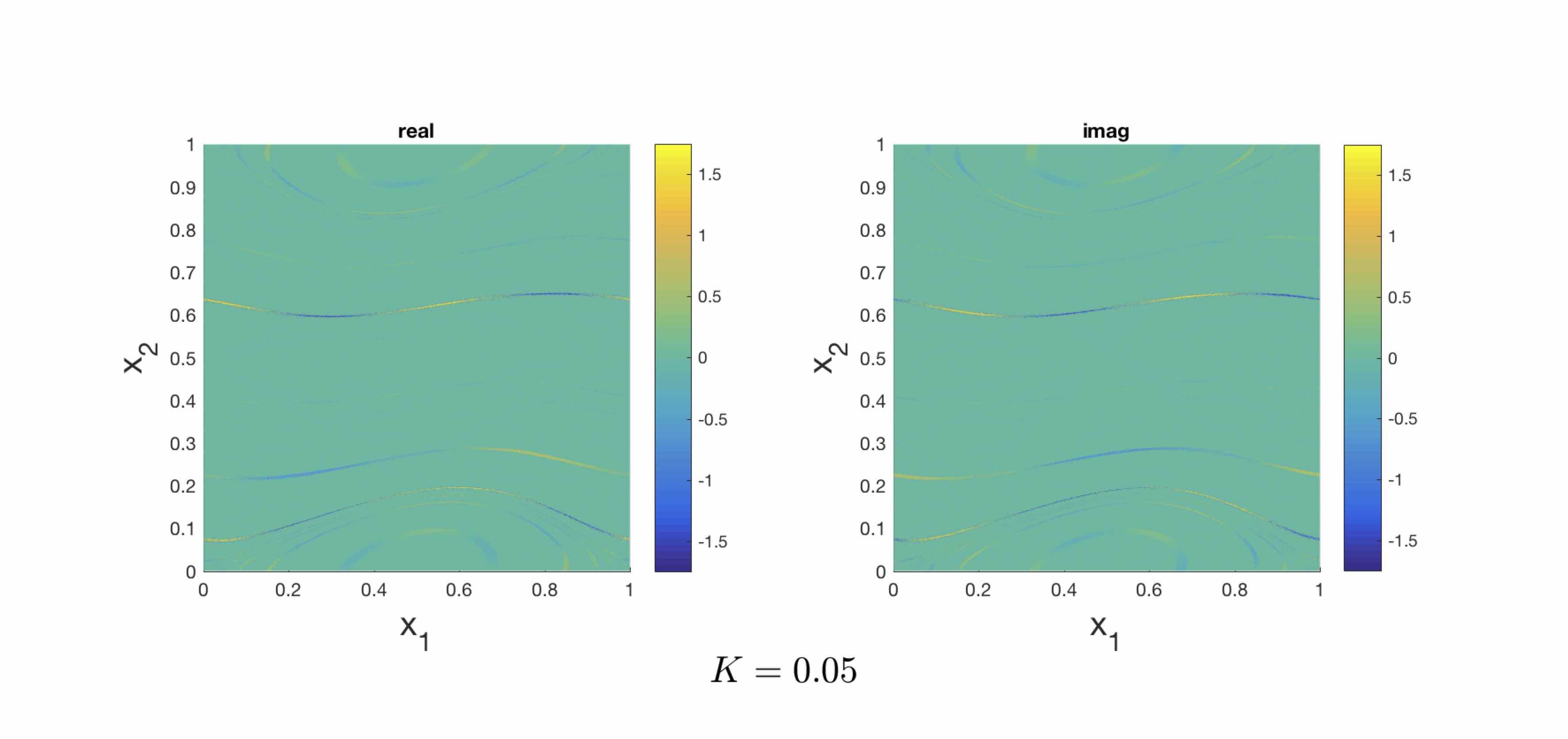} \\
			\includegraphics[width=.45\textwidth]{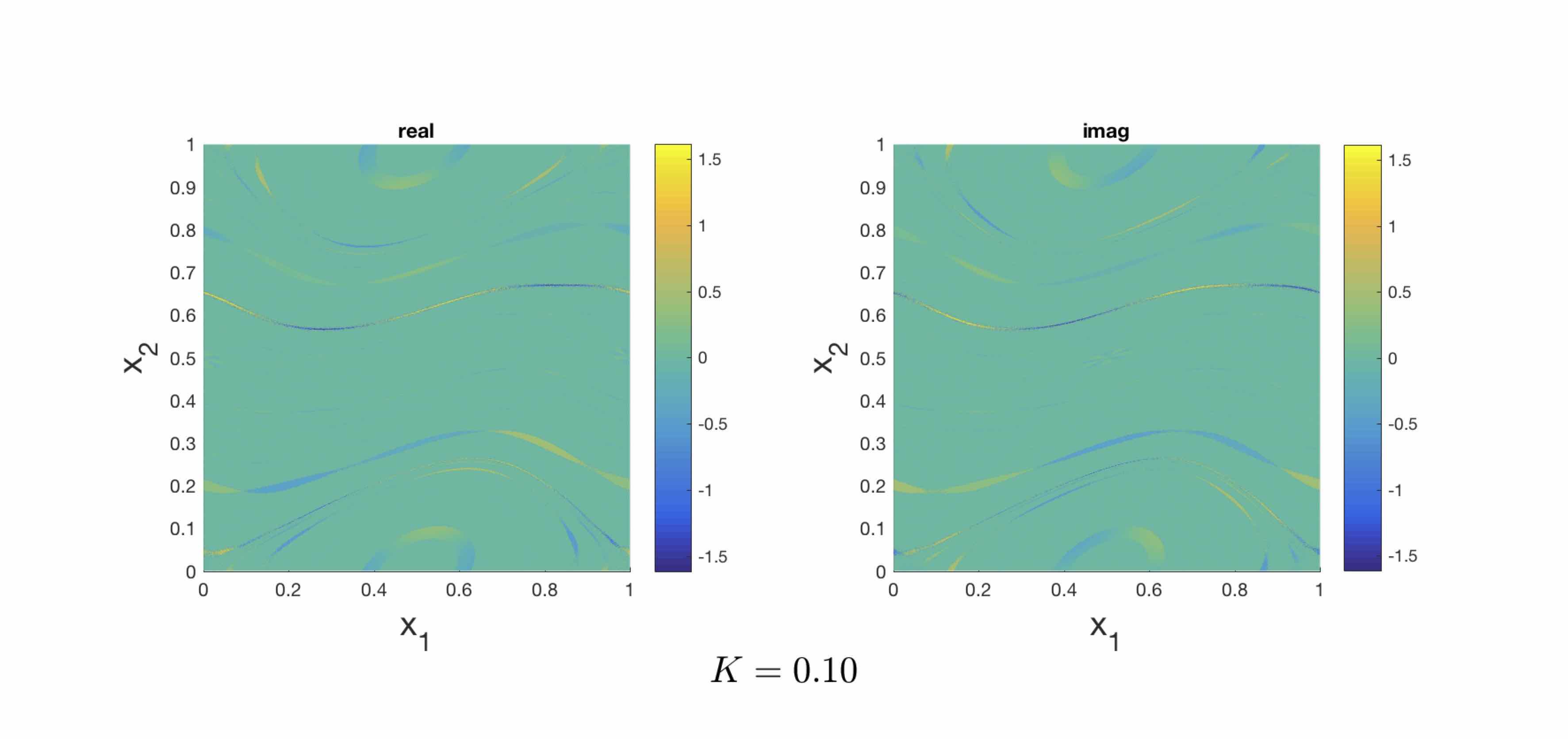} & \includegraphics[width=.45\textwidth]{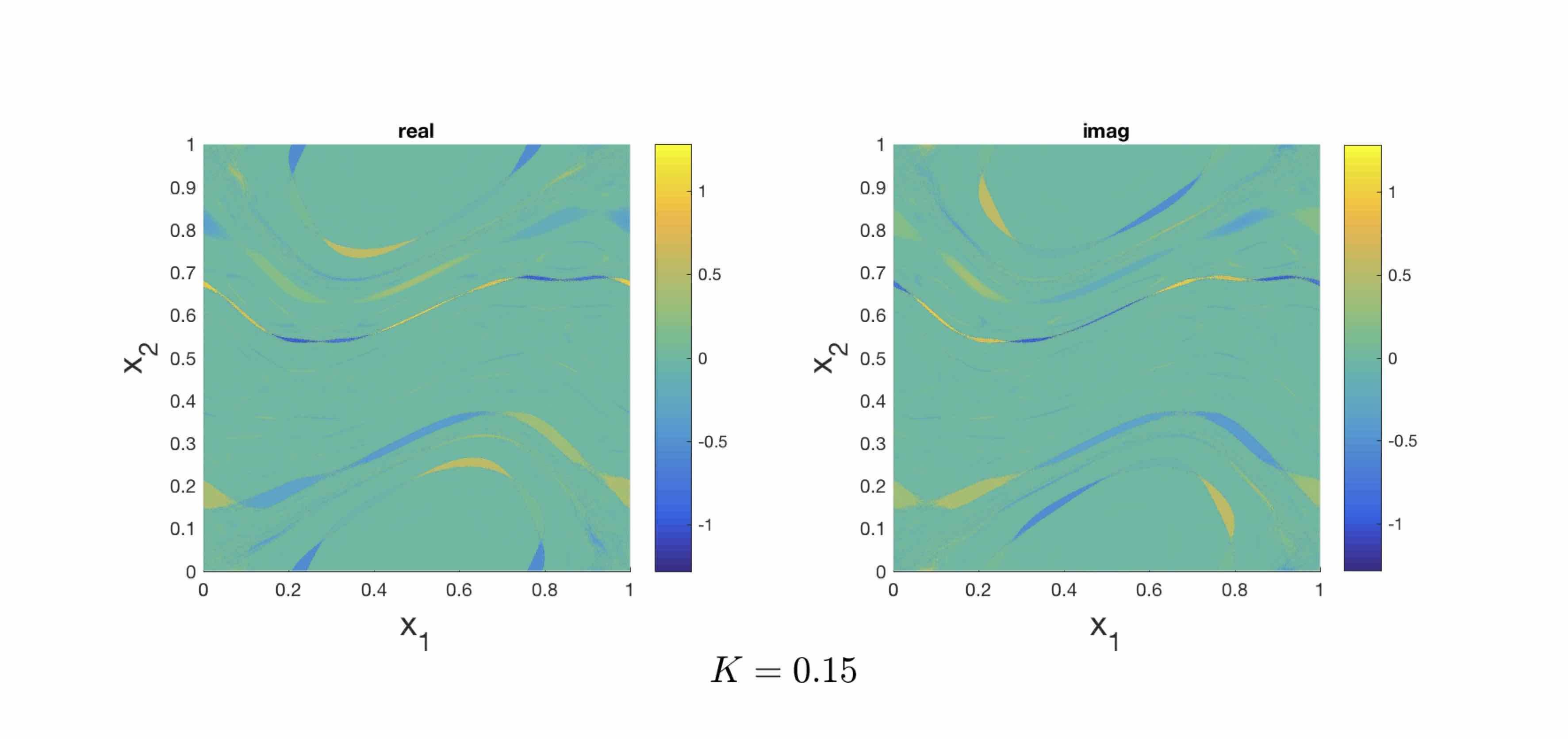}  \\
			\includegraphics[width=.45\textwidth]{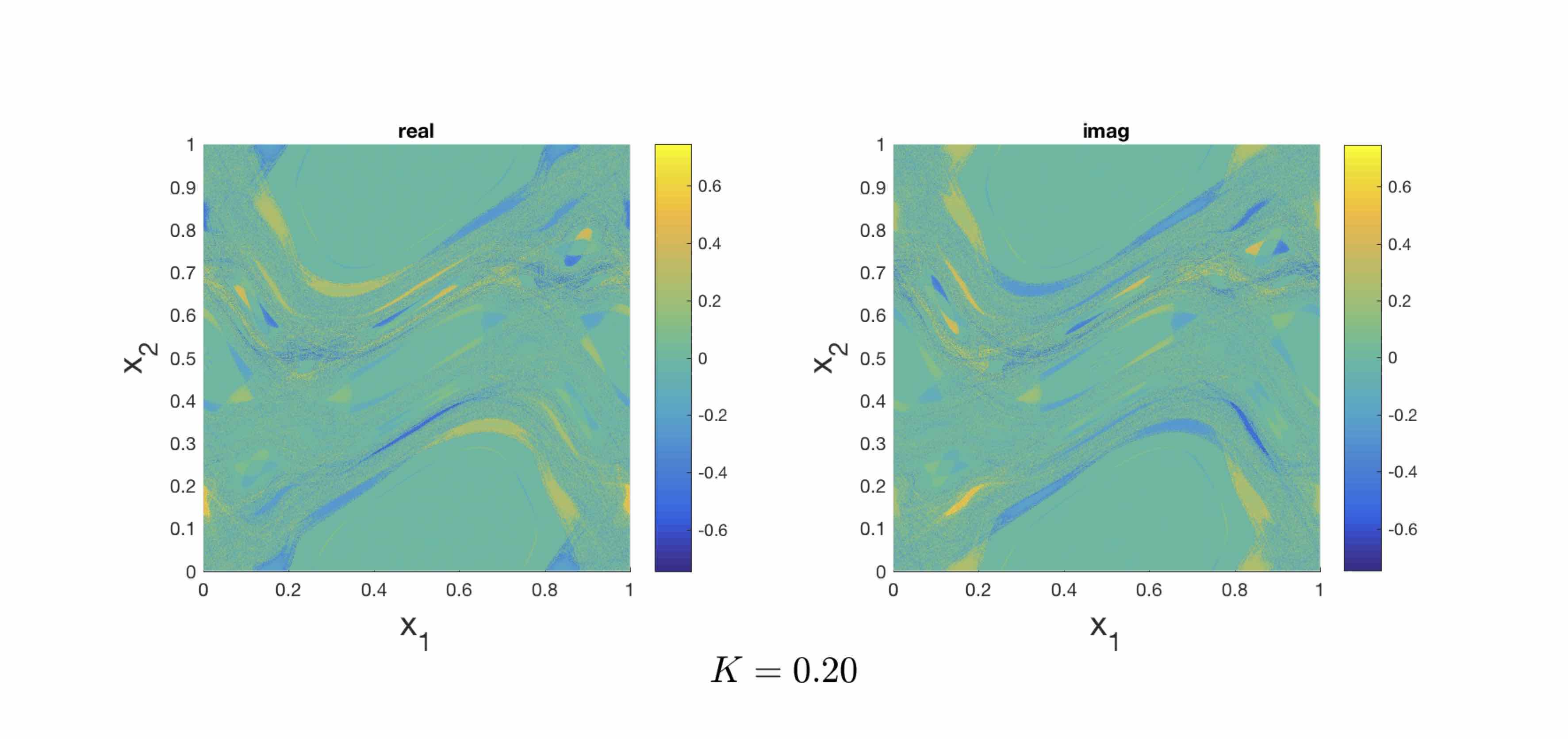}  & \includegraphics[width=.45\textwidth]{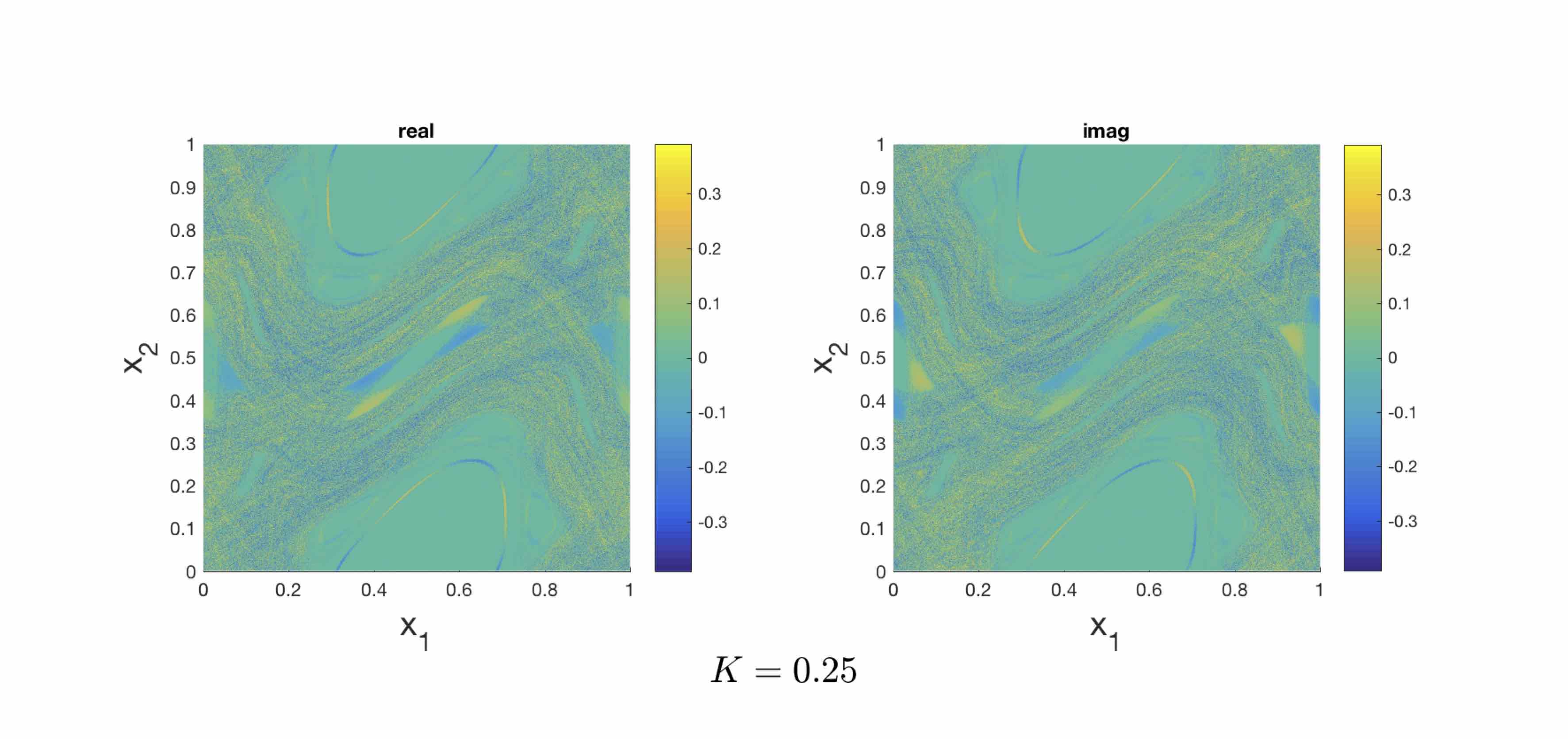} 	  \end{tabular}    
	\end{center}
	\caption{Spectral projections computed for the Chirikov map \eqref{eq:chirikovfamily} at the interval $D=[\pi/2-0.02, \pi/2+0.02]$ with $\tilde{n}=2000$.  The depicted projection approximates the eigenfuncions at $\theta=2\pi/3$, which generates an period-3 partition of the state-space.} \label{fig:4period}
\end{figure}

\section{Conclusions \& future work} \label{sec:conclusions}

In this paper, we introduced a procedure for discretizing the unitary Koopman operator associated with an invertible, measure-preserving transformation on a compact metric space. The method relies on the construction of a periodic approximation of the dynamics, and thereby, approximates the action of the Koopman operator with a permutation. By doing so, one preserves the unitary nature of the underlying operator, which in turn, enables one to approximate the spectral properties of the operator in a weak sense. Here, the phrase ``weak convergence'' implies that approximation of the spectral measure is restricted to only a cyclic subspace, and over a restricted set of test functions (\cref{thm:testfunction}). The results nevertheless do not involve any assumptions on the spectral type, and therefore effectively handles systems with continuous or mixed spectra. The discretization procedure is constructive, and a convergent stable numerical method is formulated to compute the spectral decomposition for Lebesgue measure-preserving transformations on the $m$-torus. The general method involves solving a bipartite matching problem to obtain the periodic approximation. One then effectively traverses the cycles of this periodic approximation to compute the spectral projections and density functions of observables.  Our method is closely related to taking harmonic averages of obervable traces \cite{korda2017data}. In fact, one effectively computes the harmonic averages of the discrete periodic dynamical system using the FFT algorithm. 

One of the major open questions is how these results can be generalized to handle systems that are not necessarily invariant with respect to a Lebesgue absolutely continuous measure (e.g. invariant measures defined on fractal domains which appear in chaotic attractors). It would also be nice to derive some explicit convergence rate results for the current algorithm. Additionaly, it would be interesting to investigate whether a similar discretization procecudure can be followed for dissipative dynamics, in which the periodic approximation is replaced with a many-to-one map. Generalizations of the procedure to flows will be covered in a upcoming paper.

\iffalse
The present method does suffer from the curse of dimensionality, however we believe that sampling ideas may be employed to alleviate some of these complexity issues. For example, in certain cases the spectra can be reconstructed by traversing only a random selection of cycles in the periodic approximation. Furthermore, the necessity of constructing an explicit periodic map can be circumvented by evaluating trajectories at random initial conditions and converting them into cycles once the trajectory arrive within an epsilon bound of the starting point. Future work will be geared towards exploring these ideas in more detail.
\fi

\section*{Acknowledgments}
This project was funded by the Army Research Office (ARO) through grant W911NF-11-1-0511 under the direction of program manager Dr. Samuel Stanton, and an ARO-MURI grant W911NF-17-1-0306 under the direction of program managers Dr. Samuel Stanton and Matthew Munson

\bibliographystyle{amsplain}
\bibliography{references}

\end{document}